\definecolor{darkblue}{rgb}{0,0,0.6}
\title[Metric transforms yielding Gromov hyperbolic spaces]{Metric transforms yielding Gromov hyperbolic spaces}
\author[Dragomir]{George Dragomir}
\address{Department of Mathematics and Statistics, McMaster University, Hamilton, Ontario, Canada L8S 4K1}
\email{dragomir@math.mcmaster.ca}
 \author[Nicas]{Andrew Nicas}
\address{Department of Mathematics and Statistics, McMaster University, Hamilton, Ontario, Canada L8S 4K1}
\email{nicas@mcmaster.ca}
\thanks{The second author was partially supported by a grant from
the Natural Sciences and Engineering Research Council of Canada. The authors would also like to thank the referee for many valuable comments.}
\subjclass[2010]{Primary: 51K05, Secondary: 51F99, 51M10}
\keywords{metric transform, $\delta$-hyperbolic, roughly geodesic, rigidity}
\date{\today}
\newcommand{\IR}{\mathbb{R}}
\newcommand{\bbR}{\mathbbm{R}}
\numberwithin{equation}{section}
\newtheorem{theorem}[equation]{Theorem}
\newtheorem{proposition}[equation]{Proposition}
\newtheorem{lemma}[equation]{Lemma}
\newtheorem{thmA}{Theorem}
\newtheorem{cornono}{Corollary}
\newtheorem{cor+}{Corollary}
\newtheorem{prop+}[cor+]{Proposition}
\theoremstyle{definition}
\newtheorem{example}[equation]{Example}
\newtheorem*{definition}{Definition}
\newtheorem*{corollary}{Corollary}
\newtheorem{remark}[equation]{Remark}
 \newtheoremstyle{TheoremNum}
        {}{}              
        {\itshape}                      
        {}                              
        {\bfseries}                     
        {.}                             
        { }                             
        {\normalfont\bfseries\thmname{#1}\thmnote{#3}}
\theoremstyle{TheoremNum}
\begin{document}

\begin{abstract}
A real valued function $\varphi$ of one variable  is called a metric transform if for every metric space $(X,d)$ the composition $d_\varphi = \varphi\circ d$ is also a metric on $X$. 
We give a complete characterization of the class of approximately nondecreasing, unbounded metric transforms $\varphi$ such that the transformed Euclidean half line $([0,\infty),|\cdot|_\varphi)$ is Gromov hyperbolic. 
As a consequence, we obtain \emph{metric transform rigidity} for roughly geodesic Gromov hyperbolic spaces, that is, if $(X,d)$ is any metric space containing a rough geodesic ray and $\varphi$ is an approximately nondecreasing, unbounded metric transform such that the transformed space $(X,d_\varphi)$ is Gromov hyperbolic and roughly geodesic then $\varphi$ is an approximate dilation and the original space $(X,d)$ is Gromov hyperbolic and roughly geodesic.
\end{abstract}

\maketitle

\baselineskip 15pt

\section{Introduction}\label{intro}

A function $\varphi\colon [0,\infty)\to[0,\infty)$ is called a {\it metric transform} if for each metric space $(X,d)$ the composition $d_\varphi=\varphi\circ d$ is also a metric on $X$.
A metric transform $\varphi$ is necessarily subadditive and satisfies $\varphi^{-1}(0)=\{0\}$.
While  these two conditions on $\varphi$ are not sufficient for it to be a metric transform, if we further require that $\varphi$ is nondecreasing then it is a metric transform.
In particular, any nonconstant, nonnegative concave function $\varphi$  with domain $[0,\infty)$ and satisfying $\varphi(0)=0$ is a metric transform.

A central question concerning metric transforms is whether there exist metric transforms $\varphi$ for which the transformed metric space $(X,d_\varphi)$ has certain specified properties or preserves some of the characteristics of the original metric space $(X,d)$.
Early results about transformed metric spaces dealt with their ``Euclidean" properties.
Blumenthal \cite{Blu:43Rem:aa} showed that if $0<\alpha\le \tfrac{1}{2}$ and $(X,d)$ is  any metric space then the snowflake metric $d^\alpha$ has the property that any four points of $(X,d^\alpha)$ can be isometrically embedded into Euclidean space. 
Wilson \cite{Wil:35On-:aa} showed that the real line with the snowflake metric $|t-s|^{1/2}$ embeds isometrically in 
a real Hilbert space, but cannot embed isometrically in any finite dimensional Euclidean space. 
Remarkable results in this direction were obtained by Schoenberg, independently in \cite{Sch:38Met:ab,Sch:38Met:aa} and, together with von Neumann in \cite{Neu:41Fou:aa}, where they determined all metric transforms for which a transformed Euclidean space isometrically embeds into another Euclidean space.  See \cite[Chapter 9]{Deza-Laurent}) for a discussion and \cite{Le-DRW} for some recent developments.

Our aim is to investigate analogous types of questions in the context of Gromov hyperbolic spaces. 
Recall that if $(X,d)$ is a metric space and $x,y,w\in X$\, then the {\it Gromov product} of $x$ and $y$ with respect to $w$ is defined as \begin{equation*}(x\mid y)_w = \tfrac{1}{2}\left[d(x,w)+d(y,w)-d(x,y)\right].\end{equation*} 
Given $\delta\ge 0$, the metric space $(X,d)$ is said to be {\it $\delta$-hyperbolic} if \[(x\mid y)_w\ge\min\left\{(x\mid z)_w, (y\mid z)_w\right\}-\delta\] for all $x,y,z,w\in X$. A metric space $(X,d)$ is said to be {\it Gromov hyperbolic} if it is $\delta$-hyperbolic for some $\delta\ge0$. 

A basic example of a Gromov hyperbolic metric space is $([0,\infty),|\cdot|)$, the half line with the \hbox{Euclidean} metric. 
In this case, the Gromov product based at $0$ is $(t \, \mid s)_0=\min\{t,s\}$ 
and the space is \hbox{$0$-hyperbolic.} More generally, any $\IR$-tree is $0$-hyperbolic. 
Another well-known example is the hyperbolic plane, which is $\log(3)$-hyperbolic. 
A Euclidean space of dimension greater than $1$ is not Gromov hyperbolic. 
While Gromov hyperbolicity is a quasi-isometry invariant for {\it intrinsic} metric spaces \cite[Theorems 3.18 and 3.20]{Vai:05Gro:aa},
quasi-isometry invariance can fail for non-intrinsic spaces, see \cite[Remark 3.19]{Vai:05Gro:aa} and also \cite[Remarque 13, p.89]{Ghys-dlH}.

We say that a function is {\it approximately nondecreasing} if it is within bounded distance from a nondecreasing function.
Our first result gives a complete characterization of the class of approximately nondecreasing, unbounded metric transforms $\varphi$ such that $([0,\infty),|\cdot|_\varphi)$ is Gromov hyperbolic.
Some additional terminology will be useful.
Recall that a {\it dilation} on $[0,\infty)$ is a function of the form $t \mapsto \lambda t$ where $\lambda$ is a positive constant.
We say that the function $\varphi$ is an {\it approximate dilation} if it is within bounded distance from a dilation.
Furthermore, we say that $\varphi$  is {\it logarithm-like}  if the function
$t ~\mapsto~ \varphi(2t)-\varphi(t)$  is bounded from above.

\begin{thmA}\label{thm:A}
Let $\varphi$ be an approximately nondecreasing, unbounded metric transform.
The \hbox{transformed} metric space $([0,\infty),|\cdot|_\varphi)$ is Gromov hyperbolic if and only if one of the following two mutually \hbox{exclusive} conditions holds:
\begin{itemize}
\item[$(i)$]  $\varphi$ is an approximate dilation, or
\item[$(ii)$] $\varphi$ is logarithm-like.
\end{itemize}
\end{thmA}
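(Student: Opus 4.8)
The plan is to fix the base point $0$ and convert Gromov hyperbolicity into a single inequality comparing a first difference and a second difference of $\varphi$. I would begin with two standard reductions. First, it suffices to verify the four-point inequality at the one base point $w=0$, since verifying it there with constant $\delta$ yields $2\delta$-hyperbolicity. Second, because a bounded perturbation of $\varphi$ alters every Gromov product by a bounded amount, I may assume $\varphi$ is genuinely nondecreasing (not merely approximately so), at the cost of enlarging $\delta$. Writing the four-point condition at $0$ for $0\le x\le y\le z$ in the symmetric form ``the two largest of
\[ S_1=\varphi(x)+\varphi(z-y),\quad S_2=\varphi(y)+\varphi(z-x),\quad S_3=\varphi(z)+\varphi(y-x) \]
differ by at most $2\delta$'', monotonicity gives $S_2\ge S_1$, so with $\Phi:=S_2-S_1\ge 0$ (a first difference) and $\Psi:=S_2-S_3$ (a second, concavity-type difference) the condition becomes: $\min(\Phi,\Psi)\le 2\delta$ whenever $\Psi\ge 0$, together with $-\Psi\le 2\delta$ whenever $\Psi<0$. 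Hyperbolicity is thus exactly the statement that $\Phi$ and $\Psi$ are never simultaneously large, and that $\varphi$ is never too ``convex''. Parametrizing by the gaps $p=x$, $q=y-x$, $r=z-y$ expresses $\Phi$ and $\Psi$ as sums and differences of the increments $\Delta_h(t)=\varphi(t+h)-\varphi(t)$.

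For sufficiency I would treat the two cases separately. If $\varphi$ is an approximate dilation, then $d_\varphi$ lies within bounded distance of a scalar multiple of the Euclidean metric, which is $0$-hyperbolic, and the perturbation principle gives hyperbolicity. If $\varphi$ is logarithm-like, with $\varphi(2t)-\varphi(t)\le M$, the engine is the elementary observation that $\Delta_h(t)\le M$ whenever the width does not exceed the position, $h\le t$ (iterate $\varphi(2t)-\varphi(t)\le M$). A short case analysis on the relative sizes of $q$ and $\{p,r\}$ then bounds $\min(\Phi,\Psi)$ by $2M$ when $\Psi\ge 0$; and when $\Psi<0$ one writes $-\Psi$ in the two forms $\Delta_r(p+q)-\Delta_r(q)$ and $\Delta_p(q+r)-\Delta_p(q)$, observing that $r\le p+q$ or $p\le q+r$ must hold, whence $-\Psi\le M$. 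Thus the four-point defect is at most $2M$.

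The heart of the matter is necessity: hyperbolicity forces $(i)$ or $(ii)$. Here I would introduce the subadditive growth rate $\lambda:=\lim_{t\to\infty}\varphi(t)/t=\inf_{t>0}\varphi(t)/t\ge 0$ (Fekete), so that $\varphi(t)\ge\lambda t$. The first move feeds the sliding configuration $(p,q,r)=(v,w,w)$ into the necessary inequality: there $\Phi=\Delta_w(v)+\bigl(\varphi(2w)-\varphi(w)\bigr)\ge \varphi(2w)-\varphi(w)$ and $\Psi=\bigl(\varphi(2w)-\varphi(w)\bigr)-\Delta_w(v+w)$. Hence if $\varphi(2w)-\varphi(w)>2\delta$ then every width-$w$ increment beyond position $w$ is at least $\varphi(2w)-\varphi(w)-2\delta$, which forces $\lambda\ge(\varphi(2w)-\varphi(w)-2\delta)/w$ and therefore the universal bound $\varphi(2w)-\varphi(w)\le\lambda w+2\delta$. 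When $\lambda=0$ this is precisely logarithm-likeness, giving $(ii)$.

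The remaining, and most delicate, case is $\lambda>0$, where I must rule out logarithmic corrections to the linear term and conclude that $\rho(t):=\varphi(t)-\lambda t\ge 0$ is bounded, i.e. $\varphi$ is an approximate dilation. I would argue by contradiction: if $\rho$ is unbounded I would choose near-record points $s_n\to\infty$ with $\rho(s_n)\to\infty$ and $\rho(s_n)\ge\rho(t)-1$ for $t\le s_n$, fix a gap $b_0>(\delta+1)/\lambda$, and test the symmetric configuration $(p,q,r)=(s_n-b_0,\,b_0,\,s_n-b_0)$. The near-record property keeps $\Phi=2\bigl(\varphi(s_n)-\varphi(s_n-b_0)\bigr)\ge 2\lambda b_0-2>2\delta$, while monotonicity of $\varphi$ together with the bound $\varphi(2s)\le\varphi(s)+\lambda s+2\delta$ from the previous paragraph gives $\varphi(2s_n-b_0)\le\varphi(2s_n)\le\varphi(s_n)+\lambda s_n+2\delta$, so that $\Psi\ge\rho(s_n)-\varphi(b_0)-2\delta\to\infty$. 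Thus $\min(\Phi,\Psi)\to\infty$, contradicting hyperbolicity; hence $\rho$ is bounded. The main obstacle is exactly this step: a single scale only yields an $O(\log t)$ bound on $\varphi(t)-\lambda t$, so one must expose the second difference $\Psi$ through a two-parameter family and exploit the monotonicity of $\varphi$ (rather than of $\rho$, which need not be monotone) to drive $\Psi$ to infinity. Finally, $(i)$ and $(ii)$ are mutually exclusive for unbounded $\varphi$, since an approximate dilation with $\lambda>0$ has $\varphi(2t)-\varphi(t)\to\infty$.
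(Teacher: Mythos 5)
Your proposal is correct, and it reaches Theorem A by a genuinely different route than the paper. The paper's strategy is to first show (via the four-point inequality with $w=0$, $z=x+y$) that a hyperbolic, approximately nondecreasing transform is approximately midpoint-concave, then invoke the stability theorems of Ng--Nikodem and Hyers--Ulam to replace $\varphi$ by a nearby continuous concave function, and finally classify concave transforms using one-sided derivatives, Azagra's $C^1$ concave approximation, the implicit function theorem for the equalizing point $\omega(x,y)$ of two Gromov products, and its limit $\widehat{\omega}(x)$ as $y\to\infty$; the dichotomy then comes from $\lambda=\lim\varphi'_-$. You instead never pass through concavity: you encode hyperbolicity at the base point $0$ as the elementary condition on $\Phi=S_2-S_1$ and $\Psi=S_2-S_3$, extract $\lambda$ from Fekete's subadditive lemma rather than from a derivative, obtain the key estimate $\varphi(2w)-\varphi(w)\le\lambda w+2\delta$ from the sliding configuration $(v,w,w)$, and kill the $\lambda>0$, $\rho$ unbounded case with a near-record-point argument on the symmetric configuration $(s_n-b_0,\,b_0,\,s_n-b_0)$. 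I checked the details: the reduction to a single base point, the identity $-\Psi=\Delta_r(p+q)-\Delta_r(q)=\Delta_p(q+r)-\Delta_p(q)$ and the dichotomy $r\le p+q$ or $p\le q+r$, the local boundedness of $\rho$ needed for $s_n\to\infty$ (which follows from subadditivity), and the estimates $\Phi\ge 2\lambda b_0-2$ and $\Psi\ge\rho(s_n)-\varphi(b_0)-2\delta$ all hold. Your argument is more elementary and self-contained (no $C^1$ approximation, no implicit function theorem, no approximate-convexity stability theorems); what the paper's route buys in exchange is the standalone classification of concave metric transforms (its Theorem on $\mathcal{C}$) and the explicit decomposition $\varphi=\lambda x+f$ with $f$ concave and bounded, plus the approximate-ultrametric form of the logarithm-like case, which is what gets reused in Theorem B. If you intend your argument to also feed into Theorem B you would want to note that your sufficiency computation in the logarithm-like case in fact yields the stronger $\delta$-ultrametric inequality, exactly as in the paper's Proposition on $\eta$-nondecreasing logarithm-like transforms.
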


It is straightforward to show that if $\varphi$ is a metric transform and also an approximate dilation then $\varphi$ preserves Gromov hyperbolicity, that is, if $(X,d)$ is any Gromov hyperbolic space then $(X,d_{\varphi})$ is also Gromov hyperbolic (Proposition~\ref{prop:appr_dilation}). 
 
 The function $t \mapsto \log(1+t)$ is a metric transform and logarithm-like (as defined above), indeed the inspiration for the terminology ``logarithm-like''.
 Gromov observed that if $(X,d)$ is {\it any} metric space then $(X,\,\log(1+d))$ is Gromov hyperbolic (\cite[Example 1.2(c)]{Gro:87Hyp:aa}).
 More generally, we show that if an approximately nondecreasing metric transform $\varphi$ is logarithm-like then the transformed space $(X, d_{\varphi})$ is ``approximately ultrametric'' and hence Gromov hyperbolic  (Proposition~\ref{prop:appr_log-like}).
We say that a metric space $(X,d)$ is {\it approximately ultrametric} if there exists $\delta \geq 0$ such that for all $x, y, z \in X$ the inequality $d(x,y) \leq  \max\left\{ d(x,z),  d(z,y)\right\} + \delta$ is satisfied. 
An unbounded, approximately ultrametric space fails to have the rough midpoint property and so is never a rough geodesic metric space (Proposition~\ref{prop:appr_ultra_non_geo}).

A {\it rough geodesic ray} in a metric space $(X,d)$ is a rough isometric embedding of the Euclidean half line in $X$,
that is, a function $\gamma\colon [0,\infty)\to X$ and a constant $k\ge 0$ such that for all $t,s\ge 0$, $|t-s|-k\le d(\gamma(t),\gamma(s))\le |t-s|+k$.

Theorem~\ref{thm:A} has the following consequence.

\begin{thmA}\label{thm:B}
Let $(X,d)$ be a metric space containing a rough geodesic ray.
Let $\varphi$ be an approximately nondecreasing, unbounded metric transform.
If  the transformed space $(X,d_\varphi)$ is Gromov hyperbolic then
\begin{itemize}
\item[$(i)$] $(X,d)$ is Gromov hyperbolic and $\varphi$ is an approximate dilation, or
\item[$(ii)$] $(X,d_\varphi)$ is approximately ultrametric. \\
{Conditions (i) and (ii) are mutually exclusive.}                    
\end{itemize}
\end{thmA}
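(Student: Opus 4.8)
The plan is to use the rough geodesic ray to transport the Gromov hyperbolicity of $(X,d_\varphi)$ onto the transformed half line $([0,\infty),|\cdot|_\varphi)$ and then invoke \Cref{thm:A}. Let $\gamma\colon[0,\infty)\to X$ be a rough geodesic ray with constant $k$, and let $\psi$ be a nondecreasing function with $|\varphi-\psi|\le C$. The first and main step is to show that $\gamma$ realizes $([0,\infty),|\cdot|_\varphi)$ as a rough isometric copy of a subset of $(X,d_\varphi)$; concretely, I would establish the estimate $\bigl|\varphi(d(\gamma(t),\gamma(s)))-\varphi(|t-s|)\bigr|\le K$ for all $t,s\ge 0$, where $K=\varphi(k)+2C$. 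Writing $a=d(\gamma(t),\gamma(s))$ and $b=|t-s|$, so that $|a-b|\le k$, this follows by playing the approximate monotonicity of $\varphi$ against its subadditivity: when $a\le b$ one has $\varphi(a)\le\psi(b)+C\le\varphi(b)+2C$, while when $a>b$ one uses $a\le b+k$ together with subadditivity to get $\varphi(a)\le\psi(b+k)+C\le\varphi(b)+\varphi(k)+2C$, and the reverse inequality is symmetric. This is the crux of the argument: the subadditivity of a metric transform is exactly what absorbs the additive error $k$ of the rough geodesic ray, even when $\varphi$ is far from linear.

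Granting this estimate, each Gromov product $(\gamma(t)\mid\gamma(s))_{\gamma(w)}$ computed in $(X,d_\varphi)$ differs from the corresponding product $(t\mid s)_w$ in $([0,\infty),|\cdot|_\varphi)$ by at most $\tfrac{3}{2}K$. Since $\delta$-hyperbolicity is inherited by arbitrary subsets and involves only four points at a time, feeding $\gamma(t),\gamma(s),\gamma(u),\gamma(w)$ into the $\delta$-inequality for $(X,d_\varphi)$ and translating through these bounds shows that $([0,\infty),|\cdot|_\varphi)$ is $(3K+\delta)$-hyperbolic. At this point \Cref{thm:A} applies and dichotomizes: either $\varphi$ is an approximate dilation, or $\varphi$ is logarithm-like.

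In the logarithm-like case I would simply invoke \Cref{prop:appr_log-like}, which already gives that $(X,d_\varphi)$ is approximately ultrametric, yielding conclusion $(ii)$. In the approximate dilation case, say $|\varphi(t)-\lambda t|\le M$, the identity map satisfies $|d_\varphi(x,y)-\lambda\,d(x,y)|\le M$ for all $x,y$, so the same Gromov product comparison as above, now between $(X,d_\varphi)$ and $(X,\lambda d)$, shows that $(X,\lambda d)$ is Gromov hyperbolic; since rescaling the metric by $\lambda$ merely rescales the Gromov products and hence $\delta$, the space $(X,d)$ is Gromov hyperbolic, giving conclusion $(i)$.

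Finally, for mutual exclusivity I would argue directly that the rough geodesic ray is incompatible with approximate ultrametricity at large scale. If $(i)$ and $(ii)$ held simultaneously then $\varphi$ would be an approximate dilation and $(X,d_\varphi)$ approximately ultrametric; evaluating the approximate ultrametric inequality on the three points $\gamma(0),\gamma(T),\gamma(2T)$ and using $d_\varphi\approx\lambda d$ together with the rough geodesic estimates $d(\gamma(0),\gamma(2T))\ge 2T-k$ and $d(\gamma(0),\gamma(T)),\,d(\gamma(T),\gamma(2T))\le T+k$ bounds $2\lambda T$ above by $\lambda T$ plus a constant independent of $T$, which is absurd for large $T$. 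This is the same mechanism underlying \Cref{prop:appr_ultra_non_geo}. I expect the rough isometric embedding estimate of the first step to be the only genuinely delicate point; everything after it is a routine transfer of $\delta$-hyperbolicity across maps that distort distances by a bounded additive amount.
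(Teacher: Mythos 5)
Your proposal is correct and follows essentially the same route as the paper: your key distortion estimate $|\varphi(d(\gamma(t),\gamma(s)))-\varphi(|t-s|)|\le \varphi(k)+2C$ is precisely the paper's Lemma~\ref{lem:appr_nondecr_k-rough_ray} (approximate monotonicity plus subadditivity absorbing the additive error $k$), the transfer of hyperbolicity along the resulting rough isometric embedding is Lemma~\ref{lem:k-rough_iso}, and the dichotomy then comes from Theorem~\ref{thm:appr_nondecr_mainA} exactly as in the paper. Your mutual-exclusivity argument via the three points $\gamma(0),\gamma(T),\gamma(2T)$ is a slightly more hands-on version of the paper's observation that an approximate dilation would force $(X,d)$ itself to be approximately ultrametric, contradicting the presence of a rough geodesic ray, but it is the same mechanism.
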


Since an unbounded, approximately ultrametric space is never roughly geodesic, Theorem~\ref{thm:B} \linebreak 
immediately yields the following corollary which can be viewed as a type of  rigidity with respect to metric transformation of roughly geodesic Gromov hyperbolic spaces.

\begin{cornono}[Metric Transform Rigidity]
Let $(X,d)$ be a metric space containing a rough geodesic ray.
Let $\varphi$ be an approximately nondecreasing, unbounded metric transform.
If  the transformed space $(X,d_\varphi)$ is Gromov hyperbolic and roughly geodesic then $\varphi$ is an approximate dilation and $(X,d)$ is Gromov hyperbolic and roughly geodesic.
\end{cornono}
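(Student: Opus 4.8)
The plan is to derive the corollary directly from Theorem~\ref{thm:B} by invoking the additional hypothesis that $(X,d_\varphi)$ is roughly geodesic in order to eliminate alternative $(ii)$. Since $(X,d)$ contains a rough geodesic ray and $\varphi$ is an approximately nondecreasing, unbounded metric transform with $(X,d_\varphi)$ Gromov hyperbolic, the hypotheses of Theorem~\ref{thm:B} are met, so exactly one of $(i)$ or $(ii)$ holds. It therefore suffices to rule out $(ii)$ and then to upgrade the conclusion of $(i)$ from ``$(X,d)$ is Gromov hyperbolic'' to ``$(X,d)$ is Gromov hyperbolic and roughly geodesic.''

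To rule out $(ii)$, I would first check that $(X,d_\varphi)$ is unbounded. A rough geodesic ray $\gamma\colon[0,\infty)\to X$ satisfies $d(\gamma(t),\gamma(0))\ge t-k$, so $\sup_t d(\gamma(t),\gamma(0))=\infty$; because $\varphi$ is approximately nondecreasing and unbounded, $d_\varphi(\gamma(t),\gamma(0))=\varphi\bigl(d(\gamma(t),\gamma(0))\bigr)$ is unbounded in $t$, whence $(X,d_\varphi)$ is unbounded. If alternative $(ii)$ held, then $(X,d_\varphi)$ would be an unbounded approximately ultrametric space, and by Proposition~\ref{prop:appr_ultra_non_geo} such a space is never roughly geodesic, contradicting the hypothesis. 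Hence $(ii)$ fails, alternative $(i)$ holds, and we conclude that $\varphi$ is an approximate dilation and that $(X,d)$ is Gromov hyperbolic.

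It remains to show $(X,d)$ is roughly geodesic. Since $\varphi$ is an approximate dilation, I would fix $\lambda>0$ and $C\ge 0$ with $|\varphi(t)-\lambda t|\le C$ for all $t\ge 0$, so that $|d_\varphi(x,y)-\lambda\,d(x,y)|\le C$ for all $x,y\in X$. Given $x,y\in X$, take a rough geodesic segment $\sigma$ joining $x$ to $y$ in $(X,d_\varphi)$ and reparametrize it at scale $\lambda$ by setting $\tilde\sigma(u)=\sigma(\lambda u)$. Combining the rough-geodesic estimate for $\sigma$ in $d_\varphi$ with the inequality $|d_\varphi-\lambda d|\le C$ and dividing through by $\lambda$ shows that $\tilde\sigma$ is a rough geodesic segment for $d$, with constants depending only on $\lambda$, $C$, and the rough-geodesic constant of $\sigma$. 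Producing such segments between arbitrary pairs of points shows that $(X,d)$ is roughly geodesic.

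The only step with genuine content beyond restating Theorem~\ref{thm:B} is the transfer of rough geodesicity in the last paragraph, and the anticipated (mild) obstacle is bookkeeping the additive constants through the rescaling $t\mapsto\lambda t$ and confirming that an approximate dilation indeed yields $|d_\varphi-\lambda d|\le C$ uniformly. Verifying unboundedness of $d_\varphi$ for a merely approximately nondecreasing $\varphi$ is routine but should be stated explicitly, since it is precisely what licenses the appeal to Proposition~\ref{prop:appr_ultra_non_geo}.
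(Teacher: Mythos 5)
Your proposal is correct and follows essentially the same route as the paper: rule out the logarithm-like/approximately-ultrametric alternative via Proposition~\ref{prop:appr_ultra_non_geo}, then transfer Gromov hyperbolicity and rough geodesicity back to $(X,d)$ through the approximate dilation (the paper does this last step by citing rough similarity, you by an explicit rescaling, which amounts to the same thing). Your explicit verification that $(X,d_\varphi)$ is unbounded is a small point the paper leaves implicit, and it is indeed needed to invoke Proposition~\ref{prop:appr_ultra_non_geo}.
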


This paper is organized as follows. In Section~\ref{sec:metric_transforms} we recall  some of the relevant properties of metric transforms and concave functions.
In Section~\ref{sec:Gromov_hyperbolic}, after reviewing some useful facts concerning Gromov hyperbolic spaces, we introduce approximately ultrametric spaces and discuss some of their immediate properties.
In Section~\ref{sec:conc_mt} we give a complete characterization of all concave functions that transform the Euclidean half line into a Gromov hyperbolic space (Theorem~\ref{thm:conc_mainA}).
We extend this result to the case of approximately nondecreasing,  unbounded metric transforms in Section~\ref{sec:app_conc_mt}, where we prove Theorem~\ref{thm:A}.
The proof of Theorem~\ref{thm:B} and  its application to  roughly geodesic Gromov hyperbolic spaces is given in Section~\ref{sec:proof_thmB}.


\section{Metric Transforms and Approximately Concave Functions}\label{sec:metric_transforms}
We summarize some properties of metric transforms, concave functions and approximately concave functions that will be needed in the sequel.

\subsection{Metric transforms}
\label{subsec:mt}

General treatments of metric transforms can be found in
\cite{Cor:99Int:aa,Deza-Laurent}. 
Translation invariant distances on the real line are studied in \cite{Le-D}.

\begin{definition}
A function $\varphi\colon [0,\infty)\to [0,\infty)$ is said to be a \emph{metric transform} if for every metric space $(X,d)$ the space $(X,d_\varphi)$ with $d_\varphi(x,y) = \varphi(d(x,y))$ is again a metric space. We denote by $\mathcal{M}$ the class of all metric transforms.
\end{definition}

For any $\varphi\in\mathcal{M}$, since $d_\varphi(x,y)=0$ if and only if $x=y$, we have that $\varphi(t)=0$ if and only if $t=0$.
Hence, a necessary condition for a function $\varphi\colon [0,\infty)\to[0,\infty)$ to be a metric transform is that $\varphi^{-1}(0)=\{0\}$. 

A complete, albeit somewhat tautological, characterization of the elements of $\mathcal{M}$ can be given as follows. A triplet $(a,b,c)$ of nonnegative real numbers is called a {\it triangle triplet} if $a\le b+c,$  $b\le a+c \mbox{ and } c\le a+b.$ 

\begin{proposition}[{\cite[2.6]{Cor:99Int:aa}}]\label{prop:mt-charact}
Assume $\varphi\colon [0,\infty)\to[0,\infty)$ satisfies $\varphi^{-1}(0)=\{0\}$. Then $\varphi$ is a metric transform if and only if $(\varphi(a),\varphi(b),\varphi(c))$ is a triangle triple whenever $(a,b,c)$ is one. \qed
\end{proposition}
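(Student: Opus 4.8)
The plan is to observe that the only metric axiom for $d_\varphi$ which is not automatic is the triangle inequality, and that the triangle inequality for three points translates exactly into the triangle triplet condition for the $\varphi$-values of their pairwise distances. Both implications then follow from this translation, the forward direction requiring in addition that every triangle triplet actually arises as the pairwise distances of three points in some metric space.

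For the ``if'' direction, I would fix an arbitrary metric space $(X,d)$ and check the metric axioms for $d_\varphi=\varphi\circ d$. Nonnegativity is immediate since $\varphi$ takes values in $[0,\infty)$, and symmetry follows from symmetry of $d$. The hypothesis $\varphi^{-1}(0)=\{0\}$ gives $d_\varphi(x,y)=0$ if and only if $d(x,y)=0$, that is, if and only if $x=y$. For the triangle inequality, given $x,y,z\in X$ I would set $a=d(x,z)$, $b=d(x,y)$, $c=d(y,z)$; since $d$ is a metric, $(a,b,c)$ is a triangle triplet, so by hypothesis $(\varphi(a),\varphi(b),\varphi(c))$ is one as well, and in particular $\varphi(a)\le\varphi(b)+\varphi(c)$, which is precisely $d_\varphi(x,z)\le d_\varphi(x,y)+d_\varphi(y,z)$. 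Hence $d_\varphi$ is a metric and $\varphi\in\mathcal{M}$.

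For the ``only if'' direction, I would start from an arbitrary triangle triplet $(a,b,c)$ and realize it as the distances of three points. When $a,b,c$ are all positive, take $X=\{x,y,z\}$ to be a three-point set and define $d(x,y)=b$, $d(y,z)=c$, $d(x,z)=a$ (extended symmetrically, with $d$ vanishing on the diagonal). The three triangle triplet inequalities for $(a,b,c)$ are exactly the three triangle inequalities needed for $d$ to be a metric on $X$. Since $\varphi$ is a metric transform, $d_\varphi$ is a metric, and its three triangle inequalities assert precisely that $(\varphi(a),\varphi(b),\varphi(c))$ is a triangle triplet.

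The one point requiring care, and the only real obstacle, is the degenerate case in which some entry of $(a,b,c)$ vanishes, so that the three-point construction collapses. Here I would argue directly: if, say, $a=0$, the conditions $b\le a+c$ and $c\le a+b$ force $b=c$, whence $(\varphi(a),\varphi(b),\varphi(c))=(0,\varphi(b),\varphi(b))$, which is trivially a triangle triplet; and if two entries vanish the third must vanish as well, giving $(0,0,0)$. Thus the degenerate triplets need no metric space at all, and combining them with the positive case completes the proof.
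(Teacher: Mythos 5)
Your proof is correct and complete. The paper itself gives no argument for this proposition---it simply cites \cite[2.6]{Cor:99Int:aa}---so there is nothing internal to compare against; your write-up supplies exactly the standard proof one finds in that reference: the translation of the triangle inequality into the triangle-triplet condition for the ``if'' direction, and the realization of a triplet as the side lengths of a three-point metric space for the ``only if'' direction. You were also right to flag and handle the degenerate case where an entry of $(a,b,c)$ vanishes, since there the three-point construction would violate the separation axiom; your direct verification that such triplets map to triangle triplets closes that gap cleanly.
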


Proposition~\ref{prop:mt-charact} implies the following properties of metric transforms.

\begin{proposition}\label{prop:mt-properties}
Assume $\varphi\in \mathcal{M}$. Then
\begin{itemize}
\item[$(i)$] $\varphi$ is subadditive, that is, $\varphi(t+s)\le\varphi(t)+\varphi(s)$, for all $t,s\ge0$,
\item[$(ii)$] $|\varphi(t)-\varphi(s)|\le \varphi(|t-s|)$, for all $t,s\ge0.$\qed
\end{itemize}
\end{proposition}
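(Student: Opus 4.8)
The plan is to derive both properties directly from the triangle triplet characterization in Proposition~\ref{prop:mt-charact} by feeding it suitably chosen triplets; the only work is to verify that the triplets we choose are genuine triangle triplets, which is elementary.

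For part $(i)$, I would fix $t,s\ge 0$ and consider the triplet $(t+s,\,t,\,s)$. Its three triangle inequalities read $t+s\le t+s$, $\ t\le (t+s)+s$, and $s\le (t+s)+t$, all of which hold trivially, so $(t+s,t,s)$ is a triangle triplet. Proposition~\ref{prop:mt-charact} then guarantees that $(\varphi(t+s),\varphi(t),\varphi(s))$ is again a triangle triplet, and the first of its three defining inequalities is precisely $\varphi(t+s)\le\varphi(t)+\varphi(s)$, which is subadditivity.

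For part $(ii)$, by symmetry in $t$ and $s$ it suffices to prove $\varphi(t)-\varphi(s)\le\varphi(t-s)$ under the assumption $t\ge s\ge 0$, since interchanging $t$ and $s$ then yields $\varphi(s)-\varphi(t)\le\varphi(|t-s|)$, and the two estimates combine to $|\varphi(t)-\varphi(s)|\le\varphi(|t-s|)$. With $t\ge s$ I would consider the triplet $(t,\,s,\,t-s)$. Its triangle inequalities are $t\le s+(t-s)$ (equality), $s\le t+(t-s)$ (equivalent to $s\le t$), and $t-s\le t+s$ (equivalent to $0\le s$), so it is a triangle triplet. Applying Proposition~\ref{prop:mt-charact} shows that $(\varphi(t),\varphi(s),\varphi(t-s))$ is a triangle triplet, and its first inequality $\varphi(t)\le\varphi(s)+\varphi(t-s)$ rearranges to $\varphi(t)-\varphi(s)\le\varphi(t-s)=\varphi(|t-s|)$, as desired.

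There is no genuine obstacle here: both statements follow immediately from the characterization once the right triplets are identified, and the only thing to confirm is the (routine) nonnegativity and triangle inequalities for those triplets. The sole mild subtlety is to notice that one should test $(t+s,t,s)$ for subadditivity and $(t,s,t-s)$ with $t\ge s$ for the reverse-triangle estimate, rather than any more symmetric choice.
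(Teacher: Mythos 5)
Your proposal is correct and is exactly the argument the paper intends: the paper gives no written proof, simply asserting that the proposition follows from the triangle-triplet characterization (Proposition~\ref{prop:mt-charact}), and your choices of triplets $(t+s,t,s)$ and $(t,s,t-s)$ with $t\ge s$ are the standard way to extract subadditivity and the reverse-triangle estimate from it. Nothing is missing.
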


While subadditivity and $\varphi^{-1}(0)=\{0\}$ are necessary conditions for a function $\varphi\colon [0,\infty)\to[0,\infty)$ to be a metric transform, these conditions are, in general, not sufficient (see  Example~\ref{ex:non_mt}).
However, if  $\varphi$ is also nondecreasing then it follows  from Proposition~\ref{prop:mt-charact} that $\varphi\in\mathcal{M}$.
We summarize this as follows.

\begin{proposition}[{\cite[2.3]{Cor:99Int:aa}}]\label{prop:subadd+inc=mt}
Assume $\varphi\colon [0,\infty)\to[0,\infty)$ with $\varphi^{-1}(0)=\{0\}$ is subadditive and nondecreasing.
Then $\varphi$ is a metric transform.  \qed
\end{proposition}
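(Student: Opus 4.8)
The plan is to prove Proposition~\ref{prop:subadd+inc=mt} by invoking the characterization in Proposition~\ref{prop:mt-charact}. Since $\varphi^{-1}(0)=\{0\}$ is assumed, it suffices to verify that $\varphi$ carries triangle triplets to triangle triplets: given a triangle triplet $(a,b,c)$ of nonnegative reals, I must show that $(\varphi(a),\varphi(b),\varphi(c))$ is again a triangle triplet. By the symmetry of the three defining inequalities, it is enough to establish one of them, say $\varphi(a)\le\varphi(b)+\varphi(c)$, with the other two following by relabeling.

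First I would use the hypothesis that $(a,b,c)$ is a triangle triplet, which gives in particular $a\le b+c$. Since $\varphi$ is nondecreasing, applying $\varphi$ to both sides yields $\varphi(a)\le\varphi(b+c)$. Then, by subadditivity (which is part of the hypothesis), $\varphi(b+c)\le\varphi(b)+\varphi(c)$. Chaining these two inequalities gives exactly $\varphi(a)\le\varphi(b)+\varphi(c)$, the desired triangle-triplet inequality.

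Running the same two-step argument (monotonicity followed by subadditivity) for the inequalities $b\le a+c$ and $c\le a+b$ produces $\varphi(b)\le\varphi(a)+\varphi(c)$ and $\varphi(c)\le\varphi(a)+\varphi(b)$. Thus $(\varphi(a),\varphi(b),\varphi(c))$ satisfies all three triangle-triplet inequalities, and by Proposition~\ref{prop:mt-charact} together with the assumed condition $\varphi^{-1}(0)=\{0\}$, we conclude $\varphi\in\mathcal{M}$.

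There is essentially no hard step here: the argument is a direct combination of the two hypotheses through Proposition~\ref{prop:mt-charact}, and the only thing to be mildly careful about is confirming that monotonicity and subadditivity compose in the right order (monotonicity is applied first to pass from $a$ to $b+c$, then subadditivity splits $b+c$). The result is immediate once one recognizes that the triangle-triplet characterization reduces the whole claim to verifying a single chain of two elementary inequalities.
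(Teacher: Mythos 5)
Your proof is correct and follows exactly the route the paper indicates: it reduces the claim to the triangle-triplet characterization of Proposition~\ref{prop:mt-charact} and verifies each inequality by applying monotonicity to $a\le b+c$ and then subadditivity to $\varphi(b+c)$. This matches the paper's (cited, one-line) justification, so nothing further is needed.
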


\begin{example}\label{ex:non_mt}
Let $\varphi\colon [0,\infty)\to[0,\infty)$ be given by $ \varphi(t)=at+b|\sin(t)|$ with $a>0$ and $b\ge0$.
Then $\varphi^{-1}(0)=\{0\}$, and the subadditivity of $\varphi$ follows from $|\sin(t+s)|=|\sin(t)\cos(s)+\sin(s)\cos(t)|\le |\sin(t)|+|\sin(s)|.$  
Note that if $a\ge b$\, then $\varphi$ is nondecreasing and, by Proposition~\ref{prop:subadd+inc=mt}, $\varphi$ is a metric transform.
Also note that $\varphi$ is not concave unless $b=0$. 
If $a<b$\, then $\varphi$ is not monotonic and not a metric transform.
\end{example}

\begin{remark}
In general, metric transforms need not be continuous. It follows from  part $(ii)$ of Proposition~\ref{prop:mt-properties} that if $\varphi\in \mathcal{M}$ is continuous at $0$ from the right then $\varphi$ is continuous on $[0,\infty)$. Furthermore, a transformed space $(X,d_\varphi)$ is topologically equivalent to the original space $(X,d)$ if and only if $\varphi$ is continuous.
The metric topology on $(X,d_\varphi)$ is discrete for every metric $d$\, if and only if  $\varphi$ is discontinuous at $0$ (see {\cite[3.1]{Cor:99Int:aa}}). 
Similarly, the differentiability of a metric transform is influenced by its behaviour near $0$. 
For any metric transform $\varphi$, the right derivative $\varphi'_+(0)$ exists in the extended sense
 (we allow infinite values)
and if $\varphi_+'(0)<\infty$\, then $\varphi$ is $\varphi_+'(0)$-Lipschitz on $(0,\infty)$ and therefore differentiable  except possibly at countably many points (see  {\cite[4.7]{Cor:99Int:aa}}).
\end{remark}

Throughout this paper, unless otherwise specified, metric transforms are not assumed to be continuous. 


\subsection{Concave functions}\label{subsec:concave}

In this subsection,
after a very brief review of some basic properties of concave functions,
we summarize some results concerning continuous concave functions $\varphi\colon[0,\infty) \to [0,\infty)$ satisfying $\varphi(0)=0$ that will be used in Section~\ref{sec:conc_mt}.

 Let $\varphi\colon I\to\IR$ be defined on some interval $I\subseteq\IR$, that is, a connected subset of $\IR$.
 The function $\varphi$ is \emph{concave} if for all $x,y\in I$ and all $t\in[0,1]$, \begin{equation*}  (1-t)\varphi(x)+t\varphi(y)\le \varphi((1-t)x+ty).\end{equation*}
Reversing the above inequality gives the definition of a \emph{convex} function.
Hence, $\varphi$ is concave if and only if $-\varphi$ is convex.

Convex functions have been extensively studied and many of their properties are well known. 
We recall some properties of concave functions that we need, omitting the proofs as these can be found, for instance, in \cite[Chapter I]{Rob:73Con:aa}.  

By definition, a function $\varphi$ is concave if and only if any portion of its graph lies on or above the chord connecting the end points of this portion of the graph.
Alternatively, $\varphi$ is concave if and only if any of the following inequalities
\[
\frac{\varphi(z)-\varphi(x)}{z-x} \ge \frac{\varphi(y)-\varphi(x)}{y-x}\ge \frac{\varphi(y)-\varphi(z)}{y-z}
\]
 hold for all $x<z<y$ (see \cite[Sec. I.10 (2)]{Rob:73Con:aa}).

The following elementary properties  of concave functions ({see \cite[Theorems I.10.A, B and C]{Rob:73Con:aa}}) will be useful.

\begin{proposition}\label{prop:conc_diff}
Assume $\varphi\colon [0,\infty)\to \IR$ is a concave function.
Then $\varphi$ satisfies a Lipschitz \hbox{condition} on any compact interval contained in $(0,\infty)$ and is therefore  continuous on $(0,\infty)$. The left derivative $\varphi'_-$ and the right derivative $\varphi'_+$ of $\varphi$ exist at every point in $(0,\infty)$ and \[\varphi'_-(x)\ge\varphi'_+(x)\ge \frac{\varphi(y)-\varphi(x)}{y-x}\ge \varphi'_-(y)\ge \varphi'_+(y)\] for all $0<x<y$.
Hence both one-sided derivatives are nonincreasing on $(0,\infty)$ and, at each point, the left derivative is no smaller than the right derivative.
In particular, $\varphi$ is differentiable on $(0,\infty)$ except possibly at countably many points. \qed
\end{proposition}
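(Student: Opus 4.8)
The plan is to derive every assertion from the chord-slope monotonicity displayed immediately before the statement, namely that for $x<z<y$ the slope of a chord of the graph of $\varphi$ decreases as either endpoint is moved to the right. The organizing idea is to fix a basepoint $c\in(0,\infty)$ and regard the difference quotient $v\mapsto \frac{\varphi(v)-\varphi(c)}{v-c}$ as a monotone function of the free endpoint $v$: as $v$ decreases toward $c$ from the right this quotient is nondecreasing, and as $v$ increases toward $c$ from the left it is nonincreasing.

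First I would establish the existence and finiteness of the one-sided derivatives. Fixing $c$ together with auxiliary points $u<c<w$ in $(0,\infty)$, the slope inequalities show that the right difference quotients at $c$ are bounded above by the finite number $\frac{\varphi(c)-\varphi(u)}{c-u}$ and are monotone in $v$; hence $\varphi'_+(c)=\lim_{v\downarrow c}\frac{\varphi(v)-\varphi(c)}{v-c}$ exists and is finite, being the supremum of these quotients. The symmetric argument, with the left quotients bounded below by $\frac{\varphi(w)-\varphi(c)}{w-c}$, gives the existence and finiteness of $\varphi'_-(c)$ as the infimum of the left quotients. The four inequalities in the displayed chain then follow by passing to the limit: for $0<x<y$ I would take $u\uparrow x$ and $v\downarrow x$ in $\frac{\varphi(x)-\varphi(u)}{x-u}\ge\frac{\varphi(v)-\varphi(x)}{v-x}$ to get $\varphi'_-(x)\ge\varphi'_+(x)$; evaluate the supremum defining $\varphi'_+(x)$ at the particular endpoint $v=y$ to get $\varphi'_+(x)\ge\frac{\varphi(y)-\varphi(x)}{y-x}$; evaluate the infimum defining $\varphi'_-(y)$ at the particular endpoint $u=x$ to get $\frac{\varphi(y)-\varphi(x)}{y-x}\ge\varphi'_-(y)$; and reuse the first inequality at $y$ for the final step. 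Monotonicity of both one-sided derivatives is then immediate, since $\varphi'_+(x)\ge\varphi'_-(y)\ge\varphi'_+(y)$ and $\varphi'_-(x)\ge\varphi'_+(x)\ge\varphi'_-(y)$ whenever $x<y$. For the Lipschitz claim on a compact $[a,b]\subset(0,\infty)$, I would pick $0<a'<a\le b<b'$ and sandwich every chord slope over $[a,b]$ between the finite numbers $\varphi'_-(b')$ and $\varphi'_+(a')$, which yields a uniform Lipschitz constant and hence continuity on $(0,\infty)$.

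The final assertion, differentiability off a countable set, I would obtain from the fact that a monotone function has at most countably many discontinuities. Applying this to the nonincreasing function $\varphi'_+$, it suffices to prove that $\varphi$ is differentiable at each continuity point of $\varphi'_+$. The inequality chain with $y=c$ gives $\varphi'_+(x)\ge\varphi'_-(c)\ge\varphi'_+(c)$ for every $x<c$, so letting $x\uparrow c$ traps $\varphi'_-(c)$ between the left-hand limit of $\varphi'_+$ at $c$ and $\varphi'_+(c)$; at a continuity point these two coincide, forcing $\varphi'_-(c)=\varphi'_+(c)$ and hence differentiability there. The step requiring the most care is this last one: correctly identifying the non-differentiability set of $\varphi$ with the countable discontinuity set of the monotone function $\varphi'_+$, and verifying the limiting relation between $\varphi'_-$ and the one-sided limits of $\varphi'_+$. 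The earlier steps are routine limit manipulations once the slope monotonicity is in hand.
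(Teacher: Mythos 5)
Your proof is correct; the paper itself omits the argument, simply citing Theorems I.10.A--C of Roberts--Varberg, and your chord-slope derivation is precisely the standard proof found there (existence of finite one-sided derivatives from monotone bounded difference quotients, the displayed inequality chain by passing to limits, Lipschitz bounds from slopes at outer points $a'<a\le b<b'$, and differentiability off the countable discontinuity set of the nonincreasing function $\varphi'_+$). No gaps.
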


The next two corollaries are direct consequences of Proposition~\ref{prop:conc_diff}. 
The first is a version of the Fundamental Theorem of Calculus (see Remark I.12.B in \cite{Rob:73Con:aa}), and the second uses the fact that any local maximum of a concave function is also a global maximum. 
Together with the assumption that $\varphi$ is nonnegative, this implies that if $\varphi$ is unbounded then it must  be increasing.  

\begin{corollary}\label{cor:conc_FT}
If $\varphi\colon [0,\infty)\to\IR$ is a continuous concave function then for any $x,y \ge 0$,
\[
 \qquad  \qquad \qquad \qquad \qquad \qquad  \varphi(y)-\varphi(x) = \int_x^y\varphi'_-(t)dt = \int_x^y\varphi'_+(t)dt. \qquad \qquad \qquad \qquad \qquad \qquad  \qed  
\]  
\end{corollary}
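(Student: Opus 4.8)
The plan is to first establish the identity on the open half line $(0,\infty)$, where Proposition~\ref{prop:conc_diff} guarantees that $\varphi$ is locally Lipschitz and that the one-sided derivatives are nonincreasing, and then to recover the endpoint $x=0$ (or $y=0$) by a continuity argument. Since the roles of $x$ and $y$ are symmetric and the case $x=y$ is trivial, I may assume $x<y$.

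First I would fix $0<x<y$ and exploit the chord inequalities recorded just before Proposition~\ref{prop:conc_diff}, which give for any $0<a<b$ that $\varphi'_+(a)\ge \frac{\varphi(b)-\varphi(a)}{b-a}\ge \varphi'_-(b)$. Applying this to the subintervals of a partition $x=t_0<t_1<\cdots<t_n=y$ and summing telescopically yields
\[
\sum_{i=0}^{n-1}\varphi'_-(t_{i+1})(t_{i+1}-t_i)\ \le\ \varphi(y)-\varphi(x)\ \le\ \sum_{i=0}^{n-1}\varphi'_+(t_i)(t_{i+1}-t_i).
\]
Because $\varphi'_-$ and $\varphi'_+$ are nonincreasing (Proposition~\ref{prop:conc_diff}), the left-hand sum is a lower Riemann sum for $\varphi'_-$ (right endpoints of a nonincreasing integrand) and the right-hand sum is an upper Riemann sum for $\varphi'_+$ (left endpoints); since monotone functions are Riemann integrable on $[x,y]$, letting the mesh tend to zero gives $\int_x^y\varphi'_-(t)\,dt\le \varphi(y)-\varphi(x)\le \int_x^y\varphi'_+(t)\,dt$. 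Finally, since $\varphi'_-=\varphi'_+$ except at the countably many points where $\varphi$ fails to be differentiable, the two integrands coincide off a set of measure zero and hence have equal Riemann integrals; the sandwich then forces $\varphi(y)-\varphi(x)=\int_x^y\varphi'_-(t)\,dt=\int_x^y\varphi'_+(t)\,dt$.

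To pass to the endpoint, suppose $x=0<y$. For each $\epsilon\in(0,y)$ the case already proved gives $\varphi(y)-\varphi(\epsilon)=\int_\epsilon^y\varphi'_\pm(t)\,dt$. Letting $\epsilon\to 0^+$, the left-hand side tends to $\varphi(y)-\varphi(0)$ by continuity of $\varphi$ at $0$, and the right-hand side tends to the (possibly improper) integral $\int_0^y\varphi'_\pm(t)\,dt$; this simultaneously shows that the integral converges and equals $\varphi(y)-\varphi(0)$, as required.

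The main obstacle is precisely this behaviour near $0$: since $\varphi'_+$ is nonincreasing it may be unbounded as $t\to 0^+$ (for instance $\varphi(t)=\sqrt{t}$ has $\varphi'_+(0^+)=\infty$), so the integral over $[0,y]$ is genuinely improper and the Fundamental Theorem of Calculus cannot be invoked directly at the endpoint. Everything else is routine: the identification of the two integrals rests only on the fact that a countable set has measure zero, and the squeeze on $(0,\infty)$ uses nothing beyond monotonicity of the one-sided derivatives together with the elementary concavity inequalities.
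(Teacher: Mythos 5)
Your proof is correct. The paper does not actually prove this corollary---it is stated as a direct consequence of Proposition~\ref{prop:conc_diff} with a pointer to Remark I.12.B of \cite{Rob:73Con:aa}---and your telescoping Riemann-sum argument is precisely the standard derivation from the chord inequalities of that proposition: the sums you write are the upper Darboux sum of the nonincreasing function $\varphi'_+$ and the lower Darboux sum of $\varphi'_-$, the two integrals agree because the one-sided derivatives differ only on a countable (hence null) set, and you correctly isolate the one genuinely delicate point, namely that at the endpoint $0$ the integral may be improper (as for $\varphi(t)=\sqrt{t}$) and must be recovered by letting $\epsilon\to 0^+$ and using continuity of $\varphi$ at $0$.
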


\begin{corollary}\label{cor:phi_global}
Let $\varphi\colon [0,\infty)\to[0,\infty)$ be a concave function such that $\varphi(0)=0$ and $\varphi$ is not constant on $(0,\infty)$. Then either of the following holds:
\begin{itemize}
\item[$(i)$] $\varphi$ is strictly increasing, or
\item[$(ii)$] there exists $a> 0$ such that $\varphi$ is strictly increasing on $[0,a)$ and  constant on $[a,\infty)$.
 \qed
\end{itemize}
\end{corollary}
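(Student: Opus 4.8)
The plan is to first establish that $\varphi$ is nondecreasing using only concavity and nonnegativity, and then to split into the two listed alternatives according to whether the monotonicity is strict everywhere or eventually fails; the one hypothesis-dependent point is ensuring the strictly increasing portion is nondegenerate.

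First I would show $\varphi$ is nondecreasing. By Proposition~\ref{prop:conc_diff}, the right derivative $\varphi'_+$ exists and is nonincreasing on $(0,\infty)$. If $\varphi'_+(x_0) < 0$ for some $x_0 > 0$, then $\varphi'_+(t) \le \varphi'_+(x_0) < 0$ for all $t \ge x_0$, and Corollary~\ref{cor:conc_FT} gives $\varphi(y) = \varphi(x_0) + \int_{x_0}^y \varphi'_+(t)\,dt \le \varphi(x_0) + \varphi'_+(x_0)(y - x_0) \to -\infty$ as $y \to \infty$, contradicting $\varphi \ge 0$. Hence $\varphi'_+ \ge 0$ on $(0,\infty)$ and $\varphi$ is nondecreasing; this is precisely the observation that a nonnegative concave function cannot attain an interior local (hence global) maximum and still escape to a decreasing regime.

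Next I would treat the dichotomy. If $\varphi'_+(t) > 0$ for every $t > 0$, then Corollary~\ref{cor:conc_FT} yields $\varphi(y) - \varphi(x) = \int_x^y \varphi'_+(t)\,dt > 0$ for all $0 \le x < y$, so $\varphi$ is strictly increasing and we are in case $(i)$. Otherwise the set $Z = \{t > 0 : \varphi'_+(t) = 0\}$ is nonempty; since $\varphi'_+$ is nonnegative and nonincreasing, it stays zero once it vanishes, so with $a = \inf Z$ one has $\varphi'_+(t) = 0$ for all $t > a$ and $\varphi'_+(t) > 0$ for all $0 < t < a$. Corollary~\ref{cor:conc_FT} then makes $\varphi$ constant on $[a,\infty)$ and strictly increasing on $(0,a)$, which combined with $\varphi(0)=0 \le \varphi(t)$ gives strict monotonicity on $[0,a)$, placing us in case $(ii)$.

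The step I expect to be the crux, and the only one that uses the hypothesis that $\varphi$ is not constant on $(0,\infty)$, is showing $a > 0$ so that $[0,a)$ is a genuine (nondegenerate) interval: if $a = 0$ then $\varphi'_+ \equiv 0$ on $(0,\infty)$, forcing $\varphi$ to be constant on $(0,\infty)$ by Corollary~\ref{cor:conc_FT}, contrary to hypothesis. The two alternatives are mutually exclusive because a function that is constant on $[a,\infty)$ cannot be strictly increasing on all of $[0,\infty)$. A minor technical subtlety to handle with care is the endpoint $0$: a concave function need only be continuous on $(0,\infty)$, so the assertion of strict increase on the half-open interval $[0,a)$ should be understood as combining the prescribed value $\varphi(0)=0$ with strict increase on $(0,a)$, the latter following from $\varphi \ge 0$ and the strict positivity of $\varphi'_+$ there.
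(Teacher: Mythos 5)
Your proof is correct and follows essentially the route the paper intends: the paper omits a written proof, remarking only that the corollary is a direct consequence of Proposition~\ref{prop:conc_diff} via the fact that a local maximum of a concave function is global, so that nonnegativity forces $\varphi$ to be nondecreasing --- which is exactly your first step, with the dichotomy then read off from where the nonincreasing, nonnegative right derivative first vanishes. Your closing remarks correctly dispose of the only delicate points (that $a>0$ uses nonconstancy, and that behaviour at the possibly discontinuous endpoint $0$ is controlled by $\varphi(0)=0\le\varphi$), so nothing is missing.
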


\begin{remark}\label{rem:phi'_lim}
Since the one-sided derivatives are nonincreasing and $\varphi'_-(x)\ge \varphi'_+(x)\ge 0$ for all $x>0$, there exists $\lambda\ge 0$ such that 
$$ \lim_{x\to\infty}\varphi'_-(x) = \lim_{x\to\infty}\varphi'_+(x)=\lambda.$$
\end{remark}

Another important property of concave functions defined on a possibly infinite open interval $I\subseteq \IR$ is that they can be globally approximated by concave functions which are real analytic on $I$.
Azagra showed  (\cite[Theorem 1.1]{Aza:13Glo:aa}) that for every convex function $f\colon U\to \IR$ defined on an open convex subset $U\subseteq\IR^n$, $n\ge 1$, and every $\varepsilon>0$, there exists a real analytic convex function $g\colon U\to\IR$ such that $f-\varepsilon\le g\le f$.
We will only be interested in uniform approximations by functions which are of class $C^1$ and
so the following weaker version of Azagra's theorem for concave functions is sufficient for our purpose.

\begin{proposition}[{\cite[Theorem 1.1]{Aza:13Glo:aa}}]\label{prop:conc_approx_open}
Let $U\subseteq \IR$ be an open connected subset and let $\varphi\colon U\to\IR$  be a concave function. For every $\varepsilon>0$ there exists a $C^1$ concave function $\sigma\colon U\to\IR$ such that $\varphi\le \sigma\le \varphi+\varepsilon$. \qed
\end{proposition}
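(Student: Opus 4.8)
The plan is to give a direct one-dimensional argument rather than invoke the general convex-approximation machinery. Observe first that adding a constant preserves both concavity and the property of being $C^1$, so it suffices to produce a $C^1$ concave function $\tau\colon U\to\IR$ with $\sup_{U}|\tau-\varphi|\le \varepsilon/2$ and then set $\sigma=\tau+\varepsilon/2$; this gives $\varphi\le\sigma\le\varphi+\varepsilon$ at once. Writing $U=(\alpha,\beta)$ with $\alpha,\beta$ possibly infinite, Proposition~\ref{prop:conc_diff} shows that $p:=\varphi'_+$ is a nonincreasing function on $U$ whose only discontinuities are jumps, occurring at the at most countably many points $c$ where $\varphi'_-(c)>\varphi'_+(c)$; moreover $\varphi$ is already $C^1$ precisely when $p$ is continuous. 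Thus the entire difficulty is concentrated at the jump set of $p$.

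The construction is to smooth $p$ into a continuous nonincreasing function. Fix a basepoint $x_0\in U$ and set
\[
\tau(x)=\varphi(x_0)+\int_{x_0}^{x}q(t)\,dt,
\]
where $q\colon U\to\IR$ is continuous and nonincreasing. Then $\tau$ is $C^1$ with $\tau'=q$, hence concave because $q$ is nonincreasing, and Corollary~\ref{cor:conc_FT} gives $\tau(x)-\varphi(x)=\int_{x_0}^{x}\bigl(q(t)-p(t)\bigr)\,dt$. I would define $q$ to agree with $p$ outside a family of pairwise disjoint ``patches'' $(c-w_c,c+w_c)\subset U$, one around each jump point $c$, letting $q$ descend continuously and monotonically from $p(c-w_c)$ to $p(c+w_c)$ across each patch. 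Since $p$ is nonincreasing these endpoint values sandwich the jump, so such a monotone continuous interpolation exists; choosing the endpoints $c\pm w_c$ at continuity points of $p$ (possible as the jumps are countable) guarantees that $q$ matches $p$ at the patch boundaries and is therefore globally continuous and nonincreasing.

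The crux, and the step I expect to be the main obstacle, is controlling the running integral $\int_{x_0}^{x}(q-p)$ uniformly over the whole of the possibly unbounded interval $U$, the difficulty being that the jumps of $p$ may be infinite in number and may accumulate, including toward $\partial U$. Two devices would handle this. First, shrink each width $w_c$ so that the patches are pairwise disjoint, contained in $U$, and have endpoints at continuity points of $p$; a summably small choice also tames accumulation of patches near any limit point of the jump set. Second, arrange each interpolation to have vanishing net deviation $\int_{c-w_c}^{c+w_c}(q-p)=0$, which is possible because the target value $\int_{c-w_c}^{c+w_c}p$ lies strictly between the extreme integrals $2w_c\,p(c+w_c)$ and $2w_c\,p(c-w_c)$ of monotone profiles with the prescribed endpoint values. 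With net-zero patches the running integral returns to $0$ after each completed patch, so at any point it is governed solely by the single patch in progress, whose contribution is bounded by a fixed multiple of $\Delta_c w_c$ with $\Delta_c=\varphi'_-(c)-\varphi'_+(c)$; choosing the widths so that these bounds are all $\le\varepsilon/2$ yields $\sup_U|\tau-\varphi|\le\varepsilon/2$, which completes the argument. The genuinely delicate bookkeeping lies in simultaneously guaranteeing disjointness, the global continuity and monotonicity of $q$ across accumulation points of the jump set, and this uniform running-integral bound over all of $U$.
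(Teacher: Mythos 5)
The paper does not actually prove this statement: it is quoted directly from Azagra's theorem on global approximation of convex functions and stamped with a \qed, so a correct self-contained one-dimensional argument would be a genuine addition rather than a rederivation. Your overall strategy --- replace $p=\varphi'_+$ by a continuous nonincreasing surrogate $q$ and integrate, keeping the running integral of $q-p$ uniformly small --- is sound, and the individual ingredients you verify are correct: $\int p$ over a patch lies strictly between the two extreme integrals of monotone profiles with the prescribed endpoint values, so a net-zero interpolation exists; net-zero patches reduce the running integral at any point to the contribution of the single patch in progress; and the finiteness of $\varphi'_{\pm}$ at every interior point of $U$ lets each patch's contribution be made as small as desired by shrinking its width.

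The gap is in the decomposition itself: one cannot, in general, place one patch around \emph{each} jump point of $p$ so that the patches are pairwise disjoint. The jump set of the nonincreasing function $\varphi'_+$ is countable but can accumulate at one of its own points, and can even be dense in a subinterval of $U$ (take $\varphi(x)=\lambda x-\sum_n a_n(x-q_n)^+$ with $(q_n)$ an enumeration of the rationals in $U$ and $\sum_n a_n<\infty$). In such cases every interval around a given jump point contains infinitely many other jump points, so the scheme ``pairwise disjoint patches, one per jump, with $q=p$ outside the patches'' is unachievable, and your remark that a summably small choice of widths tames accumulation does not address this obstruction. The construction is repairable, but only by changing the decomposition: cover a neighbourhood of the jump set (or all of $U$) by countably many disjoint open intervals $(a_i,b_i)$ whose endpoints are continuity points of $p$ and whose lengths shrink fast enough that $(b_i-a_i)\bigl(p(a_i)-p(b_i)\bigr)\le\varepsilon/2$ --- note this forces the lengths to tend to $0$ near $\partial U$, where $p$ may be unbounded --- and run your net-zero monotone interpolation on each such interval, absorbing all the jumps it contains at once. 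With that modification the argument goes through; as written, the patching step fails.
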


If $\varphi\colon [0,\infty)\to[0,\infty)$ is a continuous concave function such that $\varphi(0)=0$ then Proposition~\ref{prop:conc_approx_open} provides $C^1$ concave approximations of $\varphi$ on the open interval $(0,\infty)$. 
To obtain an approximation of $\varphi$ by a concave function which is continuous on $[0,\infty)$ and of class $C^1$ on $(0,\infty)$, we wish to extend $\varphi$ to a continuous concave function defined at the left of $0$.
Observe that this is not possible if the right derivative $\varphi'_+(0)$ is undefined, but as the proof of the next result shows, we can slightly modify the function $\varphi$ near zero in order for such a continuous concave extension to exist. 
The following corollary plays a key role in the proof of Proposition~\ref{prop:w_hat}.
\begin{corollary}\label{cor:conc_approx}
Let $\varphi\colon [0,\infty)\to[0,\infty)$ be a continuous concave function such that $\varphi(0)=0$. For every $\varepsilon>0$ there exists a continuous concave function $\psi\colon [0,\infty)\to[0,\infty)$ that 
 is of class $C^1$ on $(0,\infty)$, satisfies $\psi(0)=0$ and $|\varphi-\psi|\le\varepsilon$. 
\end{corollary}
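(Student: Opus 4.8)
The plan is to apply Proposition~\ref{prop:conc_approx_open} directly to the restriction of $\varphi$ to the \emph{open} interval $(0,\infty)$ and then repair the value at the origin by subtracting a constant. Fix $\eta=\varepsilon/2$. Since $\varphi$ is concave on $[0,\infty)$, its restriction to $(0,\infty)$ is concave, so Proposition~\ref{prop:conc_approx_open} furnishes a $C^1$ concave function $\sigma\colon(0,\infty)\to\IR$ with $\varphi\le\sigma\le\varphi+\eta$ on $(0,\infty)$. The only defects of $\sigma$ are that it is a priori defined only on $(0,\infty)$ and that its boundary behaviour at $0$ is uncontrolled; I would fix both at once.

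First I would check that $\sigma$ is nondecreasing. Its derivative $\sigma'$ is nonincreasing because $\sigma$ is concave; were $\sigma'(t_0)<0$ at some $t_0$, then $\sigma'(t)\le\sigma'(t_0)<0$ for all $t\ge t_0$ would force $\sigma(t)\to-\infty$ as $t\to\infty$, contradicting $\sigma\ge\varphi\ge 0$. Hence $\sigma'\ge 0$ and $\sigma$ is nondecreasing, so $L:=\lim_{t\to 0^+}\sigma(t)=\inf_{t>0}\sigma(t)$ exists; moreover $L\in[0,\eta]$, since $0\le\varphi\le\sigma\le\varphi+\eta$ and $\varphi(t)\to\varphi(0)=0$. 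Extending $\sigma$ to $[0,\infty)$ by $\sigma(0)=L$ keeps it continuous and concave (the defining inequality for a triple having $0$ as its left endpoint follows by letting the left point tend to $0^+$ in the concavity inequality on $(0,\infty)$). Now set $\psi=\sigma-L$ on $[0,\infty)$. Then $\psi(0)=0$; $\psi$ is concave and of class $C^1$ on $(0,\infty)$, as subtracting a constant preserves both; $\psi\ge 0$ because $\sigma\ge L$; and for every $t\ge 0$ one has $|\varphi(t)-\psi(t)|\le|\varphi(t)-\sigma(t)|+L\le\eta+\eta=\varepsilon$. Thus $\psi$ has all the required properties.

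The obstacle anticipated by the remark preceding the statement is that $\varphi'_+(0)$ may be infinite (as for $\varphi(t)=\sqrt{t}$), which prevents extending $\varphi$ to a concave function on an open interval straddling $0$ and approximating there. My route sidesteps this entirely: approximating only on $(0,\infty)$ and recovering the endpoint value through monotonicity requires no extension to the left, and the resulting $\psi$ is permitted to inherit an infinite right derivative at $0$, which is harmless since only $C^1$ smoothness on $(0,\infty)$ is demanded. The one step deserving genuine care is the monotonicity argument securing the limit $L$ and the nonnegativity of $\psi$; once that is in place the remaining verifications are routine. Alternatively, one could follow the route foreshadowed by the remark: first replace $\varphi$ on a short interval $[0,a]$ by the chord from $(0,0)$ to $(a,\varphi(a))$, which stays concave because $\varphi(a)/a\ge\varphi'_+(a)$ and makes the slope at $0$ finite, then extend linearly to the left, apply Proposition~\ref{prop:conc_approx_open} on an open interval containing $0$, and finally normalize the value at $0$.
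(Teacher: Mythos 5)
Your proof is correct, but it takes a genuinely different route from the paper's. The paper pre-processes $\varphi$: it replaces the graph over $[0,a]$ (with $a=\varphi^{-1}(\varepsilon/2)$) by the supporting line of slope $\varphi'_+(a)$, which extends $\varphi$ to a concave function $\widetilde{\varphi}_\varepsilon$ on all of $\IR$ within $\varepsilon/2$ of $\varphi$ on $[0,\infty)$; it then applies Proposition~\ref{prop:conc_approx_open} on the open set $\IR$ and normalizes at $0$. This is exactly the "alternative" you sketch in your last sentence, and it needs the auxiliary facts that $\varphi$ is increasing near $0$ (Corollary~\ref{cor:phi_global}) and that the supporting line dominates $\varphi$. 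You instead post-process the approximant: apply Proposition~\ref{prop:conc_approx_open} directly on $(0,\infty)$, then recover the endpoint by observing that $\sigma$ must be nondecreasing (its nonincreasing derivative cannot ever be negative, else $\sigma\to-\infty$, contradicting $\sigma\ge\varphi\ge 0$), so $L=\lim_{t\to 0^+}\sigma(t)=\inf_{t>0}\sigma(t)$ exists and lies in $[0,\varepsilon/2]$, and $\psi=\sigma-L$ works. Your monotonicity argument, the concavity of the extension by $\sigma(0)=L$ (obtained by letting the left endpoint tend to $0^+$ in the concavity inequality), and the final estimate $|\varphi-\psi|\le \varepsilon/2+L\le\varepsilon$ are all sound, and your observation that only $C^1$ regularity on the open half line is demanded correctly explains why no control of $\psi'_+(0)$ is needed. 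Your version is slightly more economical in that it avoids the tangent-line surgery and the appeal to Corollary~\ref{cor:phi_global}; the paper's version buys a $\psi$ that is additionally $C^1$ from the right at $0$ (being the restriction of a $C^1$ function on $\IR$), a feature not required by the statement.
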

\begin{proof}
Observe that, by concavity, if $a>0$ and $\varphi'_+(a)\le m \le \varphi'_-(a)$\, then $\varphi(x)\le\varphi(a)+m(x-a)$ for all $x\ge 0$,
that is, the graph of $\varphi$ lies on or under the line with slope $m$ and passing through $(a,\varphi(a))$.
Replacing the portion of the graph of $\varphi$ above $[0,a]$ by the line segment $m(x-a)+\varphi(a)$ we obtain a continuous concave function which extends indefinitely at the left of $0$ and which is equal to \hbox{$\varphi$ on $[a,\infty)$.} 

 Assume $\varphi$ is not identically 0. Let $\varepsilon>0$. By Corollary~\ref{cor:phi_global} and by possibly taking a smaller $\varepsilon>0$,
we may assume that $\varphi$ is strictly increasing on $\varphi^{-1}([0,\varepsilon])$.
Let $a=\varphi^{-1}(\varepsilon/2)$  
and $m=\varphi'_+(a)$, and define $\widetilde{\varphi}_\varepsilon\colon \IR\to\IR$ by
 \[
\widetilde{\varphi}_\varepsilon(x) = 
\begin{cases}
\varphi(x) & \mbox{ if }  x\ge\varphi^{-1}(\varepsilon/2)\\ \varphi'_+(\varphi^{-1}(\varepsilon/2)) [x-\varphi^{-1}(\varepsilon/2)] +\varepsilon/2  & \mbox{ if }  x<\varphi^{-1}(\varepsilon/2).
\end{cases}
\]
The function $\widetilde{\varphi}_\varepsilon$ is concave and for all $x\ge 0$ it satisfies $\varphi(x)\le \widetilde{\varphi}_\varepsilon(x)\le\varphi(x)+\varepsilon/2$.
By Proposition~\ref{prop:conc_approx_open}, applied to the function $\widetilde{\varphi}_\varepsilon\colon \IR\to\IR$ and with $\varepsilon = \varepsilon/2$, there exists a $C^1$ concave function $\widetilde{\sigma}\colon \IR\to\IR$ such that $\widetilde{\varphi}_\varepsilon\le\ \widetilde{\sigma}\le\widetilde{\varphi}_\varepsilon+\varepsilon/2.$
Furthermore, if $\varphi_\varepsilon = \widetilde{\varphi}_\varepsilon|_{[0,\infty)}$ and $\sigma = \widetilde{\sigma}|_{[0,\infty)}$\, then on $[0,\infty)$ we have 
$$0\le\varphi \le {\varphi}_\varepsilon \le {\sigma}\le {\varphi}_\varepsilon+\varepsilon/2\le \varphi+\varepsilon.$$
Since $0\le {\sigma}(0)\le \varepsilon$ and $\sigma$ is increasing, it follows that $\sigma(x)-\sigma(0)\ge 0$ and  $$\varphi(x)-\varepsilon\le \sigma(x)-\sigma(0)\le \varphi(x)+\varepsilon$$ for all $x\ge 0$. Define $\psi (x)=\sigma(x)-\sigma(0)$.
Then $\psi\colon [0,\infty)\to[0,\infty)$ is a continuous concave function which is $C^1$ on $(0,\infty)$ and satisfies $\psi(0)=0$ and $|\varphi-\psi|\le\varepsilon$.
\end{proof}

\subsection{Approximately concave functions}\label{subsec:appr_conc} 
In this subsection we show that approximately midpoint-concave functions can be uniformly approximated by continuous concave functions (Corollary~\ref{cor:appr-midconc_conc}).
This result will be used in Section ~\ref{sec:app_conc_mt} and Section~\ref{sec:proof_thmB}.

\begin{definition}\label{defn:e-conc}
Let $\varphi\colon I\to\IR$ be defined on some interval $I\subseteq\IR$, and let $\delta\ge0$.
\begin{itemize}
\item[$(i)$] $\varphi$ is said to be \emph{$\delta$-concave} if for all $x,y\in I$ and all $t\in[0,1]$, \begin{equation*} 
(1-t)\varphi(x)+t\varphi(y)\le\varphi((1-t)x+ty)+\delta.\end{equation*}
\item[$(ii)$] $\varphi$ is called \emph{$\delta$-midpoint-concave} (or {\it $\delta$-midconcave}) if for all $x,y\in I$, \begin{equation*}   
\tfrac{1}{2}\varphi(x)+  \tfrac{1}{2}\varphi(y)\le\varphi\left(\tfrac{x+y}{2}\right)+\delta.\end{equation*}
\end{itemize}
\end{definition}

We say that the function $\varphi$ is {\it approximately concave} (respectively, {\it approximately midpoint-concave}) if it is $\delta$-concave (respectively, $\delta$-midpoint-concave) for some 
$\delta \geq 0$.
Taking $\delta=0$ recovers the definition of a {\it concave} (respectively, {\it midpoint-concave}) function. 

If $\varphi$ is continuous (or locally bounded from below) then $\varphi$ is concave if and only if $\varphi$ is midpoint-concave (see  \cite[Theorem VII.71.C]{Rob:73Con:aa}). Here we show that approximately midpoint-concave functions $\varphi\colon [0,\infty)\to[0,\infty)$ with $\varphi(0)=0$ can be uniformly approximated on $[0,\infty)$ by continuous concave functions $\psi$ satisfying $\psi(0)=0$. This is a consequence of the following two results.

\begin{proposition}[{\cite[Theorem 1]{Ng:93On-:aa}}]\label{thm:e-midconc_2e-conc}
Let $I\subset \IR$ be an open interval.
If \hbox{$\varphi\colon I\to\IR$} is $\delta$-midpoint-concave and locally bounded from below at a point in $I$ then $\varphi$ is $2\delta$-concave.\qed
\end{proposition}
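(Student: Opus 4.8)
The plan is to reduce the two-variable (plus parameter) statement to a normalized one-dimensional inequality, dispatch that inequality by a short reflection argument, and isolate the only genuinely technical point as a propagation-of-boundedness lemma.

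First I would fix $x,y\in I$ and introduce the defect function $h\colon[0,1]\to\IR$,
\[ h(t)=\varphi\bigl((1-t)x+ty\bigr)-(1-t)\varphi(x)-t\varphi(y), \]
so that the desired conclusion, namely $2\delta$-concavity, is exactly the assertion $h(t)\ge-2\delta$ for all $t\in[0,1]$. A direct computation shows $h(0)=h(1)=0$, and that $h$ is again $\delta$-midpoint-concave: writing $a=(1-s)x+sy$ and $b=(1-t)x+ty$, the midpoint of $a$ and $b$ corresponds to the parameter $\tfrac{s+t}{2}$, the affine terms cancel, and the $\delta$-midpoint-concavity of $\varphi$ gives $h\bigl(\tfrac{s+t}{2}\bigr)-\tfrac12\bigl(h(s)+h(t)\bigr)=\varphi\bigl(\tfrac{a+b}{2}\bigr)-\tfrac12\bigl(\varphi(a)+\varphi(b)\bigr)\ge-\delta$. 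Thus everything reduces to the following normalized claim: a $\delta$-midpoint-concave $h$ on $[0,1]$ that vanishes at the endpoints and is bounded below satisfies $h\ge-2\delta$.

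Granting that $h$ is bounded below, I would prove the sharp bound by a reflection argument. Put $m=\inf_{[0,1]}h$; if $m\ge0$ there is nothing to prove, so assume $m<0$ and pick, for each $\varepsilon>0$ small, a parameter $t^\ast\in(0,1)$ with $h(t^\ast)<m+\varepsilon$ (it is interior since $h(0)=h(1)=0>m$). If $t^\ast\le\tfrac12$ then $t^\ast$ is the midpoint of $0$ and $2t^\ast\in[0,1]$, so $\delta$-midpoint-concavity together with $h(0)=0$ gives $h(t^\ast)\ge\tfrac12 h(2t^\ast)-\delta\ge\tfrac{m}{2}-\delta$; if $t^\ast\ge\tfrac12$ the symmetric choice of $2t^\ast-1$ and $1$ yields the same estimate. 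Hence $m+\varepsilon>\tfrac{m}{2}-\delta$, and letting $\varepsilon\to0$ gives $m\ge-2\delta$, as required. I would emphasize that the constant $2\delta$ is produced entirely by this one step.

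It remains to justify that $h$ is bounded below, equivalently that $\varphi$ is bounded below on the compact segment $[\min(x,y),\max(x,y)]$, and this is where the main obstacle lies, since the hypothesis supplies boundedness below only near a single point. Here I would invoke the Bernstein–Doetsch mechanism applied to the $\delta$-midpoint-convex function $-\varphi$: local boundedness above at one point propagates to local boundedness above at every point, hence to boundedness above on every compact subinterval of $I$. The engine is iterated midpoint-convexity, which produces a dyadic Jensen inequality $-\varphi\bigl(\sum\lambda_i z_i\bigr)\le\sum\lambda_i(-\varphi(z_i))+C_n\delta$ for dyadic weights of denominator $2^n$; writing an arbitrary target point as such a dyadic convex combination of the good point and one auxiliary point, and letting the good point range over the neighborhood on which $-\varphi$ is bounded above, transfers the upper bound to a neighborhood of the target. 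The accumulated error $C_n\delta$ is an irrelevant finite constant at this stage, since only qualitative boundedness is needed; the sharp constant was already extracted in the reflection step. Executing this covering argument carefully across a compact subinterval is the technical heart of the proof, whereas the reduction to $h$ and the reflection estimate are short and clean.
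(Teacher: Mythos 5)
Your proposal is correct. Note, however, that the paper does not prove this proposition at all: it is imported verbatim as Theorem~1 of Ng--Nikodem \cite{Ng:93On-:aa} (hence the immediate \qed), so there is no internal proof to compare against. What you have written is a sound, self-contained reconstruction of essentially the standard argument behind that cited result: the reduction to the normalized defect function $h$ with $h(0)=h(1)=0$ is a correct computation (the affine terms cancel exactly, so $h$ inherits $\delta$-midpoint-concavity); the reflection step $m+\varepsilon> h(t^\ast)\ge \tfrac{m}{2}-\delta$ does yield the sharp constant $m\ge -2\delta$, which is precisely $2\delta$-concavity; and the Bernstein--Doetsch propagation (applied to the $\delta$-midpoint-convex function $-\varphi$, using dyadic combinations $w=(1-2^{-n})z+2^{-n}p$ with $z$ chosen close enough to the target so as to stay in the open interval $I$) correctly upgrades local boundedness below at one point to boundedness below on every compact subinterval, which is all the reflection step needs since only the finiteness of $\inf h$ matters there. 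The one place to be slightly careful in a full write-up is the accumulated error in the iterated (dyadic Jensen) inequality, which grows like $n\delta$ rather than staying bounded; as you observe, this is harmless because $n$ is fixed once the target point and auxiliary point are chosen, and the sharp constant has already been extracted in the reflection step. In short: correct proof, same underlying mechanism as the source the paper cites, just not an argument the paper itself supplies.
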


\begin{proposition}[{\cite[Theorem 2]{Hye:52App:aa}}]\label{thm:e-conc_conc}
Assume $\varphi\colon I\to\IR$ is $\delta$-concave on an open interval $I\subset\IR$.
Then there exists a continuous concave function $f\colon I\to \IR$ such that $|\varphi(x)-f(x)|\le \delta/2$,
\linebreak 
 for all $x\in I$. \qed
\end{proposition}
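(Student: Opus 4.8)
The plan is to construct $f$ as the \emph{least concave majorant} (upper concave envelope) of $\varphi$, shifted downward by $\delta/2$ to symmetrize the error. For each $z\in I$, define
\[
g(z) = \sup\left\{(1-t)\varphi(x) + t\varphi(y) : x,y\in I,\ t\in[0,1],\ (1-t)x+ty = z\right\}.
\]
The strategy is to show that $g$ is a finite-valued concave function satisfying $\varphi \le g \le \varphi + \delta$ on $I$, and then to take $f = g - \delta/2$.

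First I would establish the two-sided bound on $g$. The lower bound $g \ge \varphi$ is immediate by taking $x=y=z$ in the defining supremum. For the upper bound, the $\delta$-concavity hypothesis gives, for every admissible triple $(x,y,t)$ with $(1-t)x+ty=z$, the inequality $(1-t)\varphi(x)+t\varphi(y) \le \varphi(z) + \delta$; taking the supremum over all such triples yields $g(z) \le \varphi(z) + \delta$. In particular $g$ is finite-valued on $I$.

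Next I would verify that $g$ is concave. The key tool here is Carath\'eodory's theorem in dimension one: the supremum defining $g$ is unchanged if convex combinations of arbitrarily many points of $I$ are allowed, since any such combination with barycenter $z$ is dominated by a two-point combination with the same barycenter. Given $z_1,z_2\in I$ and $s\in[0,1]$, I would choose, for each $\varepsilon>0$, near-optimal chords realizing values within $\varepsilon$ of $g(z_1)$ and $g(z_2)$; amalgamating them produces a multi-point convex combination with barycenter $(1-s)z_1 + sz_2$ whose weighted $\varphi$-average is at least $(1-s)g(z_1) + sg(z_2) - \varepsilon$. By the two-point reduction this average is at most $g((1-s)z_1+sz_2)$, and letting $\varepsilon\to 0$ gives concavity of $g$. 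Since $g$ is concave and finite on the open interval $I$, the standard continuity of finite concave functions on the interior of their domain (cf.\ Proposition~\ref{prop:conc_diff}) ensures that $g$ is continuous on $I$.

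Finally, setting $f = g - \delta/2$ yields a continuous concave function, concavity being preserved under subtraction of a constant, and the bound $\varphi \le g \le \varphi + \delta$ translates to $\varphi - \delta/2 \le f \le \varphi + \delta/2$, that is, $|\varphi(x) - f(x)| \le \delta/2$ for all $x\in I$, as required. The main obstacle I anticipate is precisely the concavity of $g$: the sup-over-chords formula is convenient for deriving the upper bound but its concavity is not formally automatic and requires the one-dimensional Carath\'eodory reduction above. An alternative would be to define $g$ instead as the pointwise infimum of all affine functions dominating $\varphi$, which is manifestly concave; this route requires checking that an affine majorant exists near each point, which again follows from the finiteness estimate $g \le \varphi + \delta$.
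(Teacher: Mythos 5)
Your construction is correct. Note that the paper does not prove this statement at all: it is quoted directly from Hyers--Ulam \cite{Hye:52App:aa}, and your argument is essentially the one-dimensional case of their original proof, namely the concave-envelope (chord supremum) construction followed by the $\delta/2$ downward shift. The one step that genuinely needs care is the one you flag: the two-point supremum $g$ must be shown to dominate arbitrary finite convex combinations with the same barycenter, and your reduction is valid --- a $k$-point combination $(z,v)$ lies in the convex hull of the points $(x_i,\varphi(x_i))$ in $\IR^2$, and the upper endpoint of the vertical fiber of that polygon over $z$ lies on an edge, hence is a two-point combination with value at least $v$ --- after which concavity, finiteness via $\varphi\le g\le\varphi+\delta$, and continuity on the open interval all follow as you state.
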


\begin{corollary}\label{cor:appr-midconc_conc}
Assume $\varphi\colon [0,\infty)\to [0,\infty)$ is approximately midpoint-concave and $\varphi(0)=0$.
Then there exists a continuous concave function $\psi\colon [0,\infty)\to[0,\infty)$ satisfying $\psi(0)=0$ and such that $|\varphi(x)-\psi(x)|$ is uniformly bounded on $[0,\infty)$.
\end{corollary}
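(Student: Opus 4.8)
The plan is to pass to the open interval $(0,\infty)$, where the two quoted approximation results apply, produce a continuous concave approximant there, and then handle the endpoint $0$ by a continuous concave extension followed by a vertical shift. Write $\delta\ge 0$ for a constant witnessing that $\varphi$ is approximately midpoint-concave. Since $\varphi$ takes values in $[0,\infty)$ it is bounded from below by $0$, hence locally bounded from below at every point, so Proposition~\ref{thm:e-midconc_2e-conc} applies to $\varphi|_{(0,\infty)}$ and shows it is $2\delta$-concave on $(0,\infty)$. Proposition~\ref{thm:e-conc_conc} then yields a continuous concave function $f\colon (0,\infty)\to\IR$ with $|\varphi(x)-f(x)|\le\delta$ for all $x>0$.

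The next step is to extend $f$ to $[0,\infty)$. Because $\varphi\ge 0$, the bound $|\varphi-f|\le\delta$ gives $f\ge-\delta$ on $(0,\infty)$, so $f$ is bounded from below. A concave function on $(0,\infty)$ that is bounded from below must be nondecreasing: its right derivative $f'_+$ is nonincreasing by Proposition~\ref{prop:conc_diff}, and if $f'_+$ were negative at some point then, by the Fundamental Theorem of Calculus (Corollary~\ref{cor:conc_FT}), $f$ would decrease at least linearly and tend to $-\infty$, contradicting the lower bound. Consequently the limit $L:=\lim_{x\to 0^+}f(x)=\inf_{x>0}f(x)$ exists and is finite, with $-\delta\le L\le f(1)\le\varphi(1)+\delta$. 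Setting $f(0):=L$ extends $f$ to a continuous, nondecreasing, concave function on $[0,\infty)$, since the concavity inequalities pass to the limit as $x\to 0^+$.

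Finally, I would define $\psi(x):=f(x)-L$. Since $f$ is nondecreasing with minimum value $L=f(0)$, we have $\psi\ge 0$, so $\psi\colon[0,\infty)\to[0,\infty)$; it is continuous and concave as a vertical translate of $f$, and $\psi(0)=0$. For $x>0$ one estimates $|\psi(x)-\varphi(x)|\le|f(x)-\varphi(x)|+|L|\le\delta+|L|$, while $|\psi(0)-\varphi(0)|=0$, so $|\varphi-\psi|$ is uniformly bounded by $\delta+|L|$ on $[0,\infty)$, as required.

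I expect the only genuine obstacle to be the treatment of the endpoint $0$: the quoted approximation theorems are stated on open intervals and say nothing at the boundary, yet one must simultaneously arrange $\psi(0)=0$ and $\psi\ge 0$. The observation that resolves this is that the nonnegativity of $\varphi$ forces the concave approximant $f$ to be bounded from below and hence nondecreasing; this single fact guarantees both that the boundary limit $L$ is finite (so that the vertical shift keeps the approximation uniform) and that the shifted function $f-L$ remains nonnegative.
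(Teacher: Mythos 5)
Your proof is correct and follows essentially the same route as the paper's: reduce to $(0,\infty)$ via Proposition~\ref{thm:e-midconc_2e-conc} and Proposition~\ref{thm:e-conc_conc}, observe that the lower bound forces the concave approximant to be nondecreasing so it extends continuously to $0$, and then shift vertically by $f(0)$. Your explicit justification that bounded-below concave functions are nondecreasing (via the sign of $f'_+$), and your honest final bound $\delta+|L|$ in place of the paper's $2\delta$, are if anything slightly more careful than the published argument.
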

\begin{proof}
Assume $\varphi\colon [0,\infty)\to[0,\infty)$ is $\delta$-midpoint-concave, for some $\delta\ge 0$. Since $\varphi$ is bounded from below by $0$ on $(0,\infty)$ by Proposition~\ref{thm:e-midconc_2e-conc}, $\varphi$ is $2\delta$-concave on $(0,\infty)$, and by Proposition~\ref{thm:e-conc_conc}, there exists a continuous concave function $f\colon (0,\infty)\to \IR$ such that $|\varphi(x)-f(x)|\le \delta$, for all $x>0$. Notice that $f$ is bounded from below by $ -\delta$ on $(0,\infty)$ and since $f$ is continuous, it is also nondecreasing (see Corollary~\ref{cor:phi_global}). Thus $f$ can be extended by continuity at $0$ and $f(0)=\lim_{x\to 0^+}f(x)\ge -\delta$. Define $\psi\colon [0,\infty)\to [0,\infty)$ by $\psi(x)=f(x)-f(0)$. Then $\psi$ is continuous, concave, and satisfies $\psi(0)=0$ and $|\varphi(x)-\psi(x)|\le |\varphi(x)-f(x)|+|f(0)|\le 2\delta$ for all $x\ge 0$. 
\end{proof}


\section{Gromov Hyperbolic Spaces}\label{sec:Gromov_hyperbolic}

 Gromov hyperbolic spaces were introduced by Gromov in his landmark paper \cite{Gro:87Hyp:aa} to study infinite groups as geometric objects.
 See \cite{Vai:05Gro:aa} for the basics of Gromov hyperbolic spaces for intrinsic metric spaces.
 In this paper, unless otherwise specified, we do not assume that a metric space is intrinsic or geodesic.

\subsection{Gromov Hyperbolic Spaces} \label{subsec:delta-hyp}

\begin{definition}\label{def:Gromov_prod}
Let $(X,d)$ be a metric space and let $w\in X$.  For $x,y\in X$, the Gromov product of $x$ and $y$ with respect to $w$ is defined to be \begin{equation*}(x\mid y)_w = \tfrac{1}{2}\left[d(x,w)+d(y,w)-d(x,y)\right].\end{equation*}
\end{definition}

\begin{definition}\label{defn:delta_hyp}
Let $\delta\ge 0$. The metric space $(X,d)$ is said to be {\it $\delta$-hyperbolic} if 
\begin{equation*}
(x\mid y)_w\ge\min\left\{(x\mid z)_w, (y\mid z)_w\right\}-\delta
\end{equation*}
 for all $x,y,z,w\in X$. A metric space $(X,d)$ is said to be {\it Gromov hyperbolic} if it is $\delta$-hyperbolic for some $\delta \geq 0$.                        
 \end{definition}

 An  inequality equivalent to that in Definition~\ref{defn:delta_hyp}, known as {\it the $4$-point inequality}, is given in the following proposition. 
 
\begin{proposition}[{\cite[Lemma 1.1.A]{Gro:87Hyp:aa}}]\label{prop:4points}
$(X,d)$ is $\delta$-hyperbolic if and only if  
\begin{equation*}\label{eq:4points}
d(x,y)+d(z,w)\le\max\{d(x,z)+d(y,w), d(y,z)+d(x,w)\}+2\delta
\end{equation*}
for all $x,y,z,w\in X.$ \qed
\end{proposition}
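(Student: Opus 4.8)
The plan is to prove the two inequalities equivalent \emph{pointwise}, that is, for each fixed four-tuple $x,y,z,w \in X$ I will show that the defining inequality of \cref{defn:delta_hyp} holds for that tuple if and only if the four-point inequality holds for the same tuple. Since both conditions are universally quantified over all four-tuples, establishing the equivalence tuple-by-tuple immediately yields the equivalence of the two global conditions. The whole argument is a reversible chain of elementary algebraic manipulations, so there is no need to treat the two implications separately.

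First I would clear denominators by expanding the Gromov products in terms of $d$. Multiplying the inequality $(x\mid y)_w \ge \min\{(x\mid z)_w,(y\mid z)_w\} - \delta$ by $2$ and substituting the definition of the Gromov product turns it into
\[
d(x,w)+d(y,w)-d(x,y) \ge \min\{d(x,w)+d(z,w)-d(x,z),\; d(y,w)+d(z,w)-d(y,z)\} - 2\delta,
\]
using that $\min$ commutes with multiplication by the positive constant $2$. The next step is to factor the common summand $d(z,w)$ out of the minimum and to rewrite the resulting minimum as a maximum via the identity $\min\{a,b\} = -\max\{-a,-b\}$.

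After these two observations the inequality reads
\[
d(x,w)+d(y,w)-d(x,y)-d(z,w)+2\delta \ge -\max\{d(x,z)-d(x,w),\; d(y,z)-d(y,w)\}.
\]
Transposing and adding $d(x,w)+d(y,w)$ to both sides, then distributing this sum into the maximum through the identity $\max\{a,b\}+c = \max\{a+c,b+c\}$, collapses the right-hand side to $\max\{d(x,z)+d(y,w),\; d(y,z)+d(x,w)\}$ and produces precisely the four-point inequality $d(x,y)+d(z,w) \le \max\{d(x,z)+d(y,w),\, d(y,z)+d(x,w)\} + 2\delta$ of the statement. Reading the chain backwards gives the converse.

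I do not expect a genuine obstacle here: the content is purely algebraic and each step is an equivalence. The only points requiring care are bookkeeping ones — keeping the six interpoint distances correctly paired when factoring and distributing, and confirming that the four points play matching roles on the two sides (the basepoint $w$ and the auxiliary point $z$ of \cref{defn:delta_hyp} become, together with $x$ and $y$, the four symmetric vertices of the four-point inequality). Because both conditions are quantified over all of $X^4$, this role-matching is automatic once the pointwise equivalence is verified.
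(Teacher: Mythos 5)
Your argument is correct: the chain of manipulations you describe does establish the tuple-by-tuple equivalence of the two inequalities (with the same $\delta$), and the role-matching works out exactly as labelled in the statement. The paper gives no proof of this proposition, citing it to Gromov, and what you have written is precisely the standard direct computation that such citations leave to the reader.
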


Two metric spaces $X$ and $Y$ are said to be {\it roughly similar} if there exists a (not necessarily continuous) map  $f\colon X\to Y$
and constants $\lambda>0$, $k\ge0$ such that $\sup_{y\in Y}d_Y(y,f(X))\le k$ and for all $x,x'\in X$ $$\lambda d_X(x,x')-k\le d_Y(f(x),f(x'))\le \lambda d_X(x,x')+k.$$  A straightforward argument shows that  Gromov hyperbolicity is preserved by rough similarity. 
\begin{proposition}\label{prop:rough-sim_hyp}
If  $X$ and $Y$ are roughly similar metric spaces then $X$ is Gromov hyperbolic if and only if $Y$ is Gromov hyperbolic.
 \end{proposition}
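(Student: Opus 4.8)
The plan is to establish \Cref{prop:rough-sim_hyp} by showing that Gromov hyperbolicity is preserved in both directions under rough similarity, and by symmetry of the relation it suffices to prove one implication carefully. So suppose $Y$ is $\delta$-hyperbolic and $f\colon X\to Y$ is a rough similarity with constants $\lambda>0$ and $k\ge 0$; I would prove that $X$ is $\delta'$-hyperbolic for a suitable $\delta'$ depending only on $\delta$, $\lambda$ and $k$. The natural tool is the $4$-point inequality of \Cref{prop:4points}, since it is stated purely in terms of pairwise distances and is therefore far more convenient to transport across a rough similarity than the Gromov product formulation, which would force me to track a basepoint.

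First I would fix four arbitrary points $x_1,x_2,x_3,x_4\in X$ and apply the $4$-point inequality in $Y$ to their images $f(x_1),\dots,f(x_4)$, which is legitimate because $Y$ is $\delta$-hyperbolic. This yields
\begin{equation*}
d_Y(f(x_1),f(x_2))+d_Y(f(x_3),f(x_4))\le\max\{d_Y(f(x_1),f(x_3))+d_Y(f(x_2),f(x_4)),\ d_Y(f(x_2),f(x_3))+d_Y(f(x_1),f(x_4))\}+2\delta.
\end{equation*}
The next step is to replace every occurrence of $d_Y(f(x_i),f(x_j))$ by $\lambda\, d_X(x_i,x_j)$ using the two-sided estimate $\lambda d_X(x_i,x_j)-k\le d_Y(f(x_i),f(x_j))\le\lambda d_X(x_i,x_j)+k$. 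On the left-hand side I would use the lower bound, and on the right-hand side (inside both branches of the max) I would use the upper bound; since $\max\{a+\varepsilon,b+\varepsilon\}=\max\{a,b\}+\varepsilon$ for a common additive constant, the error terms $+k$ can be pulled outside the maximum cleanly. Dividing through by $\lambda>0$ and collecting the additive errors, I obtain the $4$-point inequality for $X$ with the new constant $\delta'=\delta/\lambda+2k/\lambda$, so $X$ is $\delta'$-hyperbolic. Finally, I would observe that the relation of rough similarity is symmetric up to adjusting constants (a rough similarity admits a rough inverse), so the reverse implication follows by interchanging the roles of $X$ and $Y$.

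I do not expect any genuine obstacle here; the argument is the ``straightforward'' one promised in the text. The only point demanding mild care is the bookkeeping of constants: one must apply the lower bound on the side of the inequality that helps (the left) and the upper bound on the side that helps (the right), and verify that the additive errors factor through the maximum rather than interacting with it. A secondary subtlety worth a line is justifying the symmetry of the rough-similarity relation: rather than constructing an explicit rough inverse of $f$, it is cleanest simply to note that the two-sided distance comparison together with the $k$-density condition $\sup_{y\in Y}d_Y(y,f(X))\le k$ lets one run the identical max-manipulation in the opposite direction, so no new ideas are needed and the proof is symmetric in $X$ and $Y$ apart from relabeling.
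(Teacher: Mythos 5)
Your proposal is correct and uses the same essential technique as the paper: transporting the $4$-point inequality of \Cref{prop:4points} across the rough similarity using the two-sided distance estimates and absorbing the additive errors into the constant. The only difference is which implication each of you writes out in detail --- you detail the easier direction ($Y$ hyperbolic $\Rightarrow$ $X$ hyperbolic, which uses only the distance comparison) and defer the converse to symmetry, whereas the paper details the direction $X\Rightarrow Y$, where one must first invoke the $k$-coboundedness of $f(X)$ to choose approximate preimages in $X$ of four given points of $Y$; since you explicitly identify that density condition as what makes the reverse direction go through, there is no gap.
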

 \begin{proof}
Let $f\colon X\to Y$ be a $(\lambda,k)$-rough similarity, with $\lambda>0$ and $k\ge0$. Assume $X$ is \hbox{$\delta$-hyperbolic,} $\delta\ge0$. Since $f(X)$ is $k$-cobounded in $Y$, for any $x',y'\in Y$ there exist $x,y\in X$ such that $d_y(x',f(x))\le k$ and $d_Y(y',f(y))\le k$. Thus $d_Y(x',y')\le d_Y(x',f(x))+d_Y(f(x),f(y))+d_Y(y',f(y))\le d_Y(f(x),f(y))+2k\le \lambda d_X(x,y)+3k.$
For all $x',y',z',w'\in Y$ we have 
\begin{align*}
&d_Y(x',y')+d_Y(z',y') \le \lambda\left[d_X(x,y)+d_X(z,w)\right]+6k\\
&\le \lambda\left[\max\{d_X(x,z)+d_X(y,w),d_X(x,w)+d_X(y,z)\}+2\delta\right] +6k\\
&= \max\{\lambda d_X(x,z)+\lambda d_X(y,w),\lambda d_X(x,w)+\lambda d_X(y,z)\}+2\lambda\delta +6k\\
&\le \max\{d_Y(f(x),f(z))+d_Y(f(y),f(w)),d_Y(f(x),f(w))+d_Y(f(y),f(z))\}+2\lambda\delta +8k\\
&\le \max\{d_Y(x',z')+d_Y(y',w'),d_Y(x',w')+d_Y(y',z')\}+2\lambda\delta +12k.
\end{align*}
By Proposition~\ref{prop:4points}, $Y$ is $(\lambda\delta+6k)$-hyperbolic. The proof of the converse is similar. 
 \end{proof}

 Given constants $\lambda>0$ and $k\ge0$, we say that a function $\varphi\colon[0,\infty)\to[0,\infty)$ is a {\it $(\lambda,k)$-approximate dilation} if $|\varphi(t)-\lambda t|\le k$ for all $t\ge 0$.   
The function $\varphi$ is an {\it approximate dilation} if it is a \hbox{$(\lambda,k)$-approximate} dilation for some $\lambda>0$ and $k\ge0$.

\begin{remark}\label{rem:appr-dilation_rough-similar}
If $\varphi\in \mathcal{M}$ is a $(\lambda,k)$-approximate dilation then $(X,d)$ and $(X,d_\varphi)$ are $(\lambda,k)$-roughly similar. 
\end{remark}

Remark~\ref{rem:appr-dilation_rough-similar} and Proposition~\ref{prop:rough-sim_hyp} have the following consequence.

\begin{proposition}\label{prop:appr_dilation} 
If $(X,d)$ is a Gromov hyperbolic metric space and 
$\varphi\in\mathcal{M}$ is an approximate dilation then $(X,d_\varphi)$ is Gromov hyperbolic.\qed
\end{proposition}


\subsection{Approximately Ultrametric Spaces}\label{subsec:ultrametric}
 Recall that a metric space $(X, d)$ is  {\it ultrametric} if the metric $d$ satisfies the inequality:
 for all $x,y,z\in X$,  $d(x,y)\le\max\{d(x,z),d(y,z)\},$ 
a condition which implies the triangle inequality.
\begin{definition}\label{defn:delta_ultrametric}
Let $\delta\ge 0$. 
We say that a metric space $(X,d)$ is {\it $\delta$-ultrametric} if for all $x,y,z\in X$  
 \begin{equation*}
 d(x,y)\le \max\{d(x,z),d(y,z)\}+\delta.
 \end{equation*}
 We say that $(X,d)$ is {\it approximately ultrametric} if it is $\delta$-ultrametric for some $\delta \ge 0$. 
\end{definition}
Let $x,y,z\in X$ and let $s, m$ and $l$ denote the smallest, medium and largest of the distances $d(x,y)$, $d(y,z)$ and $d(x,z)$. Then the $\delta$-ultrametric condition is equivalent to 
$l-m\le \delta$.
Note that if $\delta=0$ this implies that $l=m$, exhibiting a well-known characteristic of ultrametric spaces, namely that triangles in such spaces are either acute isosceles (that is, the equal sides are the larger sides) or equilateral.
\linebreak 
If $\delta>0$ then any triangle triplet $(l_1,l_2,l_3)$ consisting of nonnegative numbers less or equal to $\delta$ satisfy the $\delta$-ultrametric condition, and if one of the numbers $l_i$ is greater than $\delta$ then there is at least one other number $l_j$, $j\neq i$ in the triplet satisfying $l_j\ge l_i-\delta$. In other words, in $\delta$-ultrametric spaces ``small triangles'' (with side length less than $\delta$) can have any shape, and ``large triangles'' (with one side length at least $\delta$) are acute $\delta$-almost isosceles or $\delta$-almost equilateral.  

The relationship between $\delta$-ultrametric and $\delta$-hyperbolic spaces is given by Proposition~\ref{prop:ultra-delta} below.
For this purpose, we need the following lemma.

\begin{lemma}\label{lem:sums}
Let $\delta\ge0$ and let $a_{ij}\in\bbR$, $i,j\in\{1,2,3,4\}$, be such that $a_{ij}=a_{ji}$.
\begin{itemize} 
\item[$(i)$]  If $a_{ij}\le\max\{a_{ik},a_{kj}\}+\delta$ for all $i,j,k$\, then \[a_{ij}+a_{kl}\le \max\{ a_{ik}+a_{jl}, a_{il}+a_{jk}\}+2\delta,\]
\item[$(ii)$] If $a_{ij}\ge\min\{a_{ik},a_{kj}\}-\delta$ for all  $i,j,k$\, then \[a_{ij}+a_{kl}\ge \min\{ a_{ik}+a_{jl}, a_{il}+a_{jk}\}-2\delta.\]
\end{itemize}
\end{lemma}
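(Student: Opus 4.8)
The plan is to prove part $(i)$ directly and then deduce part $(ii)$ from it by negating all the entries. For the deduction, set $b_{ij} = -a_{ij}$; this array is again symmetric, and the hypothesis of $(ii)$, namely $a_{ij} \ge \min\{a_{ik}, a_{kj}\} - \delta$, is equivalent to $b_{ij} \le \max\{b_{ik}, b_{kj}\} + \delta$, which is precisely the hypothesis of $(i)$ for $(b_{ij})$. Applying $(i)$ to $(b_{ij})$ and negating the resulting inequality turns the $\max$ into a $\min$ and reverses the inequality, yielding exactly the conclusion of $(ii)$. So it suffices to establish $(i)$.

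For part $(i)$, fix indices $i,j,k,l$ and single out the four ``cross'' entries $a_{ik}, a_{il}, a_{jk}, a_{jl}$. The desired conclusion is invariant under the relabelings $i \leftrightarrow j$ and $k \leftrightarrow l$: by symmetry of the array each of these fixes the pair $\{a_{ij}, a_{kl}\}$ and merely permutes the two sums $a_{ik}+a_{jl}$ and $a_{il}+a_{jk}$, while the hypothesis, holding for all triples, is unaffected. Since these relabelings move $a_{ik}$ onto each of the four cross entries, I may assume \emph{without loss of generality} that $a_{ik} = \max\{a_{ik}, a_{il}, a_{jk}, a_{jl}\}$. The key step is then to apply the three-point hypothesis to $a_{ij}$ and to $a_{kl}$ using intermediate indices chosen so that the maximal entry $a_{ik}$ does not reappear: taking $l$ as the intermediate index for the pair $(i,j)$ gives $a_{ij} \le \max\{a_{il}, a_{jl}\} + \delta$, and taking $j$ as the intermediate index for the pair $(k,l)$ gives $a_{kl} \le \max\{a_{jk}, a_{jl}\} + \delta$. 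Adding these yields
\[
a_{ij} + a_{kl} \le \max\{a_{il}, a_{jl}\} + \max\{a_{jk}, a_{jl}\} + 2\delta.
\]

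It then remains to verify the elementary inequality $\max\{a_{il}, a_{jl}\} + \max\{a_{jk}, a_{jl}\} \le \max\{a_{ik}+a_{jl},\, a_{il}+a_{jk}\}$, using only that $a_{ik}$ dominates the other three cross entries. This reduces to a short case analysis according to which argument realizes each of the two maxima on the left: if they are realized by $a_{il}$ and $a_{jk}$ respectively, the left side equals $a_{il}+a_{jk}$, the second term on the right; in each of the three remaining cases the term $a_{jl}$ appears, and replacing the other summand (which is dominated by $a_{ik}$) bounds the left side by $a_{ik}+a_{jl}$. Combining this with the displayed inequality completes $(i)$. I expect the main, though modest, obstacle to be organizing the two reductions cleanly — selecting the intermediate indices precisely so as to exclude the maximal entry, and recognizing that the symmetry of the array legitimizes the normalization ``$a_{ik}$ is largest'' — after which the remaining estimate is routine.
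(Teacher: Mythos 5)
Your proof is correct. Part $(ii)$ is deduced from part $(i)$ by negation exactly as in the paper. For part $(i)$ both arguments run on the same engine --- apply the three-point hypothesis twice with well-chosen intermediate indices and then do a small case analysis --- but the bookkeeping is organized differently, and yours is arguably the cleaner of the two. The paper normalizes on the \emph{diagonal} entries (assume $a_{ij}+a_{kl}$ is the largest of the three sums and $a_{kl}\le a_{ij}$), bounds $a_{ij}+a_{kl}\le 2a_{ij}$, applies the hypothesis twice to $a_{ij}$ alone, and then splits into four sub-cases according to which cross entries realize the maxima; in the two ``mixed'' sub-cases it must invoke the hypothesis a third time, now on $a_{kl}$. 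You instead normalize on the \emph{cross} entries (WLOG $a_{ik}$ is the largest of the four, justified by the $i\leftrightarrow j$ and $k\leftrightarrow l$ symmetries, whose generated group does carry $a_{ik}$ onto each cross entry while permuting the two target sums), apply the hypothesis once to $a_{ij}$ via $l$ and once to $a_{kl}$ via $j$ so that the dominant entry $a_{ik}$ never appears on the right, and close with a purely order-theoretic inequality about maxima whose four cases are all immediate. Each step checks out, including the final estimate $\max\{a_{il},a_{jl}\}+\max\{a_{jk},a_{jl}\}\le\max\{a_{ik}+a_{jl},\,a_{il}+a_{jk}\}$ under the domination assumption; what your organization buys is a uniform argument with no sub-case requiring a further appeal to the hypothesis, at the modest cost of the symmetry justification for the normalization.
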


Note that if $L, M$ and $S$ denote the largest, medium and smallest of the sums $a_{ij}+a_{kl}$, $a_{ik}+a_{jl}$ and $a_{il}+a_{jk}$ for some choice of $i,j,k,l\in\{1,2,3,4\}$,  then the conclusion in part $(i)$ of the lemma is equivalent to $L-M\le2\delta$, and the one in part $(ii)$ to $M-S\le 2\delta$.

\begin{proof}
\noindent $(i)$ Fix $i,j,k,l\in\{1,2,3,4\}$. Without loss of generality, assume that $L=a_{ij}+a_{kl}$ is the largest sum and assume that $ a_{kl}\le a_{ij}$. Since  
$a_{ij}\le\max\{a_{ik},a_{kj}\}+\delta$ 
 and $a_{ij}\le\max\{a_{il},a_{lj}\}+\delta$, we have 
\begin{equation*}a_{ij}+a_{kl}\le 2a_{ij}\le\max\{a_{ik}+a_{il},a_{ik}+a_{lj},a_{kj}+a_{il},a_{kj}+a_{lj}\}+2\delta.\end{equation*}

\noindent If $a_{ik}\ge a_{kj}$ and $a_{lj}\ge a_{il}$ then 
$$M=a_{ik}+a_{lj} = \max\{a_{ik}+a_{il},a_{ik}+a_{lj},a_{kj}+a_{il},a_{kj}+a_{lj}\}$$
and if $a_{ik}\le a_{kj}$ and $a_{lj}\le a_{il}$ then  $$M=a_{kj}+a_{il} = \max\{a_{ik}+a_{il},a_{ik}+a_{lj},a_{kj}+a_{il},a_{kj}+a_{lj}\}.$$
In both cases, $L-M\le 2\delta$. Furthermore, if $a_{ik}\ge a_{kj}$ and $a_{lj}\le a_{il}$\, then $a_{ij}\le\max\{a_{ik},a_{kj}\}+\delta = a_{ik}+\delta$ and $a_{ij}\le\max\{a_{il},a_{lj}\}+\delta = a_{il}+\delta$, and since $a_{kl}\le\max\{a_{kj},a_{lj}\}+\delta$,
 \begin{align*}
 a_{ij}+a_{kl}& \le a_{ij}+\max\{a_{kj},a_{lj}\}+\delta = \max\{a_{ij}+a_{kj},a_{ij}+a_{lj}\}+\delta\\
 &\le \max\{a_{il}+\delta+a_{kj},a_{ik}+\delta+a_{lj}\}+\delta = \max\{a_{il}+a_{kj},a_{ik}+a_{lj}\}+2\delta.
 \end{align*}
\noindent Finally, if $a_{ik}\le a_{kj}$ and $a_{lj}\ge a_{il}$\, then $a_{ij}\le\max\{a_{ik},a_{kj}\}+\delta = a_{kj}+\delta$ and $a_{ij}\le\max\{a_{il},a_{lj}\}+\delta = a_{lj}+\delta$, and since $a_{kl}\le\max\{a_{ki},a_{il}\}+\delta$, we have $a_{ij}+a_{kl} \le a_{ij}+\max\{a_{ki},a_{il}\}+\delta\le\max\{a_{lj}+a_{ki},a_{kj}+a_{il}\}+2\delta$, that is, $L-M\le 2\delta.$

\noindent $(ii)$ Follows from $(i)$ by taking the negatives of $a_{ij}$. 
\end{proof}

\begin{proposition}\label{prop:ultra-delta} 
If $(X,d)$ is $\delta$-ultrametric then $(X,d)$ is $\delta$-hyperbolic.
\end{proposition}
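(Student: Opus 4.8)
The plan is to reduce the statement to the four-point characterization of hyperbolicity in Proposition~\ref{prop:4points} and then invoke the combinatorial Lemma~\ref{lem:sums}(i), which was evidently set up for exactly this purpose. The key observation is that the $\delta$-ultrametric condition of Definition~\ref{defn:delta_ultrametric} is precisely a three-point inequality of the form appearing in the hypothesis of Lemma~\ref{lem:sums}(i), while the four-point inequality of Proposition~\ref{prop:4points} is precisely its conclusion. So the whole proof amounts to matching these up.

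Concretely, I would fix four arbitrary points $x_1, x_2, x_3, x_4 \in X$ and set $a_{ij} = d(x_i, x_j)$ for $i,j \in \{1,2,3,4\}$. Symmetry of the metric gives $a_{ij} = a_{ji}$, so the standing hypothesis of Lemma~\ref{lem:sums} is satisfied. The assumption that $(X,d)$ is $\delta$-ultrametric says $d(x_i,x_j) \le \max\{d(x_i,x_k), d(x_k,x_j)\} + \delta$ for all triples, that is, $a_{ij} \le \max\{a_{ik}, a_{kj}\} + \delta$ for all $i,j,k$, which is exactly the hypothesis of part $(i)$ of the lemma.

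Applying Lemma~\ref{lem:sums}(i) then yields $a_{ij} + a_{kl} \le \max\{a_{ik}+a_{jl}, a_{il}+a_{jk}\} + 2\delta$ for every choice of indices; taking $(i,j,k,l) = (1,2,3,4)$ and translating back into distances gives $d(x_1,x_2) + d(x_3,x_4) \le \max\{d(x_1,x_3)+d(x_2,x_4), d(x_1,x_4)+d(x_2,x_3)\} + 2\delta$. Since $x_1, x_2, x_3, x_4$ were arbitrary, this is exactly the four-point inequality of Proposition~\ref{prop:4points} with the \emph{same} constant $\delta$, and hence $(X,d)$ is $\delta$-hyperbolic, as claimed.

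I do not expect a genuine obstacle here: Lemma~\ref{lem:sums} carries out all of the combinatorial work, and the proof reduces to a verification that the three-point ultrametric inequality feeds correctly into the lemma's hypothesis and that the resulting four-point sum inequality coincides with Proposition~\ref{prop:4points} on the nose, with no loss in the constant. The only minor point worth double-checking is the bookkeeping of which pairs of distances appear inside the maximum; but because the conclusion of the lemma already maximizes over the two relevant pairings, this agrees with the statement of the four-point inequality automatically.
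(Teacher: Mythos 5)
Your proof is correct and is essentially identical to the paper's: both set $a_{ij}=d(x_i,x_j)$, apply Lemma~\ref{lem:sums}(i) to obtain the four-point inequality with constant $2\delta$, and conclude via Proposition~\ref{prop:4points}. No issues.
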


\begin{proof}
Let $x_i, i=1,2,3,4,$ be four points in $X$. By part $(i)$ of Lemma~\ref{lem:sums}, with $a_{ij} = d(x_i,x_j)$,  \[d(x_i,x_j)+d(x_k,x_l)\le \max\{d(x_i,x_k)+d(x_j,x_l),d(x_i,x_l)+d(x_j,x_k)\}+2\delta\] and the conclusion follows from Proposition~\ref{prop:4points}.
\end{proof}

The case $\delta=0$ in Proposition~\ref{prop:ultra-delta}, that is, the fact that ultrametric spaces are $0$-hyperbolic, was  observed in \cite[(2.4)]{Ibr:12Mob:aa}.

\begin{remark}
The converse of Proposition~\ref{prop:ultra-delta} is not true.
For example, the Euclidean half line $([0,\infty), |\cdot|)$ is $0$-hyperbolic but not $\delta$-ultrametric for any $\delta\ge0$. To see this, let $x,y\ge 0$ and $z=(x+y)/2$.
Then the $\delta$-ultrametric condition  is equivalent to $|x-y|\le 2\delta$, which cannot be valid for all $x,y\ge0$.
\end{remark}

\begin{definition}
Let $\eta\ge0$. We say that a function $\varphi\colon [0,\infty)\to \IR$ is {\it $\eta$-nondecreasing} if $t\le s$ implies $\varphi(t)\le \varphi(s)+\eta$.
\end{definition}

Observe that if $\varphi\colon [0,\infty) \to \IR$ is $\eta$-nondecreasing then the function $\varphi^+$ given by $\varphi^+(t) = \sup\{ \varphi(s)  \mid s \leq t\}$ is nondecreasing and satisfies
$0 \leq \varphi^+(t) - \varphi(t) \leq \eta$.  

We say that the function $\varphi\colon [0,\infty) \to\IR$ is {\it approximately nondecreasing} if $\varphi$ is $\eta$-nondecreasing for some $\eta\ge0$. 
 Note that $\varphi$ is approximately nondecreasing if and only if $\varphi$ is within a bounded distance from a nondecreasing function.

\begin{proposition}\label{prop:appr_log-like}
Let $(X,d)$ be a metric space and let $\delta,\eta\ge 0$. If $\varphi\in \mathcal{M}$ is $\eta$-nondecreasing and satisfies $\varphi(2t)-\varphi(t)\le \delta$ for all $t\ge 0$\, then $(X,d_\varphi)$ is $(\delta+2\eta)$-ultrametric.
\end{proposition}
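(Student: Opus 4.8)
The plan is to verify the $(\delta+2\eta)$-ultrametric inequality of Definition~\ref{defn:delta_ultrametric} directly for the space $(X,d_\varphi)$, using only the triangle inequality for $d$, the $\eta$-nondecreasing hypothesis, and the doubling estimate $\varphi(2t)-\varphi(t)\le\delta$; the assumption $\varphi\in\mathcal{M}$ enters only to guarantee that $d_\varphi$ is a metric in the first place. First I would fix three points $x,y,z\in X$ and abbreviate the pairwise distances in the original metric as $a=d(x,z)$, $b=d(z,y)$, and $c=d(x,y)$. Since $\max\{\varphi(a),\varphi(b)\}$ is symmetric in $a$ and $b$, and interchanging $x$ and $y$ swaps $a$ and $b$ while fixing $c$, there is no loss of generality in assuming $a\le b$, so that $b=\max\{a,b\}$. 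The goal then reduces to establishing
\[
\varphi(c)\le \varphi(b)+\delta+2\eta,
\]
because $\varphi(b)\le\max\{\varphi(a),\varphi(b)\}$, and this is exactly the $(\delta+2\eta)$-ultrametric condition for the triple $x,y,z$.

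The geometric input is the triangle inequality $c\le a+b$, which together with $a\le b$ gives $c\le 2b$. The first step converts this distance comparison into a comparison of $\varphi$-values: since $\varphi$ is $\eta$-nondecreasing, applying the defining inequality to $c\le 2b$ yields $\varphi(c)\le\varphi(2b)+\eta$. The second step absorbs the factor of two using the logarithm-like hypothesis: evaluating $\varphi(2t)-\varphi(t)\le\delta$ at $t=b$ gives $\varphi(2b)\le\varphi(b)+\delta$. Chaining these produces $\varphi(c)\le\varphi(b)+\delta+\eta$, which is in fact a touch stronger than needed. If one prefers to see the stated constant $\delta+2\eta$ arise naturally and symmetrically, without the reduction $a\le b$, I would instead split the comparison into the two-step chain $c\le a+b\le 2\max\{a,b\}$ and apply $\eta$-nondecreasing twice before invoking the doubling estimate once; this is where the two copies of $\eta$ come from. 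Either route, translated back, gives $d_\varphi(x,y)\le\max\{d_\varphi(x,z),d_\varphi(z,y)\}+(\delta+2\eta)$ for all $x,y,z$.

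I do not expect a genuine obstacle here; the proposition is essentially a bookkeeping exercise, and the entire argument is a three-line chain of inequalities. The one point that deserves care is keeping the two sources of additive error conceptually separate: the $\delta$ measures the failure of $\varphi$ to be invariant under doubling, while the $\eta$ terms measure the failure of $\varphi$ to be genuinely monotone. A secondary subtlety worth stating explicitly is that the target of the comparison is $\max\{\varphi(a),\varphi(b)\}$ rather than $\varphi(\max\{a,b\})$; since the argument only ever uses the trivial bound $\varphi(b)\le\max\{\varphi(a),\varphi(b)\}$, this causes no trouble, and indeed it is the reason the monotonicity defect enters only additively instead of being amplified.
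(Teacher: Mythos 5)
Your proof is correct and follows essentially the same route as the paper: the paper chains $d(x,y)\le d(x,z)+d(y,z)\le \max\{2d(x,z),2d(y,z)\}$, applies the $\eta$-nondecreasing hypothesis twice and the doubling bound once, which is exactly the symmetric variant you describe. Your primary version (reducing to $a\le b$ so that $c\le 2b$) even yields the slightly sharper constant $\delta+\eta$, which of course still implies the stated $(\delta+2\eta)$-ultrametric conclusion.
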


\begin{proof} For any $x,y,z\in X$, \begin{align*}
d_\varphi(x,y)& = \varphi(d(x,y))\\
& \le\varphi(d(x,z)+d(y,z))+\eta \quad \text{since $\varphi$ is $\eta$-nondecreasing}\\
& \le\varphi(\max\{2d(x,z),\, 2d(y,z)\})+2\eta  \\ 
& \le\max\{\varphi(2d(x,z)),\, \varphi(2d(y,z))\} +2\eta  \\
&\le \max\{\varphi(d(x,z))+\delta,\, \varphi(d(y,z))+\delta\} +2\eta \quad \text{since } \varphi(2t)-\varphi(t)\le \delta\\
& = \max\{d_\varphi(x,z),\, d_\varphi(y,z)\}+\delta+2\eta
\end{align*}
which shows that $(X,d_\varphi)$ is $(\delta+2\eta)$-ultrametric.
\end{proof}

\begin{corollary}[{\cite[Example 1.2(c)]{Gro:87Hyp:aa}}]\label{cor:ln-ultra}
Let  $\varphi(t)=\log(1+t)$, $t\ge 0$.
For any metric space $(X,d)$, the transformed metric space $(X, d_\varphi)$ is $\log(2)$-ultrametric and so by Proposition~\ref{prop:ultra-delta} is $\log(2)$-hyperbolic.
\end{corollary}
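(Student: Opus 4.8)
The plan is to recognize this corollary as an immediate application of Proposition~\ref{prop:appr_log-like} to the specific transform $\varphi(t) = \log(1+t)$, so that the work reduces to verifying the hypotheses of that proposition for this $\varphi$. First I would confirm that $\varphi \in \mathcal{M}$. The function $\varphi$ maps $[0,\infty)$ into $[0,\infty)$, satisfies $\varphi(0)=0$ and $\varphi^{-1}(0)=\{0\}$ (since $\log(1+t)=0$ forces $t=0$), and is concave, as its second derivative $-(1+t)^{-2}$ is negative on $(0,\infty)$; it is also nondecreasing. Concavity together with $\varphi(0)=0$ yields subadditivity, so by Proposition~\ref{prop:subadd+inc=mt} the function $\varphi$ is a metric transform.

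Next I would check the two quantitative hypotheses. Since $\varphi$ is nondecreasing, it is $\eta$-nondecreasing with $\eta = 0$. To control $\varphi(2t)-\varphi(t)$, I would write
$$
\varphi(2t)-\varphi(t) = \log\!\left(\frac{1+2t}{1+t}\right),
$$
and then observe that $\frac{1+2t}{1+t} = 2 - \frac{1}{1+t} \le 2$ for all $t \ge 0$, so that $\varphi(2t)-\varphi(t) \le \log 2$. Thus the hypotheses of Proposition~\ref{prop:appr_log-like} are met with $\delta = \log 2$ and $\eta = 0$.

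Finally, Proposition~\ref{prop:appr_log-like} gives that $(X,d_\varphi)$ is $(\delta + 2\eta)$-ultrametric, that is $\log(2)$-ultrametric, and Proposition~\ref{prop:ultra-delta} then yields that $(X,d_\varphi)$ is $\log(2)$-hyperbolic. There is no genuine obstacle in this argument, as every step is a short verification or computation; the only point deserving any care is the monotone estimate $\frac{1+2t}{1+t} \le 2$, which is what produces the precise constant $\log(2)$ appearing in the statement.
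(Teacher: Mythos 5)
Your proposal is correct and follows essentially the same route as the paper: verify the logarithm-like bound $\varphi(2t)-\varphi(t)=\log\bigl(\tfrac{1+2t}{1+t}\bigr)\le\log(2)$ and apply Proposition~\ref{prop:appr_log-like} with $\eta=0$ followed by Proposition~\ref{prop:ultra-delta}. The paper's proof is just the one-line computation of that bound (plus the remark that it is sharp as $t\to\infty$); your additional verification that $\varphi$ is a metric transform is harmless but already noted in the surrounding text.
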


\begin{proof}
 $ \varphi(2t)-\varphi(t) = \log(1+2t)-\log(1+t) = \log\left(\frac{1+2t}{1+t}\right)<\log(2)$ for all $t\ge0$.
This inequality is sharp since $\lim_{t\to\infty}\log\left(\frac{1+2t}{1+t}\right) = \log(2).$ 
\end{proof}

Let $k\ge 0$. Recall that a {\it $k$-rough geodesic} in a metric space $(X,d)$ is a $k$-rough isometric embedding of an interval $I\subseteq \IR$ into $X$.
That is, a map $\gamma\colon I \to X$ (not necessarily continuous) such that for all $t,s\in I$, $$|t-s|-k\le d(\gamma(t),\gamma(s))\le |t-s|+k.$$ 
The space $X$ is called {\it $k$-roughly geodesic} if for every $x,y\in X$ there exists a $k$-roughly geodesic segment joining $x$ and $y$; and $X$ is called {\it roughly geodesic} if it is $k$-roughly geodesic for some $k\ge 0$.
Furthermore, we say that a metric space $(X,d)$ has the {\it $k$-rough midpoint property} if for every $x,y\in X$ there exists $z\in X$ such that $$\max\{d(x,z),d(y,z)\}\le \tfrac{1}{2}d(x,y)+k.$$
 A space has the {\it rough midpoint property} if it has the $k$-rough midpoint property for some $k\ge 0$. 
The following lemma asserts that the rough midpoint property is a necessary condition for a space to be roughly geodesic.

\begin{lemma}\label{lem:rough_geod_midp}
If $(X,d)$ is roughly geodesic then it has the rough midpoint property.
\end{lemma}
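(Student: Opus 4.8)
The plan is to take the parameter midpoint of a rough geodesic joining the two given points and show it serves as a rough midpoint. Suppose $(X,d)$ is $k$-roughly geodesic for some $k \ge 0$, and fix $x,y \in X$. By definition there is a $k$-rough geodesic segment joining $x$ and $y$, that is, a $k$-rough isometric embedding of a closed interval which, after reparametrizing so that it starts at $0$, I may take to be a map $\gamma \colon [0,L] \to X$ with $\gamma(0)=x$ and $\gamma(L)=y$. Applying the defining inequality $|t-s|-k \le d(\gamma(t),\gamma(s)) \le |t-s|+k$ at the endpoints $t=0$, $s=L$ gives $L-k \le d(x,y) \le L+k$; in particular $L \le d(x,y)+k$.

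Next I would set $z = \gamma(L/2)$, which is a well-defined point of $X$ since $L/2 \in [0,L]$. Applying the upper bound of the rough geodesic inequality to the pairs $(0,L/2)$ and $(L,L/2)$ yields
\[
d(x,z) = d(\gamma(0),\gamma(L/2)) \le \tfrac{L}{2}+k, \qquad d(y,z) = d(\gamma(L),\gamma(L/2)) \le \tfrac{L}{2}+k.
\]
Combining these estimates with $L \le d(x,y)+k$ gives
\[
\max\{d(x,z),d(y,z)\} \le \tfrac{L}{2}+k \le \tfrac{1}{2}d(x,y)+\tfrac{3k}{2},
\]
so $(X,d)$ has the $\tfrac{3k}{2}$-rough midpoint property, which is what is required.

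The argument is essentially a direct computation, so I do not expect a genuine obstacle. The only points requiring a little care are purely bookkeeping: reparametrizing the domain interval to start at $0$, observing that the parameter midpoint $L/2$ always lies in $[0,L]$ (so that $z=\gamma(L/2)$ is legitimate), and tracking the accumulation of the constant $k$ through the two applications of the rough geodesic inequality to arrive at the final constant $\tfrac{3k}{2}$.
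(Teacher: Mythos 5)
Your proof is correct and follows essentially the same route as the paper: take the parameter midpoint $z=\gamma(L/2)$ of a $k$-rough geodesic from $x$ to $y$, use $L\le d(x,y)+k$ to bound $d(x,z),d(y,z)\le \tfrac{1}{2}d(x,y)+\tfrac{3}{2}k$, and conclude the $\tfrac{3}{2}k$-rough midpoint property. The constant matches the paper's as well.
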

\begin{proof}
Assume $(X,d)$ is $k$-roughly geodesic for some $k\ge 0$. Let $x,y\in X$.
Let $\gamma\colon [0,b]\to X$ be a $k$-rough geodesic segment joining $x$ and $y$ where $b\ge 0$.
Note that $|b- d(x,y) |  \le k$.
Let $z=\gamma(\tfrac{1}{2}b)$. 
Then
\[
 d(x,z), \, d(y,z) ~\le~ \tfrac{1}{2}b +k ~\le~ \tfrac{1}{2}d(x,y) +  \tfrac{3}{2}k.
\]
Hence $X$ has the $\tfrac{3}{2}k$-rough midpoint property.
\end{proof}

Our next result asserts that an unbounded, approximately ultrametric space  cannot be roughly geodesic.

\begin{proposition}\label{prop:appr_ultra_non_geo}
If   $(X,d)$ is unbounded  and approximately ultrametric  then $X$ is \hbox{not roughly geodesic.}
\end{proposition}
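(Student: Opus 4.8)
The plan is to argue by contraposition, reducing rough geodesicity to the rough midpoint property via Lemma~\ref{lem:rough_geod_midp}. Since that lemma shows every roughly geodesic space has the rough midpoint property, it suffices to prove that an approximately ultrametric space possessing the rough midpoint property must be bounded; the unboundedness hypothesis then rules out rough geodesicity.

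Concretely, suppose $(X,d)$ is $\delta$-ultrametric and has the $k$-rough midpoint property for some $\delta, k \ge 0$. First I would fix arbitrary $x, y \in X$ and invoke the rough midpoint property to produce a point $z \in X$ with $\max\{d(x,z), d(y,z)\} \le \tfrac{1}{2} d(x,y) + k$. The key step is then to feed this same $z$ into the $\delta$-ultrametric inequality $d(x,y) \le \max\{d(x,z), d(y,z)\} + \delta$, which immediately yields $d(x,y) \le \tfrac{1}{2} d(x,y) + k + \delta$, and hence $d(x,y) \le 2(k+\delta)$. Since $x,y$ were arbitrary, this bounds the diameter by $\diam(X) \le 2(k+\delta)$.

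The conclusion follows at once: an unbounded space cannot satisfy such a uniform diameter bound, so an unbounded, approximately ultrametric space fails the rough midpoint property and, by Lemma~\ref{lem:rough_geod_midp}, cannot be roughly geodesic.

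There is essentially no analytic obstacle here; the only real decision is the strategic one of routing through the rough midpoint property rather than attempting to contradict a rough geodesic ray directly. The interesting feature is that the $\delta$-ultrametric and $k$-rough midpoint inequalities pull $d(x,y)$ in opposite directions at the same witness point $z$, and combining them collapses the distance to a constant. Thus the \emph{approximately ultrametric} and \emph{roughly geodesic} conditions are incompatible precisely because a midpoint would have to be simultaneously far from the endpoints (to interpolate along a rough geodesic) and close to at least one of them (by the near-isosceles constraint of ultrametricity).
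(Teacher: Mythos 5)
Your proof is correct and follows essentially the same route as the paper: both reduce to the rough midpoint property via Lemma~\ref{lem:rough_geod_midp}, apply the $\delta$-ultrametric inequality at the midpoint witness $z$, and derive a uniform diameter bound contradicting unboundedness. The only cosmetic difference is that the paper carries the explicit constant $\tfrac{3}{2}k$ from the lemma, yielding $d(x,y)\le 3k+2\delta$ instead of your $2(k+\delta)$.
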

\begin{proof}
Suppose $(X,d)$ is unbounded, $\delta$-ultrametric and $k$-roughly geodesic  for some given $\delta,k\ge0$.
By Lemma~\ref{lem:rough_geod_midp} and its proof, $X$ has the $\tfrac{3}{2}k$-rough midpoint property.
Thus, for any $x,y\in X$, there exists $z\in X$ such that $\max\{d(x,z),d(y,z)\}\le \tfrac{1}{2}d(x,y)+ \tfrac{3}{2}k$.
The $\delta$-ultrametric inequality (\ref{defn:delta_ultrametric}) implies that $$d(x,y)\le \max\{d(x,z),d(y,z)\}+\delta\le \tfrac{1}{2}d(x,y)+\tfrac{3}{2}k+\delta,$$
hence $d(x,y)\le 3k+2\delta$, contradicting the hypothesis that $X$ is unbounded.
\end{proof}


\section{Concave Metric Transforms of the Euclidean Half Line}\label{sec:conc_mt}

Let $\mathcal{C}$ denote the class of unbounded concave functions $\varphi\colon [0,\infty)\to[0,\infty)$ satisfying $ \lim_{t\to0^+}\varphi(t)=\varphi(0)=0$. 
Note that
if $\varphi\in\mathcal{C}$ then $\varphi$ is strictly increasing, continuous on $[0,\infty)$, and differentiable on $(0,\infty)$ except possibly at a countable number of points. 
In this section we give a simple characterization of all functions $\varphi\in \mathcal{C}$ for which the transformed Euclidean half line $([0,\infty),|\cdot|_\varphi)$ is Gromov hyperbolic (Theorem~\ref{thm:conc_mainA}).  

For $\varphi\in \mathcal{C}$, the Gromov product based at $0$ for the transformed Euclidean metric $|x-y|_\varphi = \varphi(|x-y|)$ on the half line $[0,\infty)$ is given by
\begin{equation} \label{eq:phi_prod}
(x\mid_\varphi y)=(x\mid_\varphi y)_0 = \tfrac{1}{2}[\varphi(x)+\varphi(y)-\varphi(|x-y|)].
\end{equation}
Let $\delta\ge 0$ and assume that  $(X,d_\varphi)$ is $\delta$-hyperbolic. Then
\begin{equation}\label{eq:phi_hyp}
(x\mid_\varphi y)\ge \min\{(x\mid_\varphi z),(y\mid_\varphi z)\}-\delta, \mbox{ for all } x,y,z\ge 0.
\end{equation}

We  investigate the restrictions on $\varphi$ imposed by the inequality (\ref{eq:phi_hyp}).

\begin{lemma}\label{lem:prod_monotonic} Let $\varphi\in \mathcal{C}$ and fix $a\ge 0$. The function \[ x\mapsto (a\mid_\varphi x) = \tfrac{1}{2}[\varphi(a)+\varphi(x)-\varphi(|a-x|)]\]  is strictly increasing on $[0,a]$, and decreasing on $[a,\infty)$. \end{lemma}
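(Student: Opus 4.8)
The plan is to write $g(x) := (a \mid_\varphi x) = \tfrac{1}{2}[\varphi(a)+\varphi(x)-\varphi(|a-x|)]$ and split the analysis according to the sign of $a-x$, since the absolute value is the only obstruction to treating $g$ as a smooth expression. On $[0,a]$ we have $|a-x|=a-x$, giving $g(x)=\tfrac{1}{2}[\varphi(a)+\varphi(x)-\varphi(a-x)]$, while on $[a,\infty)$ we have $|a-x|=x-a$, giving $g(x)=\tfrac{1}{2}[\varphi(a)+\varphi(x)-\varphi(x-a)]$. At $x=a$ both formulas yield $g(a)=\varphi(a)$ (using $\varphi(0)=0$), and since $\varphi$ is continuous for $\varphi\in\mathcal{C}$, so is $g$; this lets the two monotone pieces be glued at $a$.

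First I would treat $[0,a]$, where only the strict monotonicity of $\varphi$ (automatic for $\varphi\in\mathcal{C}$) is needed. For $0\le x_1<x_2\le a$,
\[
2[g(x_2)-g(x_1)] = \big(\varphi(x_2)-\varphi(x_1)\big) + \big(\varphi(a-x_1)-\varphi(a-x_2)\big).
\]
Both parenthesized terms are strictly positive because $x_1<x_2$ and $a-x_2<a-x_1$, so $g(x_2)>g(x_1)$ and $g$ is strictly increasing on $[0,a]$.

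The substantive work is on $[a,\infty)$, where concavity is essential. For $a\le x_1<x_2$,
\[
2[g(x_2)-g(x_1)] = \big(\varphi(x_2)-\varphi(x_1)\big) - \big(\varphi(x_2-a)-\varphi(x_1-a)\big),
\]
so it suffices to show the left-shifted increment dominates, namely $\varphi(x_2-a)-\varphi(x_1-a)\ge \varphi(x_2)-\varphi(x_1)$. Using the integral representation of Corollary~\ref{cor:conc_FT} together with the substitution $t\mapsto t+a$,
\[
\varphi(x_2-a)-\varphi(x_1-a) = \int_{x_1-a}^{x_2-a}\varphi'_+(t)\,dt = \int_{x_1}^{x_2}\varphi'_+(s-a)\,ds \ge \int_{x_1}^{x_2}\varphi'_+(s)\,ds = \varphi(x_2)-\varphi(x_1),
\]
where the inequality holds because $\varphi'_+$ is nonincreasing on $(0,\infty)$ by Proposition~\ref{prop:conc_diff} and $s-a\le s$. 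Hence $g(x_2)\le g(x_1)$ and $g$ is decreasing on $[a,\infty)$.

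The point requiring care, and the main obstacle, is precisely this decreasing part: unlike the increasing part it genuinely requires concavity rather than mere monotonicity of $\varphi$, encoded in the fact that equal-length increments of $\varphi$ do not grow as one moves to the right. Phrasing this through the nonincreasing right derivative $\varphi'_+$ (and the integral representation) is what neatly circumvents the countably many points at which $\varphi$ may fail to be differentiable. I note that the conclusion on $[a,\infty)$ is only non-strict, which is sharp: when $\varphi$ is linear (e.g. $\varphi(t)=t$), $g$ is constant equal to $\varphi(a)$ on $[a,\infty)$.
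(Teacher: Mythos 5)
Your proof is correct and follows essentially the same route as the paper: the same split at $x=a$, the same identity for $2[g(x_2)-g(x_1)]$ on each piece, strict monotonicity of $\varphi$ on $[0,a]$, and the same key inequality $\varphi(x_2)-\varphi(x_1)\le\varphi(x_2-a)-\varphi(x_1-a)$ on $[a,\infty)$. The only difference is cosmetic: the paper justifies that inequality by rewriting the difference as $(x_2-x_1)$ times a difference of chord slopes, which is nonpositive by the slope-monotonicity characterization of concavity, whereas you integrate the nonincreasing right derivative via Corollary~\ref{cor:conc_FT} --- both are valid, the paper's version being slightly more elementary and sidestepping the (harmless) possibility that $\varphi'_+(0)=\infty$ when $x_1=a$.
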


\begin{proof}  
If $0\le x_1< x_2\le a$\, then
\[
2(a\mid_\varphi x_2)-2(a\mid_\varphi x_1)  = \varphi(x_2)-\varphi(x_1) + \varphi(a-x_1)-\varphi(a-x_2) > 0\] since $\varphi$ is strictly increasing. Thus $x\mapsto (a\mid_\varphi x)$ is strictly increasing on $[0,a]$. 
If $0\le a\le x_1< x_2$\, then
\[2(a\mid_\varphi x_2)-2(a\mid_\varphi x_1) = (x_2-x_1)\left[\frac{\varphi(x_2)-\varphi(x_1)}{x_2-x_1} - \frac{\varphi(x_2-a)-\varphi(x_1-a)}{(x_2-a)-(x_1-a)}\right]\le 0\]  since the quantity in the square brackets is nonpositive by  concavity. Thus $x\mapsto (a\mid_\varphi x)$ is decreasing on $[a,\infty)$.
\end{proof}

By Lemma~\ref{lem:prod_monotonic},
for given $x,y\ge 0$, the minimum in the right side of  (\ref{eq:phi_hyp}) is attained at $\max\{x,y\}$ when $z\le x,y\,$ and at $\min\{x,y\}$ when $x,y\le z$. For the case when $x\le z\le y$, or $y\le z\le x$, we consider the equation $(x\mid_\varphi z)=(y\mid_\varphi z)$.
The solution of this equation is the objective of our next lemma.

 \begin{lemma}\label{lem:w}
Let $\varphi\in \mathcal{C}$. For each $0\le x< y$, there exists a unique $\omega=\omega{(x,y)}$ with $ x\le \omega\le\min\left\{\frac{x+y}{2},2x\right\}$ such that  $ (x\mid_\varphi \omega)=(y\mid_\varphi \omega).$
Moreover, $\omega(x,y)=x$  for all $0\le x\le y$
if and only if $\varphi$ is a dilation, that is,  if
$\varphi(x)=\lambda x$ for some $\lambda >0$.
 \end{lemma}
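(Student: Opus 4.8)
The plan is to study the auxiliary difference function
\[
G(\omega) = (x\mid_\varphi \omega) - (y\mid_\varphi \omega)
\]
on the interval $[x,y]$ and to show it has a unique zero lying in the required range. For $\omega\in[x,y]$ we have $|\omega - x| = \omega - x$ and $|y-\omega| = y-\omega$, so by \eqref{eq:phi_prod} the two occurrences of $\varphi(\omega)$ cancel and
\[
G(\omega) = \tfrac{1}{2}\bigl[\varphi(x) - \varphi(\omega - x) - \varphi(y) + \varphi(y - \omega)\bigr].
\]
Since $[x,\min\{\tfrac{x+y}{2},2x\}]\subseteq[x,y]$, the equation $(x\mid_\varphi \omega) = (y\mid_\varphi \omega)$ restricted to the stated range is equivalent to locating the zeros of $G$ on $[x,y]$.

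First I would establish that $G$ is \emph{strictly decreasing} on $[x,y]$, which secures uniqueness. For $x\le \omega_1 < \omega_2 \le y$ a direct computation gives
\[
G(\omega_2) - G(\omega_1) = -\tfrac{1}{2}\bigl[\varphi(\omega_2 - x) - \varphi(\omega_1 - x)\bigr] - \tfrac{1}{2}\bigl[\varphi(y - \omega_1) - \varphi(y - \omega_2)\bigr],
\]
and both bracketed differences are positive because $\varphi\in\mathcal{C}$ is strictly increasing. Notably this argument uses only monotonicity of $\varphi$, so no appeal to derivatives (and hence no worry about the countably many points of non-differentiability) is needed.

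Next I would pin down the location of the zero by evaluating $G$ at a few points. At $\omega = x$ one obtains $G(x) = \tfrac{1}{2}\bigl[\varphi(x) + \varphi(y-x) - \varphi(y)\bigr] \ge 0$, using that a nonnegative concave function vanishing at the origin is subadditive (Proposition~\ref{prop:mt-properties}$(i)$). At the midpoint, $G(\tfrac{x+y}{2}) = \tfrac{1}{2}\bigl[\varphi(x) - \varphi(y)\bigr] < 0$ since $\varphi$ is strictly increasing, and when $2x < y$ one also finds $G(2x) = \tfrac{1}{2}\bigl[\varphi(y-2x) - \varphi(y)\bigr] < 0$. Because $\varphi$ is continuous, $G$ is continuous, so the Intermediate Value Theorem together with the strict monotonicity of $G$ produces a unique $\omega\in[x,\tfrac{x+y}{2}]$ with $G(\omega)=0$; when $2x<y$ the same reasoning forces $\omega < 2x$. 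A short case split—if $y\le 2x$ then $\tfrac{x+y}{2}\le 2x$, while if $y>2x$ the bound $\omega<2x$ applies—then yields $x\le\omega\le\min\{\tfrac{x+y}{2},2x\}$ in all cases, with $G(x)\ge 0$ guaranteeing $\omega\ge x$.

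Finally, for the dilation characterization I would note that, by strict monotonicity of $G$, the identity $\omega(x,y)=x$ holds exactly when $G(x)=0$, i.e.\ when $\varphi(x)+\varphi(y-x)=\varphi(y)$. Demanding this for all $0\le x\le y$ is, after the substitution $u=x$, $v=y-x$, precisely Cauchy's additivity equation $\varphi(u)+\varphi(v)=\varphi(u+v)$ for all $u,v\ge 0$; since $\varphi$ is continuous this forces $\varphi(t)=\lambda t$, and $\lambda>0$ because $\varphi$ is unbounded. The converse is immediate, as a dilation is additive. I expect the only delicate point to be the bookkeeping needed to combine the two test points $\tfrac{x+y}{2}$ and $2x$ into the sharp bound $\min\{\tfrac{x+y}{2},2x\}$; the strict-decrease property of $G$ is what makes both existence and uniqueness, and the avoidance of derivatives, fall out cleanly.
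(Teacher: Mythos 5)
Your proposal is correct and follows essentially the same route as the paper: the paper works with $f(z)=2[(x\mid_\varphi z)-(y\mid_\varphi z)]=\varphi(x)-\varphi(y)+\varphi(y-z)-\varphi(z-x)$, establishes strict monotonic decrease (via Lemma~\ref{lem:prod_monotonic}, where you instead do the equivalent difference computation directly), checks the same sign values at $x$, $\tfrac{x+y}{2}$ and $2x$, applies the Intermediate Value Theorem, and derives the dilation characterization from Cauchy additivity plus continuity and unboundedness exactly as you do.
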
 
\begin{proof}  For $z\in [x,y]$, the equation $ (x\mid_\varphi z)=(y\mid_\varphi z)$ rewrites as
 \begin{equation}\label{eq:w}
\varphi(x)-\varphi(y)+\varphi(y-z)-\varphi(z-x)=0.
\end{equation}
 Let 
$f(z)  = 2[ (x\mid_\varphi z)-(y\mid_\varphi z)] = \varphi(x)-\varphi(y)+\varphi(y-z)-\varphi(z-x).$
\noindent By Lemma~\ref{lem:prod_monotonic}, $z\mapsto (x\mid_\varphi z)$ is decreasing and $z\mapsto (y\mid_\varphi z)$ is strictly increasing, and so the function $f(z)$ is strictly decreasing on $[x,y]$. Furthermore, \[f(x) = \varphi(x)-\varphi(y)+\varphi(y-x)\ge 0\] since $\varphi$ is subadditive, and \[f(y) = \varphi(x) - \varphi(y)-\varphi(y-x)< 0\] since $\varphi$ is strictly increasing. The function $f(z)$ is continuous and one-to-one on the interval $[x,y]$ and, by the Intermediate Value Theorem, there exists a unique $w\in [x,y]$ such that $f(\omega)=0$.
Observe that  \[f\left(\tfrac{x+y}{2}\right) = \varphi(x)-\varphi(y)\le 0\] and, if $y\ge 2x$ then \[f(2x) = \varphi(y-2x)-\varphi(y)\le 0,\] hence $ x\le \omega\le\min\left\{\tfrac{x+y}{2},2x\right\}$. 
In order to prove the last part of the lemma, assume that $\omega=x$ satisfies (\ref{eq:w}). Then $\varphi(y)=\varphi(x) +\varphi(y-x),$ for all $0\le x\le y$, which shows that $\varphi$ is additive.
Consequently,  $\varphi(rx)=r\varphi(x)$ for any nonnegative rational number $r$ and all $x\ge0$.
Since $\varphi$ is continuous, it follows that $\varphi(tx)=t\varphi(x)$                          
for all $t,x\ge0$, which shows that $\varphi$ is also homogenous. 
Thus $\varphi$ is linear and, since $\varphi$ is unbounded, $\varphi(x)=\lambda x$ for some $\lambda >0$. 
The converse is evident.
\end{proof}

\begin{lemma}\label{lem:w_C1}
Assume $\varphi\in\mathcal{C}$ is not a dilation. If $\varphi$ is of class $C^1$ on $(0,\infty)$ then the solution $\omega=\omega(x,y)$ given by Lemma~\ref{lem:w} is increasing as function of $y$.
\end{lemma}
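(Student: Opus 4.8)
The plan is to prove the monotonicity by a direct comparison argument that reduces everything to the strict monotonicity of the defining function already established in the proof of Lemma~\ref{lem:w}. Fix $x\ge 0$ and take $y_1<y_2$; write $\omega_i=\omega(x,y_i)$ and set
\[
F(y,z)=\varphi(x)-\varphi(y)+\varphi(y-z)-\varphi(z-x),
\]
so that $F(y_i,\omega_i)=0$, and for each fixed $y>x$ the map $z\mapsto F(y,z)$ is continuous and strictly decreasing on $[x,y]$ (this is precisely the function $f$ from the proof of Lemma~\ref{lem:w}). The goal is to show $\omega_1\le\omega_2$.

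The crux is to show that for each fixed $z$ the map $y\mapsto F(y,z)$ is nondecreasing. Since $\varphi$ is of class $C^1$ on $(0,\infty)$, for $z$ with $0\le z<y$ I can differentiate in $y$ to obtain
\[
\frac{\partial F}{\partial y}(y,z)=\varphi'(y-z)-\varphi'(y),
\]
and because $\varphi$ is concave its derivative $\varphi'$ is nonincreasing on $(0,\infty)$ (Proposition~\ref{prop:conc_diff}); as $y-z\le y$ this forces $\partial F/\partial y\ge 0$. Observe that only the values of $\varphi'$ at the strictly positive arguments $y$ and $y-z$ enter, so no difficulty arises from the possible singular behaviour of $\varphi'$ near $0$. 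Hence $y\mapsto F(y,z)$ is nondecreasing for each fixed $z$.

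To conclude, I apply this at $z=\omega_1$, which lies in $[x,y_1]\subseteq[x,y_2]$ and in fact satisfies $\omega_1<y_1\le y_2$ by the bound $\omega_1\le\min\{(x+y_1)/2,\,2x\}$ of Lemma~\ref{lem:w}; thus for $y\in[y_1,y_2]$ the argument $y-\omega_1$ stays positive and the differentiation above is legitimate. Monotonicity in $y$ then gives
\[
F(y_2,\omega_1)\ge F(y_1,\omega_1)=0=F(y_2,\omega_2),
\]
and since $z\mapsto F(y_2,z)$ is strictly decreasing, the inequality $F(y_2,\omega_1)\ge F(y_2,\omega_2)$ yields $\omega_1\le\omega_2$, which is the desired monotonicity.

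The only substantive point is the sign computation for $\partial F/\partial y$: the concavity of $\varphi$ (equivalently, the fact that $\varphi'$ is nonincreasing) is exactly what produces the correct sign, and the $C^1$ hypothesis is what makes the differentiation valid. The boundary possibility $\omega_1=x$ is harmless, since then $\omega_2\ge x=\omega_1$ holds automatically by the lower bound of Lemma~\ref{lem:w}. I note that the $C^1$ assumption, although available here, is not strictly essential: the same conclusion follows from comparing the increments $\varphi(y_2-\omega_1)-\varphi(y_1-\omega_1)$ and $\varphi(y_2)-\varphi(y_1)$ over intervals of equal length, using only that a concave $\varphi$ has nonincreasing increments.
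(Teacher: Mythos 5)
Your proof is correct, but it takes a genuinely different route from the paper. The paper fixes the same function $F(x,y,z)=\varphi(x)-\varphi(y)+\varphi(y-z)-\varphi(z-x)$ and invokes the Implicit Function Theorem: it checks the signs of $\partial_xF$, $\partial_yF$, $\partial_zF$ at $(x,y,\omega)$, concludes that $\omega(x,y)$ is $C^1$, and computes $\partial_y\omega=\bigl[\varphi'(y-\omega)-\varphi'(y)\bigr]/\bigl[\varphi'(y-\omega)+\varphi'(\omega-x)\bigr]$, whose sign is again governed by the monotonicity of $\varphi'$. You instead run a direct comparison: $y\mapsto F(y,z)$ is nondecreasing because $\varphi'(y-z)\ge\varphi'(y)$, so $F(y_2,\omega_1)\ge F(y_1,\omega_1)=0=F(y_2,\omega_2)$, and the strict decrease of $z\mapsto F(y_2,z)$ (already established in Lemma~\ref{lem:w}) forces $\omega_1\le\omega_2$. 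Your version is more elementary, avoids the differentiability of $\omega$ altogether, and, as you observe, does not really need the $C^1$ hypothesis since the increment comparison $\varphi(y_2-\omega_1)-\varphi(y_1-\omega_1)\ge\varphi(y_2)-\varphi(y_1)$ holds for any concave $\varphi$; the paper's version buys the extra information that $\omega$ is $C^1$ with an explicit derivative, which is not used elsewhere. The one discrepancy worth flagging is strictness: the paper asserts $\partial_y\omega>0$ (relying on $\varphi'$ being \emph{strictly} decreasing, which is not automatic for a non-dilation concave function), whereas you obtain only $\omega_1\le\omega_2$. This is harmless, since the sole application of the lemma, in Proposition~\ref{prop:w_hat}, needs only that $\omega(x,y)$ is nondecreasing in $y$ and bounded above by $2x$ to guarantee the limit $\lim_{y\to\infty}\omega(x,y)$ exists; indeed your weaker, unconditionally valid conclusion is arguably the more defensible statement.
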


\begin{proof}
 Let $ F(x,y,z)=\varphi (x)-\varphi (y)+\varphi (y-z)-\varphi (z-x)$. Then $F$ is of class $C^1$ on the open subset $\{(x,y,z)\in \IR^3\mid 0<x<z<y\}$ and by Lemma~\ref{lem:w} there exists $x<\omega<y$ such that $ F(x,y,\omega)=0$. Furthermore, since $x< \omega < \min\{\frac{x+y}{2},2x\}$ and $\varphi' $ is strictly decreasing, we have 
\begin{align*}
\partial_x F|_{(x,y,\omega)}   &= \varphi' (x)+\varphi' (\omega -x)>0,\\
\partial_y F|_{(x,y,\omega)}   &= -\varphi' (y)+\varphi' (y-\omega )>0,\\
\partial_z F|_{(x,y,\omega)}   &=  -\varphi' (y-\omega )   -  \varphi' (\omega -x) <0. 
\end{align*}
Then, by the Implicit Function Theorem the solution $\omega  = \omega (x,y)$ is of class $C^1$. Taking the derivative with respect to $y$ in $ F(x,y,\omega ) = 0$ gives \[-\varphi' (y)+\varphi' (y-\omega )(1-\partial_y\omega )-\varphi' (\omega -x)\partial_y\omega  = 0.\] Thus \[\partial_y \omega  = \frac{\varphi' (y-\omega )-\varphi' (y)}{\varphi' (y-\omega )+\varphi' (\omega -x)}> 0\] which shows that $\omega  = \omega (x,y)$ is increasing as a function of $y$ for all $0<x<y$. 
\end{proof}

\begin{proposition}\label{prop:w_hat}
Let $\varphi\in\mathcal{C}$ and let $\lambda =\lim_{t\to\infty}\varphi'_-(t)$. For each $x\ge0$ there exists a unique $\widehat{\omega}=\widehat{\omega}{(x)}$ with $x\le \widehat{\omega}\le 2x$ such that 
\begin{equation}\label{eq:w_hat}
\varphi(x)-\varphi(\widehat{\omega}-x)=\lambda \widehat{\omega}.
\end{equation}
\end{proposition}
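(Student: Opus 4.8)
The plan is to fix $x$ and reduce the statement to a single one-variable intermediate value argument. First I would dispose of the trivial case $x=0$: the constraint $x\le\widehat{\omega}\le 2x$ forces $\widehat{\omega}=0$, and then (\ref{eq:w_hat}) reads $0=0$, so $\widehat{\omega}(0)=0$ is the unique solution. For $x>0$, I would introduce the auxiliary function $g\colon[x,2x]\to\IR$ defined by
\[
g(w)=\varphi(x)-\varphi(w-x)-\lambda w,
\]
so that $\widehat{\omega}$ is exactly a zero of $g$. Since $\varphi\in\mathcal{C}$ is continuous on $[0,\infty)$ and $w-x\in[0,x]$ for $w\in[x,2x]$, the function $g$ is continuous on $[x,2x]$.

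The main input is the inequality $\varphi(x)\ge\lambda x$. I would obtain this from the integral representation $\varphi(x)=\int_0^x\varphi'_-(t)\,dt$ of Corollary~\ref{cor:conc_FT}, together with the fact that $\varphi'_-$ is nonincreasing with $\lim_{t\to\infty}\varphi'_-(t)=\lambda$ (Remark~\ref{rem:phi'_lim}), whence $\varphi'_-(t)\ge\lambda$ for all $t>0$ and the integral is at least $\lambda x$. Evaluating $g$ at the endpoints then yields $g(x)=\varphi(x)-\lambda x\ge 0$ and $g(2x)=-2\lambda x\le 0$. By the Intermediate Value Theorem, $g$ has a zero $\widehat{\omega}\in[x,2x]$ satisfying (\ref{eq:w_hat}).

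For uniqueness I would verify that $g$ is strictly decreasing on $[x,2x]$. For $x\le w_1<w_2\le 2x$,
\[
g(w_1)-g(w_2)=\bigl[\varphi(w_2-x)-\varphi(w_1-x)\bigr]+\lambda(w_2-w_1),
\]
and both summands are nonnegative because $\varphi$ is nondecreasing and $\lambda\ge 0$. Moreover the bracketed term is strictly positive, since $\varphi$ is strictly increasing on $[0,\infty)$ and $0\le w_1-x<w_2-x$; hence $g(w_1)>g(w_2)$ and the zero of $g$ is unique. Note that this handles the degenerate case $\lambda=0$ uniformly: there $g(2x)=0$ forces $\widehat{\omega}=2x$, and strict monotonicity still rules out a second root.

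The argument is elementary, and the only step requiring genuine care is the endpoint sign $g(x)\ge 0$, equivalently $\varphi(x)\ge\lambda x$, which I would derive from the monotonicity of $\varphi'_-$ rather than from pointwise differentiability (recall $\varphi$ need only be differentiable off a countable set). I note that $\widehat{\omega}(x)$ may alternatively be realized as $\lim_{y\to\infty}\omega(x,y)$, the limit of the solutions produced by Lemma~\ref{lem:w}; proving that $y\mapsto\omega(x,y)$ is monotone for general $\varphi\in\mathcal{C}$—via the $C^1$ approximation of Corollary~\ref{cor:conc_approx} together with Lemma~\ref{lem:w_C1}—would give a second, less direct, route to the same existence and uniqueness claim.
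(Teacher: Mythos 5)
Your proof is correct, and it is genuinely more direct than the paper's. You reduce everything to a one-variable Intermediate Value Theorem argument for $g(w)=\varphi(x)-\varphi(w-x)-\lambda w$ on $[x,2x]$: the endpoint signs $g(x)=\varphi(x)-\lambda x\ge 0$ (correctly derived from $\varphi'_-(t)\ge\lambda$ via Corollary~\ref{cor:conc_FT}) and $g(2x)=-2\lambda x\le 0$ give existence, and strict monotonicity of $g$ (from the strict increase of $\varphi$, Corollary~\ref{cor:phi_global}) gives uniqueness; the cases $x=0$ and $\lambda=0$ are handled cleanly. The paper instead constructs $\widehat{\omega}(x)$ as $\lim_{y\to\infty}\omega(x,y)$, where $\omega(x,y)$ is the solution from Lemma~\ref{lem:w}; to justify that this limit exists it invokes the $C^1$ concave approximation of Corollary~\ref{cor:conc_approx}, the monotonicity in $y$ from Lemma~\ref{lem:w_C1}, and a Mean Value Theorem passage to the limit. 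Your route avoids all of that machinery. What the paper's heavier argument buys is the identification $\widehat{\omega}(x)=\lim_{y\to\infty}\omega(x,y)$ itself, which is not part of the statement but is used essentially in the proof of Proposition~\ref{prop:phi_half_line} (Cases 3(a) and 3(b) take $y\to\infty$ in inequalities involving $\omega(x,y)$ and land on $\widehat{\omega}$). So if you adopt your proof, that convergence would still need to be established separately before the later argument goes through --- a point you implicitly acknowledge in your closing remark about the alternative realization of $\widehat{\omega}$.
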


\begin{proof} Note that if $\varphi(x)=\lambda x$ then by Lemma~\ref{lem:w}, $\widehat{\omega} = \omega = x$. For the remainder of the proof we assume that $\varphi$ is not a dilation. Fix $x\ge 0$. For each $y>x$, let $\omega=\omega(x,y)$ be the solution of \[f(\omega)=\varphi(x)-\varphi(y)+\varphi(y-\omega)-\varphi(\omega-x)=0\]
given by Lemma~\ref{lem:w}. We show that $ \widehat{\omega}=\widehat{\omega}(x) = \lim_{y\to\infty}\omega(x,y)$.
To prove that this limit exists, we use of the uniform approximation of $\varphi$ given by Corollary~\ref{cor:conc_approx}.
For this, let $\varepsilon>0$ and let $\psi_\varepsilon\colon [0,\infty)\to[0,\infty)$ be a continuous concave function, which is of class $C^1$ on $(0,\infty)$ and satisfies $\psi_\varepsilon(0)=0$ and $|\varphi- \psi_\varepsilon|\le\varepsilon$.  Note that $\lim_{t\to\infty}\psi_\varepsilon'(t) = \lim_{t\to\infty}\varphi_-'(t)=\lambda$.
We can assume that $\psi_\varepsilon$ is not linear (not a dilation), for otherwise if $\psi_\varepsilon$ were linear for  arbitrarily small $\varepsilon$ then $\varphi$ would be a dilation.
\linebreak 
By Lemma~\ref{lem:w_C1}, there exists a unique $\upsilon_\varepsilon = \upsilon_\varepsilon(x,y)$ such that $x< \upsilon_\varepsilon< \min\{\frac{x+y}{2},2x\}$ and satisfying \[g_\varepsilon(\upsilon_\varepsilon) = \psi_\varepsilon(x)-\psi_\varepsilon(y)+\psi_\varepsilon(y-\upsilon_\varepsilon)-\psi_\varepsilon(\upsilon_\varepsilon-x)=0.\]
 Let $\widehat{\upsilon}_\varepsilon =\widehat{\upsilon}_\varepsilon(x) = \lim_{y\to\infty}\upsilon_\varepsilon(x,y).$
This limit exists because $\upsilon_\varepsilon$ is increasing as a function of $y$ and it is bounded from above by $2x$ as $y\to\infty$.
Taking the limit as $y\to\infty$ in the expression $g_\varepsilon(\upsilon_\varepsilon) =0$ yields
\begin{align*}
0 & = \lim_{y\to\infty} [\psi_\varepsilon (x)-\psi_\varepsilon (y)+\psi_\varepsilon (y-\upsilon_\varepsilon )-\psi_\varepsilon (\upsilon_\varepsilon -x)]\\
& = \lim_{y\to\infty}[\psi_\varepsilon (x)-\psi_\varepsilon (\upsilon_\varepsilon -x) -\upsilon_\varepsilon  \psi_\varepsilon' (\eta_\varepsilon)] \\ 
& = \psi_\varepsilon (x)-\psi_\varepsilon (\widehat{\upsilon}_\varepsilon-x) - \lambda\,\widehat{\upsilon}_\varepsilon
\end{align*}
where $y-\upsilon_\varepsilon  <\eta_\varepsilon<y$ is given by the Mean Value Theorem, and $\eta_\varepsilon\to\infty$ as $y\to \infty$. 
Since $|\varphi- \psi_\varepsilon|\le\varepsilon$, we have that $| g_\varepsilon-f|\le 4\varepsilon$ and in particular, $|f(\omega)-f(\upsilon_\varepsilon)|=| f(\upsilon_\varepsilon)|\le 4\varepsilon$. 
Taking the limit as $\varepsilon\to0$ and using the fact that $f$ is one-to-one on $[x,y]$, it follows that $\omega(x,y)=\lim_{\varepsilon\to0}\upsilon_\varepsilon(x,y)$.  
Taking the limit as $y\to\infty$ gives \[\widehat{\omega} = \lim_{\varepsilon\to0}\widehat{\upsilon}_\varepsilon = \lim_{y\to\infty}\omega(x,y)\] satisfying
$\varphi(x)-\varphi(\widehat{\omega}-x)-\lambda \widehat{\omega} =0.$ \end{proof}

The above observations allow us to show the following.

\begin{proposition}\label{prop:phi_half_line}
Let $\delta\ge0$ and let $\varphi\in\mathcal{C}$ be such that $([0,\infty),|\cdot|_\varphi)$ is $\delta$-hyperbolic. 
Then $\varphi$ satisfies 
\begin{equation}\label{eq:phi_hyp_cond}
\varphi(\widehat{w})-\varphi(\widehat{w}-x)\le \lambda x + 2\delta
\end{equation} for all $x\ge 0$, where $\lambda =\lim_{t\to\infty}\varphi'_-(t)$ and $\widehat{w}=\widehat{w}{(x)}$, $x\le \widehat{w}\le 2x$ is the unique solution of 
\begin{equation}\label{eq:w_hat_implicit}
\varphi(x)-\varphi(\widehat{w}-x)=\lambda \widehat{w}.
\end{equation}
\end{proposition}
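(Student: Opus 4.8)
The plan is to apply the $\delta$-hyperbolicity inequality (\ref{eq:phi_hyp}) at the single distinguished point $z=\omega(x,y)$ supplied by Lemma~\ref{lem:w}, extract a clean inequality for $\varphi$, and then let $y\to\infty$ using the convergence $\omega(x,y)\to\widehat{w}(x)$ from Proposition~\ref{prop:w_hat}. First I would dispose of the degenerate cases. If $x=0$ then $\widehat w=0$ and (\ref{eq:phi_hyp_cond}) reduces to $0\le 2\delta$; if $\varphi$ is a dilation then $\widehat w=x$ by Lemma~\ref{lem:w} and (\ref{eq:phi_hyp_cond}) reads $\lambda x\le \lambda x+2\delta$. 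So I may assume $x>0$ and that $\varphi$ is not a dilation.

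Fix such an $x$ and take any $y>x$. Recall from the discussion following Lemma~\ref{lem:prod_monotonic} that on $[x,y]$ the map $z\mapsto(x\mid_\varphi z)$ is decreasing while $z\mapsto(y\mid_\varphi z)$ is increasing, and by Lemma~\ref{lem:w} these agree precisely at $z=\omega=\omega(x,y)$. Hence at this admissible choice of $z$ the two Gromov products coincide, so $\min\{(x\mid_\varphi\omega),(y\mid_\varphi\omega)\}=(x\mid_\varphi\omega)$, and (\ref{eq:phi_hyp}) gives $(x\mid_\varphi y)\ge (x\mid_\varphi\omega)-\delta$. Writing both products via (\ref{eq:phi_prod}), and using $|x-y|=y-x$ together with $|x-\omega|=\omega-x$ (valid since $x\le\omega\le y$), the term $\varphi(x)$ cancels and this rearranges to
\[
\varphi(\omega)-\varphi(\omega-x)\le \varphi(y)-\varphi(y-x)+2\delta .
\]

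The final step is to send $y\to\infty$ in this inequality, which holds for every $y>x$. By Proposition~\ref{prop:w_hat} we have $\omega(x,y)\to\widehat{w}(x)$ as $y\to\infty$ (the quantity denoted $\widehat\omega$ there is, by its defining equation (\ref{eq:w_hat}), the same as the $\widehat w$ of (\ref{eq:w_hat_implicit})), so by continuity of $\varphi$ the left side tends to $\varphi(\widehat w)-\varphi(\widehat w-x)$. For the right side, Corollary~\ref{cor:conc_FT} gives $\varphi(y)-\varphi(y-x)=\int_{y-x}^{y}\varphi'_-(t)\,dt$; since $\varphi'_-$ is nonincreasing (Proposition~\ref{prop:conc_diff}) with $\lim_{t\to\infty}\varphi'_-(t)=\lambda$ (Remark~\ref{rem:phi'_lim}), this integral stays $\ge\lambda x$ and decreases to $\lambda x$ as the interval $[y-x,y]$ slides to infinity. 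Passing to the limit yields exactly (\ref{eq:phi_hyp_cond}).

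I expect the only real care to be bookkeeping rather than any genuinely new estimate. The two points requiring attention are (a) confirming that at $z=\omega$ the minimum in (\ref{eq:phi_hyp}) is realized by $(x\mid_\varphi\omega)$, so that the inequality extracted is the sharp one, and (b) justifying that the two limits $\omega(x,y)\to\widehat w$ and $\varphi(y)-\varphi(y-x)\to\lambda x$ may be taken simultaneously in an inequality that is valid for \emph{every} $y$. Both are delivered directly by the cited results, so the argument should be short.
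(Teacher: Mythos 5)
Your proof is correct and follows essentially the same route as the paper's: the paper also reduces to the single test point $z=\omega(x,y)$ (via the monotonicity in Lemma~\ref{lem:prod_monotonic}, after a case analysis on the position of $z$ that you rightly observe is unnecessary for a purely necessary condition) and then passes to the limit $y\to\infty$ using $\omega(x,y)\to\widehat{\omega}(x)$ from Proposition~\ref{prop:w_hat} and $\varphi(y)-\varphi(y-x)\to\lambda x$. Your inequality $\varphi(\omega)-\varphi(\omega-x)\le\varphi(y)-\varphi(y-x)+2\delta$ is exactly the paper's Case 3(b) evaluated at $z=\omega$, so the only difference is a harmless streamlining.
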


\begin{proof}  Let $\delta$ and $\varphi$ be as in the statement of the proposition. Recall that if the space $([0,\infty),|\cdot|_\varphi)$ is $\delta$-hyperbolic then $\varphi$ satisfies the inequality (\ref{eq:phi_hyp}) \[(x\mid_\varphi y)\ge \min\{(x\mid_\varphi z),(y\mid_\varphi z)\}-\delta\] for all $x,y,z\ge 0.$  We show that this condition implies (\ref{eq:phi_hyp_cond}). 
Without loss of generality, we assume that $0\le x\le y$. Then there are three possible cases for $z.$
\vskip.2cm
\noindent {\it Case 1}. Assume $z\le x\le y.$ By Lemma~\ref{lem:prod_monotonic}, $(z\mid_\varphi y)\le (z\mid_\varphi x)$ and $(x\mid_\varphi y) \ge (z\mid_\varphi y)$. 
Hence,   $(x\mid_\varphi y)\ge \min\{(x\mid_\varphi z),(y\mid_\varphi z)\}$,  that is, the condition (\ref{eq:phi_hyp}) holds with $\delta=0$ and for all $\varphi$.
\vskip.2cm
\noindent {\it Case 2}. Assume $x\le y\le z.$ By Lemma~\ref{lem:prod_monotonic}, $(x\mid_\varphi z)\le (x\mid_\varphi y)$ and $(y\mid_\varphi z)\le (x\mid_\varphi y)$, which implies that $(x\mid_\varphi y)\ge \min\{(x\mid_\varphi z),(y\mid_\varphi z)\}.$ As before, the condition (\ref{eq:phi_hyp}) holds with $\delta=0$ and for all $\varphi$.
\vskip.2cm

\noindent {\it Case 3}. Assume $x\le z\le y$. Let $\omega=\omega{(x,y)}$ be the unique value $x\le \omega\le \min\left\{\frac{x+y}{2},2x\right\}$ satisfying (\ref{eq:w}) as given by Lemma~\ref{lem:w}.

Consider the following two possible situations. 

\noindent {\it Case 3(a)}. Assume $x\le z\le \omega\le y$. Then $\min\{(x\mid_\varphi z),(y\mid_\varphi z)\} = (y\mid_\varphi z)$ and the inequality (\ref{eq:phi_hyp}) becomes $(x\mid_\varphi y)\ge (y\mid_\varphi z)-\delta,$ or equivalently  
\begin{equation*}
\varphi(z)-\varphi(x) +\varphi(y-x)-\varphi(y-z)\le 2\delta.
\end{equation*}  
For $z\in [x,\omega]$, let $g(z) = 2[(y\mid_\varphi z) - (y\mid_\varphi x)] = \varphi(z)-\varphi(x)+\varphi(y-x)-\varphi(y-z).$ From Lemma~\ref{lem:prod_monotonic}, the function $g(z)$ is increasing on $[x,\omega]$, and hence $\max_{z\in [x,\omega]} g(z) = g(\omega).$ Thus it suffices to find conditions on $\varphi$ such that 
\begin{equation*}\label{eq:phi_c3.1}
 g(\omega) = 
 \varphi(\omega)-\varphi(x)+\varphi(y-x)-\varphi(y-\omega)\le 2\delta 
\end{equation*} 
for all $0\le x\le y$. Taking the limit as $y\to\infty$ in the above inequality and letting  $\lambda =\lim_{t\to\infty}\varphi'_-(t)$ yields
\begin{equation*}\label{eq:w_hat_delta}
\varphi(\widehat{\omega})-\varphi(x)+\lambda (\widehat{\omega}-x)\le 2\delta
\end{equation*} where $\widehat\omega=\widehat{\omega}(x)$ is given by Proposition~\ref{prop:w_hat}.
Combining with (\ref{eq:w_hat}), this gives 
\begin{equation*}\label{eq:w_hat_hyp}
\varphi(\widehat{\omega})-\varphi(\widehat{\omega}-x)\le \lambda x+2\delta.
\end{equation*}

\noindent {\it Case 3(b)}. Assume $x\le \omega\le z\le y$. Then $\min\{(x\mid_\varphi z),(y\mid_\varphi z)\} = (x\mid_\varphi z)$ and the inequality (\ref{eq:phi_hyp}) becomes $(x\mid_\varphi y)\ge (x\mid_\varphi z)-\delta,$ 
or equivalently  
\begin{equation*}
\varphi(z)-\varphi(y) +\varphi(y-x)-\varphi(z-x)\le 2\delta.
\end{equation*}  
For $z\in [w,y]$, let  $h(z) = 2[(x\mid_\varphi z) - (y\mid_\varphi x)] = \varphi(z)-\varphi(y)+\varphi(y-x)-\varphi(z-x).$ By Lemma~\ref{lem:prod_monotonic}, the function $h(z)$ is decreasing on $[\omega,y]$ and since $\max_{z\in [\omega,y]} h(z) = h(\omega)$ it suffices to find conditions on $\varphi$ such that 
\begin{equation*}\label{eq:phi_c3.2}
 h(\omega) =  
 \varphi(\omega)-\varphi(y)+\varphi(y-x)-\varphi(\omega-x)\le 2\delta 
\end{equation*} 
for all $0\le x\le y$. Taking the limit as $y\to\infty$  yields 
\begin{equation*}
\varphi(\widehat{\omega})-\varphi(\widehat{\omega}-x)-\lambda x\le 2\delta
\end{equation*} where, as before, $\widehat\omega=\widehat{\omega}(x)$ 
is given by Proposition~\ref{prop:w_hat}.
\end{proof}

As noted in Remark~\ref{rem:phi'_lim}, if $\lambda =\lim_{x\to\infty}\varphi'_-(x)$ then $\lambda \ge0$, and as we will next see the cases $\lambda =0$ and $\lambda >0$ define mutually disjoint classes of functions.

Consider first the case $\lambda =0$. Then, from (\ref{eq:w_hat_implicit}), $\varphi(x)=\varphi(\widehat{\omega}-x)$ and since $\varphi$ is one-to-one, this implies that $\widehat{\omega}=2x$. In this case, the condition (\ref{eq:phi_hyp_cond}) becomes $\varphi(2x)-\varphi(x)\le 2\delta,$ and we have the following.

\begin{corollary}\label{cor:phi_hyp_0}
Let $\delta\ge0$.
Let $\varphi\in\mathcal{C}$ be such that $ \lim_{x\to\infty}\varphi'_-(x)=0$ and $([0,\infty),|\cdot|_\varphi)$ is $\delta$-hyperbolic.  Then for all $x\ge 0$,
\begin{equation}\label{eq:phi_hyp_0}
\varphi(2x)-\varphi(x)\le 2\delta.
\end{equation}
\end{corollary}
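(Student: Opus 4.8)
The plan is to obtain the result as the $\lambda = 0$ specialization of the two propositions just established, Proposition~\ref{prop:w_hat} and Proposition~\ref{prop:phi_half_line}. The only real content is to pin down the value of $\widehat{\omega}(x)$ in this degenerate case and then substitute it into the hyperbolicity estimate; everything else is already done.

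First I would identify $\widehat{\omega}(x)$ explicitly. By Proposition~\ref{prop:w_hat} the defining relation (\ref{eq:w_hat_implicit}) reads $\varphi(x) - \varphi(\widehat{\omega} - x) = \lambda\widehat{\omega}$, and under the hypothesis $\lambda = 0$ this collapses to $\varphi(x) = \varphi(\widehat{\omega} - x)$. Since every $\varphi \in \mathcal{C}$ is strictly increasing, hence injective, I can cancel $\varphi$ to conclude $x = \widehat{\omega} - x$, that is, $\widehat{\omega} = 2x$. This value lies in the interval $[x, 2x]$ guaranteed by Proposition~\ref{prop:w_hat}, and by the uniqueness asserted there it is precisely the solution that proposition produces, so no ambiguity arises.

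Next I would invoke the hyperbolicity estimate. Proposition~\ref{prop:phi_half_line} gives, for every $x \ge 0$,
\[
\varphi(\widehat{w}) - \varphi(\widehat{w} - x) \le \lambda x + 2\delta,
\]
with $\widehat{w} = \widehat{w}(x)$ the same quantity determined above. Setting $\widehat{w} = 2x$ and $\lambda = 0$ turns the left-hand side into $\varphi(2x) - \varphi(x)$ and the right-hand side into $2\delta$, which is exactly the asserted inequality (\ref{eq:phi_hyp_0}).

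I expect essentially no obstacle here: all the genuinely analytic work has already been carried out, namely the existence and monotonicity arguments behind $\widehat{\omega}$ in Proposition~\ref{prop:w_hat} and the limiting passage from the four-point condition to (\ref{eq:phi_hyp_cond}) in Proposition~\ref{prop:phi_half_line}. The one step worth stating carefully is the injectivity cancellation that forces $\widehat{\omega} = 2x$; this is precisely where the hypothesis $\lambda = 0$ does its work, collapsing the implicit defining equation into an equality between two values of the strictly increasing function $\varphi$.
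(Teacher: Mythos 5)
Your proposal is correct and follows exactly the paper's own argument: the authors likewise set $\lambda=0$ in (\ref{eq:w_hat_implicit}), use that $\varphi\in\mathcal{C}$ is strictly increasing (hence injective) to conclude $\widehat{\omega}=2x$, and then substitute into (\ref{eq:phi_hyp_cond}) to obtain $\varphi(2x)-\varphi(x)\le 2\delta$. Nothing is missing.
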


Condition (\ref{eq:phi_hyp_0}) is equivalent to $\varphi'_-(x)\le M/x$ for all $x>0$, where $M\ge 0$ is a constant depending on $\delta$.
We say that a function  satisfying  (\ref{eq:phi_hyp_0}) is {\it logarithm-like}.
The constants $\delta$ and $M$ are related as follows.

\begin{proposition}[Characterization of logarithm-like concave functions]  
Let $\varphi\in\mathcal{C}$.  Then
\begin{itemize}
\item[$(i)$] If $\varphi(2x)-\varphi(x)\le 2\delta$ for all $x\ge0$\, then $\varphi'_-(x)\le  4\delta/x$ for all $x>0$,
\item[$(ii)$] If $\varphi'_-(x)\le  M/x$ for all $x>0$\, then $\varphi(2x)-\varphi(x)\le M\log(2)$ for all $x\ge0$.
\end{itemize}
\end{proposition}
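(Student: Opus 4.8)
The plan is to pass between the finite-difference condition $\varphi(2x)-\varphi(x)\le 2\delta$ and the pointwise derivative bound $\varphi'_-(x)\le M/x$ using the Fundamental Theorem of Calculus for concave functions (Corollary~\ref{cor:conc_FT}) together with the fact that the one-sided derivative $\varphi'_-$ is nonincreasing on $(0,\infty)$ (Proposition~\ref{prop:conc_diff}). Both directions reduce to estimating an integral of $\varphi'_-$ over a dyadic interval, with monotonicity of $\varphi'_-$ supplying the crucial one-sided bound.

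For part $(ii)$, I would first note the trivial case $x=0$, where both sides vanish. For $x>0$, Corollary~\ref{cor:conc_FT} gives $\varphi(2x)-\varphi(x)=\int_x^{2x}\varphi'_-(t)\,dt$, and bounding the integrand by the hypothesis $\varphi'_-(t)\le M/t$ yields $\varphi(2x)-\varphi(x)\le\int_x^{2x}\frac{M}{t}\,dt=M\bigl(\log(2x)-\log x\bigr)=M\log 2$, which is exactly the desired inequality.

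For part $(i)$, the key step is to evaluate the hypothesis at the point $x/2$ rather than at $x$: substituting $x/2$ into $\varphi(2x)-\varphi(x)\le 2\delta$ gives $\varphi(x)-\varphi(x/2)\le 2\delta$. Again by Corollary~\ref{cor:conc_FT}, $\varphi(x)-\varphi(x/2)=\int_{x/2}^{x}\varphi'_-(t)\,dt$. Since $\varphi'_-$ is nonincreasing by Proposition~\ref{prop:conc_diff}, on $[x/2,x]$ we have $\varphi'_-(t)\ge\varphi'_-(x)$, so the integral is bounded below by $\frac{x}{2}\varphi'_-(x)$. Combining these estimates gives $\frac{x}{2}\varphi'_-(x)\le 2\delta$, which rearranges to $\varphi'_-(x)\le 4\delta/x$ for all $x>0$.

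I do not expect a genuine obstacle here; the proof is short once the right interval is chosen. The only point requiring care is precisely that choice: one must apply the hypothesis at $x/2$ and integrate over $[x/2,x]$, so that the monotonicity of $\varphi'_-$ produces a \emph{lower} bound $(x/2)\varphi'_-(x)$ on the increment. Evaluating at $x$ and integrating over $[x,2x]$ instead would bound $\varphi'_-(x)$ from below, which is the wrong direction for the intended estimate.
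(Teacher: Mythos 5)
Your proof is correct and follows essentially the same route as the paper: part $(ii)$ is identical (integrate $\varphi'_-(t)\le M/t$ over $[x,2x]$ via Corollary~\ref{cor:conc_FT}), and part $(i)$ differs only cosmetically, since your lower bound $\int_{x/2}^{x}\varphi'_-(t)\,dt\ge\tfrac{x}{2}\varphi'_-(x)$ is exactly the chord-slope inequality $\varphi'_-(y)\le\frac{\varphi(y)-\varphi(x)}{y-x}$ from Proposition~\ref{prop:conc_diff} that the paper applies directly (at the pair $(x,2x)$, then rescaling $x\mapsto x/2$). No gaps.
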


\begin{proof}
\noindent$(i)$ If $x>0$\, then by Proposition~\ref{prop:conc_diff},  \[\varphi'_-(2x)\le\frac{\varphi(2x)-\varphi(x)}{2x-x}\le\frac{2\delta}{x}\]
 or equivalently, $\varphi'_-(x)\le 4\delta/x, \mbox{ for all } x>0.$\\
\noindent$(ii)$ If $\varphi'_-(t)\le M/t$ for all $t>0$\, then, by Corollary~\ref{cor:conc_FT}, integration over $[x,2x]$ with $x>0$ yields $\varphi(2x)-\varphi(x)\le M\log(2)$.
\end{proof}

We now consider the case $\lambda > 0$. In this case (\ref{eq:phi_hyp_cond}) together with (\ref{eq:w_hat_implicit}) implies that \[\varphi(\widehat{\omega})-\varphi(x)\le \lambda (x-\widehat{\omega})+2\delta\] and since $\varphi$ is increasing and $x\le\widehat{\omega}$, this yields $\lambda (x-\widehat{\omega})+2\delta\ge 0$, or equivalently $0\le \widehat{\omega}-x\le 2\delta/\lambda .$
Together with (\ref{eq:phi_hyp_cond}) this gives \[\varphi(\widehat{\omega})\le \lambda x +\varphi(\widehat{\omega}-x)+2\delta \le \lambda x + \varphi(2\delta/\lambda )+2\delta\] which implies \[\varphi(x)\le \lambda x+\varphi(2\delta/\lambda )+2\delta.\]
Furthermore, since $\varphi$ is concave, the condition $\lambda >0$ implies that $\lambda x\le \varphi(x)$ for all $x\ge0$. Thus we have the following.

\begin{corollary}\label{cor:phi_hyp_>0}
Let $\delta\ge0$. Let  $\varphi\in\mathcal{C}$ be such that $ \lambda =\lim_{x\to\infty}\varphi'_-(x)>0$ and  $([0,\infty),|\cdot|_\varphi)$ is $\delta$-hyperbolic. Then  for all $x\ge 0$ 
\begin{equation}\label{eq:phi_hyp_>0}
\lambda x\le\varphi(x)\le \lambda x+\varphi(2\delta/\lambda )+2\delta.
\end{equation}
\end{corollary}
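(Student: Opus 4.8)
The plan is to establish the two inequalities in (\ref{eq:phi_hyp_>0}) separately: the lower bound will come purely from concavity, while the upper bound will be extracted from the hyperbolicity constraint already packaged in Proposition~\ref{prop:phi_half_line}.

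For the lower bound $\lambda x \le \varphi(x)$, I would argue as follows. By Proposition~\ref{prop:conc_diff} the left derivative $\varphi'_-$ is nonincreasing on $(0,\infty)$, so by Remark~\ref{rem:phi'_lim} its infimum is its limit $\lambda$, whence $\varphi'_-(t) \ge \lambda$ for all $t>0$. Integrating this bound over $[0,x]$ using the Fundamental Theorem of Calculus for concave functions (Corollary~\ref{cor:conc_FT}) and the normalization $\varphi(0)=0$ gives $\varphi(x) = \int_0^x \varphi'_-(t)\,dt \ge \lambda x$. This step is routine.

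For the upper bound, the key is to play the two relations furnished by Proposition~\ref{prop:phi_half_line} against each other, namely the hyperbolicity inequality (\ref{eq:phi_hyp_cond}) and the defining identity (\ref{eq:w_hat_implicit}) for $\widehat{w}=\widehat{w}(x)$. First I would subtract (\ref{eq:w_hat_implicit}) from (\ref{eq:phi_hyp_cond}); the terms $\varphi(\widehat{w}-x)$ cancel, leaving $\varphi(\widehat{w}) - \varphi(x) \le \lambda(x-\widehat{w}) + 2\delta$. Since $\varphi$ is increasing and $\widehat{w}\ge x$, the left side is nonnegative, which forces $\lambda(\widehat{w}-x) \le 2\delta$, and hence (using $\lambda>0$) the crucial bound $0 \le \widehat{w}-x \le 2\delta/\lambda$. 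This uniform control on how far $\widehat{w}$ can stray above $x$ is the heart of the argument.

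With this in hand I would return to (\ref{eq:phi_hyp_cond}), rewrite it as $\varphi(\widehat{w}) \le \lambda x + \varphi(\widehat{w}-x) + 2\delta$, and invoke monotonicity of $\varphi$ together with $\widehat{w}-x \le 2\delta/\lambda$ to replace $\varphi(\widehat{w}-x)$ by $\varphi(2\delta/\lambda)$, yielding $\varphi(\widehat{w}) \le \lambda x + \varphi(2\delta/\lambda) + 2\delta$. Finally, since $\widehat{w}\ge x$ and $\varphi$ is increasing, $\varphi(x) \le \varphi(\widehat{w})$, which closes the upper bound. I anticipate no serious obstacle beyond the bookkeeping of the two monotonicity applications; the one genuinely delicate point is recognizing that the nonnegativity of $\varphi(\widehat{w})-\varphi(x)$ is precisely what turns (\ref{eq:phi_hyp_cond}) into a uniform bound on $\widehat{w}-x$, which in turn decouples the linear $\lambda x$ growth from the bounded correction term $\varphi(2\delta/\lambda)+2\delta$.
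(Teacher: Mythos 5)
Your proposal is correct and follows essentially the same route as the paper: combining (\ref{eq:phi_hyp_cond}) with (\ref{eq:w_hat_implicit}) to get $\varphi(\widehat{w})-\varphi(x)\le\lambda(x-\widehat{w})+2\delta$, using monotonicity to force $0\le\widehat{w}-x\le 2\delta/\lambda$, and then feeding that back in for the upper bound, with the lower bound coming from concavity. The only cosmetic difference is that you spell out the lower bound via $\varphi'_-\ge\lambda$ and Corollary~\ref{cor:conc_FT}, where the paper simply cites concavity.
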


\begin{remark}\label{rem:psi}
If a function $\varphi\in \mathcal{C}$ satisfies the conditions of the preceding corollary then $\varphi(x) = \lambda x + f(x)$, where $f\colon[0,\infty)\to[0,\infty)$ is a bounded continuous concave function satisfying $f(0)=0$ and $0\le f(x)\le \varphi(2\delta/\lambda )+2\delta$ for all $x\ge0$. In particular, $\varphi$ is a  $(\lambda ,k)$-approximate dilation with $k=\varphi(2\delta/\lambda )+2\delta$.
\end{remark}

Consequently, we obtain the following characterization of unbounded continuous concave functions $\varphi\colon[0,\infty)\to[0,\infty)$ satisfying $\varphi(0)=0$ for which the transformed Euclidean metric $|x-y|_\varphi = \varphi(|x-y|)$ on $[0,\infty)$ is Gromov hyperbolic.

\begin{theorem}\label{thm:conc_mainA}
Let  $\varphi\in\mathcal{C}$ and let $\lambda =\lim_{x\to\infty}\varphi'_-(x)$. The transformed Euclidean half line \hbox{$([0,\infty),|\cdot|_\varphi)$} is Gromov hyperbolic if and only if $\varphi$ has of one of the following forms:
\begin{itemize}
\item[$(i)$] $\lambda >0$ and $\varphi(x) = \lambda x+f(x)$, where $f$  is a  nonnegative, bounded, continuous concave function satisfying $f(0)=0$, or

\item[$(ii)$] $\lambda =0$ and $\varphi(2x)-\varphi(x)$ is bounded.
\end{itemize}
\end{theorem}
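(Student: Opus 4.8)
The plan is to prove Theorem~\ref{thm:conc_mainA} by assembling the one-sided implications already established in the corollaries and remarks above, and then supplying the reverse direction using the results of Section~\ref{sec:Gromov_hyperbolic}. The two cases $\lambda>0$ and $\lambda=0$ are mutually exclusive by Remark~\ref{rem:phi'_lim}, so it suffices to treat each separately and verify that together they exhaust all Gromov hyperbolic possibilities.

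\smallskip
\noindent\textbf{Forward direction (hyperbolic implies $(i)$ or $(ii)$).}
First I would suppose $([0,\infty),|\cdot|_\varphi)$ is $\delta$-hyperbolic for some $\delta\ge 0$ and split on the sign of $\lambda$. When $\lambda=0$, Corollary~\ref{cor:phi_hyp_0} gives directly that $\varphi(2x)-\varphi(x)\le 2\delta$, so $\varphi(2x)-\varphi(x)$ is bounded, which is exactly condition $(ii)$. When $\lambda>0$, Corollary~\ref{cor:phi_hyp_>0} yields $\lambda x\le\varphi(x)\le\lambda x+\varphi(2\delta/\lambda)+2\delta$ for all $x\ge 0$. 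Setting $f(x)=\varphi(x)-\lambda x$, I would invoke Remark~\ref{rem:psi}: $f$ is continuous (since $\varphi$ is), concave (the difference of the concave $\varphi$ and the linear $\lambda x$), satisfies $f(0)=0$, and is bounded by $\varphi(2\delta/\lambda)+2\delta$; nonnegativity follows from the lower bound $\lambda x\le\varphi(x)$, which holds because concavity of $\varphi$ with $\varphi(0)=0$ forces $\varphi'_-(x)\ge\lambda$ and hence $\varphi(x)\ge\lambda x$. This establishes condition $(i)$.

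\smallskip
\noindent\textbf{Reverse direction (either form implies hyperbolic).}
For the converse I would show each form produces a Gromov hyperbolic transformed space. If $\varphi$ satisfies $(i)$, then $|\varphi(x)-\lambda x|=f(x)\le k$ for the bound $k$ on $f$, so $\varphi$ is a $(\lambda,k)$-approximate dilation. Since the Euclidean half line $([0,\infty),|\cdot|)$ is $0$-hyperbolic, Remark~\ref{rem:appr-dilation_rough-similar} makes $([0,\infty),|\cdot|)$ and $([0,\infty),|\cdot|_\varphi)$ roughly similar, and Proposition~\ref{prop:rough-sim_hyp} transfers Gromov hyperbolicity to the transform. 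If instead $\varphi$ satisfies $(ii)$, say $\varphi(2x)-\varphi(x)\le\delta$ for all $x$, then since $\varphi$ is nondecreasing (so $0$-nondecreasing), Proposition~\ref{prop:appr_log-like} applied with $\eta=0$ shows $([0,\infty),|\cdot|_\varphi)$ is $\delta$-ultrametric, and Proposition~\ref{prop:ultra-delta} then gives that it is $\delta$-hyperbolic.

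\smallskip
\noindent\textbf{Main obstacle.}
The genuinely delicate part of this theorem is not the bookkeeping here but the analytic work already accomplished in Corollaries~\ref{cor:phi_hyp_0} and~\ref{cor:phi_hyp_>0}, which route through Proposition~\ref{prop:w_hat} and its implicit-function limit argument. Granting those, the remaining risk is purely in verifying the regularity claims on $f$ in case $(i)$ and in confirming that the $\lambda=0$ versus $\lambda>0$ dichotomy from Remark~\ref{rem:phi'_lim} really is exhaustive, so that no Gromov hyperbolic $\varphi\in\mathcal{C}$ escapes both conditions; I expect no difficulty there since $\lambda=\lim_{x\to\infty}\varphi'_-(x)$ always exists and is $\ge 0$ by that remark.
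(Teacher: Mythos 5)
Your proposal is correct and follows essentially the same route as the paper's own proof: the forward direction cites Corollary~\ref{cor:phi_hyp_0} for $\lambda=0$ and Corollary~\ref{cor:phi_hyp_>0} together with Remark~\ref{rem:psi} for $\lambda>0$, and the converse uses the approximate-dilation/rough-similarity argument in case $(i)$ and Propositions~\ref{prop:appr_log-like} and~\ref{prop:ultra-delta} in case $(ii)$. The extra details you supply (nonnegativity of $f$ from $\varphi'_-\ge\lambda$, applying Proposition~\ref{prop:appr_log-like} with $\eta=0$) are accurate and consistent with the paper.
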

\begin{proof}
Let $\varphi\in\mathcal{C}$ and let $\lambda =\lim_{x\to\infty}\varphi'_-(x)$. If $([0,\infty),|\cdot|_\varphi)$ is $\delta$-hyperbolic for some $\delta\ge0$, then the conclusion follows from Corollary~\ref{cor:phi_hyp_>0} and Remark~\ref{rem:psi} if $\lambda >0,$ and from Corollary~\ref{cor:phi_hyp_0} if $\lambda =0$. Conversely, if $\varphi$ has form $(i)$ then $\varphi$ is an approximate dilation and since the Euclidean half line is $0$-hyperbolic, the transformed space $([0,\infty),|\cdot|_\varphi)$ is Gromov hyperbolic by Proposition~\ref{prop:appr_dilation}. If $\varphi$ is of form $(ii)$ then $\varphi$ is logarithm-like and $([0,\infty),|\cdot|_\varphi)$ is approximately ultrametric by Proposition~\ref{prop:appr_log-like} and therefore Gromov hyperbolic by Proposition~\ref{prop:ultra-delta}.
\end{proof}


\section{Approximately Nondecreasing Metric Transforms of the Euclidean Half Line}\label{sec:app_conc_mt}

In this section we extend Theorem~\ref{thm:conc_mainA} to the more general class of approximately nondecreasing metric transforms (Theorem~\ref{thm:appr_nondecr_mainA}).

Recall $\mathcal{M}$ is  the class of all metric transforms.  
Observe that since for all $0\le s\le t$ the triplets $(\frac{t+s}{2},\frac{t-s}{2},s)$ and $(\frac{t+s}{2},\frac{t-s}{2},t)$ are triangle triplets, any $\varphi\in \mathcal{M}$ satisfies the inequality
$$\left|\varphi\left(\tfrac{t+s}{2}\right)-\varphi\left(\tfrac{t-s}{2}\right)\right|\le\tfrac{1}{2}\varphi(t)+\tfrac{1}{2}\varphi(s)\le \varphi\left(\tfrac{t+s}{2}\right)+\varphi\left(\tfrac{t-s}{2}\right).$$

Our next proposition shows that the requirement that the transformed Euclidean half line \hbox{$([0,\infty),|\cdot|_\varphi)$} is Gromov hyperbolic imposes additional conditions on the metric transform $\varphi$.

\begin{proposition}\label{prop:delta-mt_ineq}
Let $\delta\ge 0$. If $\varphi\in\mathcal{M}$ is such that $([0,\infty),|\cdot|_\varphi)$ is $\delta$-hyperbolic
then for all $0\le s\le t$, $$\left|\varphi\left(\tfrac{t+s}{2}\right)-\varphi\left(\tfrac{t-s}{2}\right)\right|\le\tfrac{1}{2}\varphi(t)+\tfrac{1}{2}\varphi(s)\le \max\left\{\varphi\left(\tfrac{t+s}{2}\right),\varphi\left(\tfrac{t-s}{2}\right)\right\}+\delta.$$
\end{proposition}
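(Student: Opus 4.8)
The left-hand inequality needs no hyperbolicity at all: it is precisely the estimate displayed just above the statement, valid for every $\varphi\in\mathcal{M}$ by virtue of the fact that $(\tfrac{t+s}{2},\tfrac{t-s}{2},s)$ and $(\tfrac{t+s}{2},\tfrac{t-s}{2},t)$ are triangle triplets. So the plan is to concentrate entirely on the right-hand inequality
$$\tfrac{1}{2}\varphi(t)+\tfrac{1}{2}\varphi(s)\le\max\left\{\varphi\left(\tfrac{t+s}{2}\right),\varphi\left(\tfrac{t-s}{2}\right)\right\}+\delta,$$
and to extract it from $\delta$-hyperbolicity through the four-point reformulation in Proposition~\ref{prop:4points}.

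The crux is a judicious choice of four points on the half line. I would test the four-point inequality on $0,\ \tfrac{t-s}{2},\ \tfrac{t+s}{2},\ t$, which are in nondecreasing order since $0\le s\le t$. Computing the six pairwise $d_\varphi$-distances, the three ways of splitting these points into two pairs produce the sums $\varphi(t)+\varphi(s)$ (outer pair with inner pair), $2\varphi(\tfrac{t+s}{2})$ (the two long chords), and $2\varphi(\tfrac{t-s}{2})$ (the two short chords); the symmetry of the configuration is exactly what makes the latter two pairings collapse into clean doubles. Applying Proposition~\ref{prop:4points} with $x=0$, $y=t$, $z=\tfrac{t-s}{2}$, $w=\tfrac{t+s}{2}$ then bounds the first sum by the maximum of the other two plus $2\delta$:
$$\varphi(t)+\varphi(s)\le\max\left\{2\varphi\left(\tfrac{t-s}{2}\right),\,2\varphi\left(\tfrac{t+s}{2}\right)\right\}+2\delta=2\max\left\{\varphi\left(\tfrac{t-s}{2}\right),\varphi\left(\tfrac{t+s}{2}\right)\right\}+2\delta.$$
Dividing by $2$ yields the right-hand inequality.

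There is no analytic obstacle here: once the four points are fixed, the argument is pure bookkeeping of distances, and the entire difficulty is recognizing that the configuration $0,\tfrac{t-s}{2},\tfrac{t+s}{2},t$ is the one whose symmetric gap structure forces two of the three four-point pairings to reduce to $2\varphi(\tfrac{t\pm s}{2})$. I expect this choice of test points to be the only nontrivial step; everything else is immediate from Proposition~\ref{prop:4points}.
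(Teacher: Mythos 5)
Your proof is correct and is essentially the paper's own argument: the paper also applies the four-point inequality of Proposition~\ref{prop:4points} to the same configuration of points (it writes them as $0, x, y, x+y$ with $t=x+y$, $s=|x-y|$, which is exactly your $0, \tfrac{t-s}{2}, \tfrac{t+s}{2}, t$ after relabeling), obtaining $\varphi(s)+\varphi(t)\le 2\max\{\varphi(\tfrac{t+s}{2}),\varphi(\tfrac{t-s}{2})\}+2\delta$ and dividing by $2$. Your handling of the left-hand inequality via the triangle-triplet observation preceding the statement likewise matches the paper.
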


\begin{proof}
Let $\delta\ge0 $ and let $\varphi\in \mathcal{M}$ be such that  $([0,\infty),|\cdot|_\varphi)$ is $\delta$-hyperbolic. The four point condition for the transformed metric $|\cdot|_\varphi$ (see Proposition~\ref{prop:4points}), implies that $\varphi$ satisfies the following inequality  $$\varphi(|x-y|)+\varphi(|z-w|)\le \max\{\varphi(|x-z|)+\varphi(|y-w|),\varphi(|x-w|)+\varphi(|y-z|)\}+2\delta$$ for all $x,y,z,w\ge 0$.
Taking $w=0$ and $z=x+y$ yields  $$\varphi(|x-y|)+\varphi(x+y)\le \max\{2\varphi(y),2\varphi(x)\}+2\delta,$$ and by letting $t=x+y$ and $s=|x-y|$, we have that $0\le s\le t$ and
\begin{equation*}
\varphi(s)+\varphi(t)\le2\max\left\{\varphi\left(\tfrac{t+s}{2}\right),\varphi\left(\tfrac{t-s}{2}\right)\right\}+2\delta.\qedhere
\end{equation*}
\end{proof}

The following proposition shows that approximately nondecreasing metric transforms $\varphi\in\mathcal{M}$ for which $([0,\infty),|\cdot|_\varphi)$ is Gromov hyperbolic are approximately midpoint-concave and therefore, by Corollary~\ref{cor:appr-midconc_conc}, approximately concave. 

Recall that a function $\varphi\colon [0,\infty)\to \IR$ is approximately nondecreasing if there exists $\eta\ge0$ such that  $\varphi(t)\le \varphi(s)+\eta$ whenever $0\le t\le s$. 
The function $\varphi$ is approximately midpoint-concave if there exists $\delta\ge0$ such that $\frac{1}{2}\varphi(t)+\frac{1}{2}\varphi(s)\le \varphi\left(\frac{t+s}{2}\right)+\delta$ for all $t,s\ge 0$.

\begin{proposition}\label{prop:incr_hyp_mt}
Assume $\varphi\in\mathcal{M}$ is a $\eta$-nondecreasing 
metric transform such that $([0,\infty),|\cdot|_\varphi)$ is $\delta$-hyperbolic. Then there exists a continuous concave metric transform $\psi\in \mathcal{C}$ such that  $|\varphi-\psi|\le \eta+2\delta$. 
\end{proposition}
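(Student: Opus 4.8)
The plan is to combine the midpoint inequality from Proposition~\ref{prop:delta-mt_ineq} with the $\eta$-nondecreasing hypothesis to show that $\varphi$ is approximately midpoint-concave, and then invoke Corollary~\ref{cor:appr-midconc_conc} to produce the desired concave approximant $\psi$. The work therefore splits into two parts: first, extracting an approximate midpoint-concavity estimate for $\varphi$ with an explicit constant; second, checking that the $\psi$ supplied by Corollary~\ref{cor:appr-midconc_conc} actually lands in the class $\mathcal{C}$ and that the uniform bound comes out to be $\eta + 2\delta$.

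For the first part, I would start from the conclusion of Proposition~\ref{prop:delta-mt_ineq}, namely that for all $0 \le s \le t$,
\[
\tfrac{1}{2}\varphi(t)+\tfrac{1}{2}\varphi(s)\le \max\left\{\varphi\left(\tfrac{t+s}{2}\right),\varphi\left(\tfrac{t-s}{2}\right)\right\}+\delta.
\]
The right-hand side features a maximum rather than the value $\varphi\!\left(\frac{t+s}{2}\right)$ that appears in the definition of midpoint-concavity, so the key move is to control the term $\varphi\!\left(\frac{t-s}{2}\right)$ in the case where the maximum is realized by it. Since $\frac{t-s}{2}\le \frac{t+s}{2}$ and $\varphi$ is $\eta$-nondecreasing, we have $\varphi\!\left(\frac{t-s}{2}\right)\le \varphi\!\left(\frac{t+s}{2}\right)+\eta$, and hence in either case
\[
\max\left\{\varphi\left(\tfrac{t+s}{2}\right),\varphi\left(\tfrac{t-s}{2}\right)\right\}\le \varphi\left(\tfrac{t+s}{2}\right)+\eta.
\]
Substituting this back yields $\frac{1}{2}\varphi(t)+\frac{1}{2}\varphi(s)\le \varphi\!\left(\frac{t+s}{2}\right)+\eta+\delta$ for all $0\le s\le t$, and by the symmetry of the midpoint in $s$ and $t$ this holds for all $s,t\ge 0$. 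Thus $\varphi$ is $(\eta+\delta)$-midpoint-concave.

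For the second part, I would apply Corollary~\ref{cor:appr-midconc_conc} to $\varphi$, which is $(\eta+\delta)$-midpoint-concave and satisfies $\varphi(0)=0$ (recall $\varphi^{-1}(0)=\{0\}$ for any metric transform). The corollary produces a continuous concave function $\psi\colon[0,\infty)\to[0,\infty)$ with $\psi(0)=0$ and, tracing its proof, $|\varphi-\psi|\le 2(\eta+\delta)$. To finish I must upgrade $\psi$ to membership in $\mathcal{C}$, i.e.\ verify that $\psi$ is unbounded; this follows because $\varphi$ is unbounded (being within bounded distance of the unbounded $\psi$ would otherwise be contradictory, but here one argues directly that $\psi \ge \varphi - 2(\eta+\delta)$ forces $\psi$ unbounded since $\varphi$ is unbounded), and an unbounded concave function in $\mathcal{C}$ is automatically strictly increasing with $\lim_{t\to 0^+}\psi(t)=\psi(0)=0$. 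Finally, $\psi$ being a nondecreasing, subadditive (by concavity) function with $\psi^{-1}(0)=\{0\}$ is a metric transform by Proposition~\ref{prop:subadd+inc=mt}.

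The main obstacle, and the place where the stated bound must be matched, is reconciling the constant $2(\eta+\delta)$ that the two-step approximation naturally yields with the sharper constant $\eta+2\delta$ claimed in the proposition. I expect the resolution to be a more careful bookkeeping argument: rather than applying the generic Corollary~\ref{cor:appr-midconc_conc} as a black box, one should track the constants through Propositions~\ref{thm:e-midconc_2e-conc} and~\ref{thm:e-conc_conc} directly, or exploit the $\eta$-nondecreasing structure to replace $\varphi$ by its nondecreasing envelope $\varphi^+$ (which satisfies $0\le \varphi^+-\varphi\le \eta$) before approximating, thereby absorbing one factor of $\eta$ more efficiently. Getting the bookkeeping to produce exactly $\eta+2\delta$ is the delicate point; the existence of \emph{some} uniform bound, by contrast, is immediate from the outline above.
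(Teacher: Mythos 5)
Your proposal follows essentially the same route as the paper's proof: combine Proposition~\ref{prop:delta-mt_ineq} with the $\eta$-nondecreasing hypothesis to obtain approximate midpoint-concavity of $\varphi$, then invoke Corollary~\ref{cor:appr-midconc_conc}, and finally check that the resulting $\psi$ is unbounded, nondecreasing and subadditive, hence a metric transform in $\mathcal{C}$ (note that unboundedness of $\varphi$, which you use here, is needed but is only made explicit in the paper where the proposition is applied). Concerning the constant you single out as the delicate point: your computation showing that $\varphi$ is $(\eta+\delta)$-midpoint-concave, and hence that $|\varphi-\psi|\le 2\eta+2\delta$, is the correct output of this argument. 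The paper's proof asserts at the corresponding step that $\varphi$ is $\left(\tfrac{1}{2}\eta+\delta\right)$-midpoint-concave, but the two displayed inequalities it combines, namely $\tfrac{1}{2}\varphi(t)+\tfrac{1}{2}\varphi(s)\le \max\left\{\varphi\left(\tfrac{t+s}{2}\right),\varphi\left(\tfrac{t-s}{2}\right)\right\}+\delta$ and $\max\left\{\varphi\left(\tfrac{t+s}{2}\right),\varphi\left(\tfrac{t-s}{2}\right)\right\}\le \varphi\left(\tfrac{t+s}{2}\right)+\eta$, only give $\eta+\delta$, so the stated bound $\eta+2\delta$ appears to rest on a small arithmetic slip rather than on a sharper argument you are missing. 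Since only the existence of \emph{some} uniform bound is used downstream (in Theorem~\ref{thm:appr_nondecr_mainA} and beyond), your version with the constant $2\eta+2\delta$ is entirely adequate, and no further bookkeeping is required.
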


\begin{proof}
Since $\varphi$ is $\eta$-nondecreasing, if $0\le s\le t$ then $\varphi\left(\tfrac{t-s}{2}\right)\le \varphi\left(\tfrac{t+s}{2}\right)+\eta$ and thus $$\max\left\{\varphi\left(\tfrac{t+s}{2}\right),\varphi\left(\tfrac{t-s}{2}\right)\right\}\le \varphi\left(\tfrac{t+s}{2}\right)+\eta.$$
Since $([0,\infty),|\cdot|_\varphi)$ is $\delta$-hyperbolic, by Proposition~\ref{prop:delta-mt_ineq}, $$\tfrac{1}{2}\varphi(t)+\tfrac{1}{2}\varphi(s)\le \varphi\left(\tfrac{t+s}{2}\right)+\tfrac{1}{2}\eta+\delta,$$ which shows that $\varphi$ is $\left(\tfrac{1}{2}\eta+\delta\right)$-midpoint-concave.  
The existence of a continuous concave metric transform $\psi\in\mathcal{C}$ with $|\varphi-\psi|\le\eta+2\delta$ is given by  Corollary~\ref{cor:appr-midconc_conc}.
\end{proof}

The following result shows that the characterization given by Theorem~\ref{thm:conc_mainA} extends to approximately nondecreasing metric transforms.

\begin{theorem}[Theorem A]\label{thm:appr_nondecr_mainA}
Let $\varphi$ be an approximately nondecreasing, unbounded metric transform.
The transformed metric space $([0,\infty),|\cdot|_\varphi)$ is Gromov hyperbolic if and only if
one of the following two mutually exclusive conditions holds:
\begin{itemize}
\item[$(i)$] $\varphi$ is an approximate dilation, or
\item[$(ii)$] $\varphi$ is logarithm-like. 
\end{itemize}
\end{theorem}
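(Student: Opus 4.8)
The plan is to reduce Theorem~\ref{thm:appr_nondecr_mainA} to the already-established concave case, Theorem~\ref{thm:conc_mainA}, by passing from $\varphi$ to a nearby continuous concave metric transform $\psi \in \mathcal{C}$. The entire strategy rests on the observation that the properties in play---being Gromov hyperbolic, being an approximate dilation, and being logarithm-like---are all insensitive to perturbing $\varphi$ by a bounded amount. So I would first set up the bounded-distance approximation and then transfer each equivalence across it.

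First I would prove the forward direction. Assume $\varphi$ is $\eta$-nondecreasing, unbounded, and that $([0,\infty),|\cdot|_\varphi)$ is $\delta$-hyperbolic. By Proposition~\ref{prop:incr_hyp_mt} there exists a continuous concave metric transform $\psi \in \mathcal{C}$ with $|\varphi - \psi| \le \eta + 2\delta$; call this bound $C$. I would first check that $\psi$ is genuinely unbounded (since $\varphi$ is unbounded and $|\varphi-\psi|\le C$), so that $\psi \in \mathcal{C}$ as required. Next I would transfer the hyperbolicity: because $\varphi$ and $\psi$ differ by at most $C$ pointwise, the Gromov products $(x\mid_\varphi y)$ and $(x\mid_\psi y)$ differ by at most $\tfrac{3}{2}C$, so $([0,\infty),|\cdot|_\psi)$ is $(\delta + 3C)$-hyperbolic. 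Now Theorem~\ref{thm:conc_mainA} applies to $\psi$: either $\psi(x) = \lambda x + f(x)$ with $\lambda > 0$ and $f$ bounded (case $(i)$), or $\lambda = 0$ and $\psi(2x)-\psi(x)$ is bounded (case $(ii)$). In case $(i)$, $\psi$ is an approximate dilation, and since $|\varphi - \psi| \le C$, so is $\varphi$. In case $(ii)$, $\varphi(2x)-\varphi(x) = [\varphi(2x)-\psi(2x)] + [\psi(2x)-\psi(x)] + [\psi(x)-\varphi(x)]$ is bounded, so $\varphi$ is logarithm-like.

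For the converse I would argue more directly, without needing $\psi$. If $\varphi$ is an approximate dilation (case $(i)$), then by Remark~\ref{rem:appr-dilation_rough-similar} the spaces $([0,\infty),|\cdot|)$ and $([0,\infty),|\cdot|_\varphi)$ are roughly similar, and since the Euclidean half line is $0$-hyperbolic, Proposition~\ref{prop:rough-sim_hyp} (or directly Proposition~\ref{prop:appr_dilation}) gives that $([0,\infty),|\cdot|_\varphi)$ is Gromov hyperbolic. If $\varphi$ is logarithm-like (case $(ii)$), meaning $\varphi(2t)-\varphi(t) \le \delta$ for some $\delta$, then since $\varphi$ is $\eta$-nondecreasing, Proposition~\ref{prop:appr_log-like} shows $([0,\infty),|\cdot|_\varphi)$ is $(\delta + 2\eta)$-ultrametric, hence Gromov hyperbolic by Proposition~\ref{prop:ultra-delta}.

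Finally I would address mutual exclusivity, which is the only genuinely new content beyond bookkeeping. Suppose $\varphi$ were simultaneously an approximate dilation and logarithm-like. Being a $(\lambda,k)$-approximate dilation with $\lambda > 0$ forces $\varphi(2t)-\varphi(t) \ge \lambda t - 2k \to \infty$, whereas being logarithm-like forces $\varphi(2t)-\varphi(t)$ bounded above---a contradiction. (One must also confirm that an approximate dilation has $\lambda > 0$; this is where unboundedness is used, since a bounded approximate dilation would be impossible as $\varphi$ is unbounded, forcing $\lambda > 0$.) I expect the main subtlety to be the transfer of hyperbolicity across the bounded perturbation: I must verify the Gromov-product estimate carefully and confirm that the approximating $\psi$ really lies in $\mathcal{C}$, in particular that it is unbounded so that Theorem~\ref{thm:conc_mainA} is applicable. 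Everything else is a routine chaining of the cited propositions.
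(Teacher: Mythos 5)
Your proposal is correct and follows essentially the same route as the paper: apply Proposition~\ref{prop:incr_hyp_mt} to replace $\varphi$ by a continuous concave $\psi\in\mathcal{C}$ within bounded distance, transfer Gromov hyperbolicity to $([0,\infty),|\cdot|_\psi)$, invoke Theorem~\ref{thm:conc_mainA}, and pull the dichotomy back to $\varphi$, with the converse handled by Propositions~\ref{prop:appr_dilation} and~\ref{prop:appr_log-like}. Your extra care with the unboundedness of $\psi$, the explicit Gromov-product estimate, and the mutual-exclusivity argument only makes explicit what the paper leaves implicit.
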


\begin{proof} Fix $\eta, \delta\ge 0$.
Assume $\varphi\in \mathcal{M}$ is an unbounded $\eta$-nondecreasing metric transform such that $([0,\infty),|\cdot|_\varphi)$ is $\delta$-hyperbolic. By Proposition~\ref{prop:incr_hyp_mt}, there exists a continuous concave metric transform $\psi\in \mathcal{C}$ such that $|\varphi(t)-\psi(t)|\le \eta+2\delta$ for all $t\ge 0$. Notice that $\psi$ is an unbounded continuous concave metric transform and the transformed Euclidean half line $([0,\infty),|\cdot|_\psi)$ is $(2\eta+6\delta)$-hyperbolic. \linebreak 
By Theorem~\ref{thm:conc_mainA}, $\psi$ is either an approximate dilation or a logarithm-like metric transform. Since $\varphi$ is within bounded distance from $\psi$, the conclusion follows. 
\end{proof}


\section{Proof of Theorem B}\label{sec:proof_thmB}

In this section we prove Theorem~\ref{thm:B} (Theorem~\ref{appr_nondecr_mainB}) and its corollary (Corollary~\ref{cor:rigidity}) as stated in the introduction.

 Recall that a {\it rough isometric embedding} between two metric spaces $X$ and $Y$ is given by a map $f \colon X\to Y$ and a constant $k\ge0$ such that for all $x,y\in X$
 \[d_X(x,y)-k\le d_Y(f(x),f(y))\le d_X(x,y)+k.\]
\begin{lemma}\label{lem:k-rough_iso}
Assume that $f\colon X\to Y$ is a $k$-rough isometric embedding. If $Y$ is $\delta$-hyperbolic then $X$ is $(\delta+2k)$-hyperbolic.   
\end{lemma}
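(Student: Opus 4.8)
The plan is to prove the statement directly from the four-point characterization of $\delta$-hyperbolicity (Proposition~\ref{prop:4points}), transferring the inequality from $Y$ back to $X$ by paying a controlled error for each rough-isometry distortion. First I would fix four arbitrary points $x_1,x_2,x_3,x_4 \in X$ and set $a_{ij} = d_X(x_i,x_j)$ and $b_{ij} = d_Y(f(x_i),f(x_j))$. The rough isometric embedding hypothesis gives $|a_{ij} - b_{ij}| \le k$ for every pair, which is the only quantitative input I need.

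The main step is to start from the four-point inequality that holds in $Y$, namely
\[
b_{12} + b_{34} \le \max\{b_{13}+b_{24},\; b_{14}+b_{23}\} + 2\delta,
\]
and replace each $b_{ij}$ by $a_{ij}$ at the cost of $k$. On the left I would use $b_{12} \ge a_{12}-k$ and $b_{34}\ge a_{34}-k$ to get $a_{12}+a_{34} - 2k \le b_{12}+b_{34}$. On the right, inside each of the two sums I would use $b_{ij}\le a_{ij}+k$, so each sum grows by at most $2k$, and since the maximum of two quantities each bounded above by $a_{\cdot}+a_{\cdot}+2k$ is bounded by $\max\{a_{13}+a_{24},\,a_{14}+a_{23}\}+2k$, I obtain
\[
a_{12}+a_{34} \le \max\{a_{13}+a_{24},\; a_{14}+a_{23}\} + 2\delta + 4k.
\]
This is exactly the four-point inequality for $X$ with hyperbolicity constant $\delta' = \delta + 2k$, so applying Proposition~\ref{prop:4points} in the reverse direction concludes that $X$ is $(\delta+2k)$-hyperbolic.

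I do not expect any genuine obstacle here: the whole argument is bookkeeping of additive error terms, and the symmetric indexing $a_{ij}=a_{ji}$ means I never have to worry about orientation. The one point requiring a little care is the accounting on the right-hand side, where I must make sure that pushing each of the two candidate sums up by $2k$ and then taking the maximum really only costs a single $2k$ (not $4k$), which follows because the maximum of the perturbed sums is bounded by the maximum of the original sums plus the common shift $2k$. Combining this with the $2k$ from the left-hand side yields the total error $4k$, giving the clean constant $\delta + 2k$ claimed in the lemma.
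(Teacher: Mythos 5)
Your proposal is correct and follows essentially the same route as the paper: both apply the four-point characterization (Proposition~\ref{prop:4points}), paying an error of $2k$ on the left-hand sum and $2k$ inside the maximum on the right, for a total additive error of $4k = 2(\delta+2k)-2\delta$. The only difference is cosmetic (you transfer the inequality from $Y$ to $X$, while the paper writes the same chain starting from $d_X(x,y)+d_X(z,w)$).
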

\begin{proof}
 We use the $4$-point inequality in Proposition~\ref{prop:4points}.  Let $x,y,z,w\in X$. Then
\begin{align*}
d_X(x &,y) + d_X(z,w) \le d_Y(f(x),f(y)) + d_Y(f(z),f(w)) + 2k\\
&\le\max\{d_Y(f(x),f(z)) + d_Y(f(y),f(w)), d_Y(f(x),f(w)) + d_Y(f(y),f(z))\}+2\delta+2k\\
&\le \max\{d_X(x,z)+d_X(y,w)+2k, d_X(x,w)+d_X(y,z)+2k\} +2\delta+2k\\
&=\max\{d_X(x,z)+d_X(y,w), d_X(x,w)+d_X(y,z)\} +2(\delta+2k).
\end{align*}
which shows that $(X,d_X)$ is $(\delta+2k)$-hyperbolic.
\end{proof}

\begin{theorem}\label{thm:main1}
Let $\varphi\in\mathcal{M}$ be an unbounded, approximately nondecreasing metric transform. The transformed Euclidean half line $([0,\infty), |\cdot|_\varphi)$ can be roughly isometrically embedded in a Gromov hyperbolic space $(X,d)$ if and only if $\varphi$ is of one of the following forms:
\begin{itemize}
\item[(i)] $\varphi$ is an approximate dilation,
\item[(ii)] $\varphi$ is logarithm-like.
\end{itemize}
\end{theorem}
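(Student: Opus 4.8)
The plan is to reduce this embedding statement to the already-proved characterization on the half line (Theorem~\ref{thm:appr_nondecr_mainA}), using the rough-isometry transfer lemma just established. The forward direction is the substantive one. Suppose $f\colon ([0,\infty),|\cdot|_\varphi) \to (X,d)$ is a $k$-rough isometric embedding into a $\delta$-hyperbolic space $(X,d)$, for some $k\ge 0$ and $\delta\ge 0$. The key observation is that Gromov hyperbolicity passes to subspaces with the same constant, so the image $f([0,\infty))$, equipped with the restriction of $d$, is itself $\delta$-hyperbolic; and since $f$ is a $k$-rough isometric embedding onto this subspace, Lemma~\ref{lem:k-rough_iso} applies directly to give that the domain $([0,\infty),|\cdot|_\varphi)$ is $(\delta+2k)$-hyperbolic. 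Thus the transformed half line is itself Gromov hyperbolic, and Theorem~\ref{thm:appr_nondecr_mainA} immediately forces $\varphi$ to be either an approximate dilation or logarithm-like. I would be careful to invoke Lemma~\ref{lem:k-rough_iso} with $Y = f([0,\infty))$ rather than all of $X$, since the lemma's hypothesis is stated for a rough isometric embedding whose target is $\delta$-hyperbolic; restricting to the image makes $f$ literally such an embedding.

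The converse direction is even shorter. Assume $\varphi$ is of form (i) or (ii). By Theorem~\ref{thm:appr_nondecr_mainA} (in the ``if'' direction, which is already proved), the transformed Euclidean half line $([0,\infty),|\cdot|_\varphi)$ is itself Gromov hyperbolic. It therefore suffices to exhibit a rough isometric embedding of this space into \emph{some} Gromov hyperbolic space, and the tautological choice works: take $X = [0,\infty)$ with $d = |\cdot|_\varphi$ and $f = \id$, which is a $0$-rough isometric embedding of the space into itself. Hence $([0,\infty),|\cdot|_\varphi)$ can be roughly isometrically embedded in a Gromov hyperbolic space, as required.

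I do not expect a genuine obstacle here, since all the analytic work has been absorbed into Theorem~\ref{thm:appr_nondecr_mainA} and Lemma~\ref{lem:k-rough_iso}. The only point demanding a little care is the transfer step: one must note that $\delta$-hyperbolicity is inherited by the subspace $f([0,\infty))$ (which is immediate from Definition~\ref{defn:delta_hyp}, as the defining inequality is quantified over points and so restricts freely to any subset), so that Lemma~\ref{lem:k-rough_iso} can legitimately be applied with target $f([0,\infty))$. One might also remark, for completeness, that the two conditions (i) and (ii) remain mutually exclusive, as already recorded in Theorem~\ref{thm:appr_nondecr_mainA}, so the characterization is genuinely a dichotomy.
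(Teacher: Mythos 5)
Your proposal is correct and follows essentially the same route as the paper: apply Lemma~\ref{lem:k-rough_iso} to conclude that $([0,\infty),|\cdot|_\varphi)$ is itself Gromov hyperbolic, then invoke Theorem~\ref{thm:appr_nondecr_mainA}. Your extra care about restricting the target to $f([0,\infty))$ is harmless but not needed, since Lemma~\ref{lem:k-rough_iso} as stated (and proved via the four-point inequality) applies directly with $Y$ the ambient hyperbolic space.
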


\begin{proof}
If $([0,\infty), |\cdot|_\varphi)$ admits a rough isometric embedding into a Gromov hyperbolic space then by Lemma~\ref{lem:k-rough_iso} it is Gromov hyperbolic and the conclusion follows from Theorem~\ref{thm:appr_nondecr_mainA}.
\end{proof}

Recall that a {\it rough geodesic ray} in a metric space $(X,d)$ is a rough isometric embedding of the Euclidean half line $[0,\infty)$ into $X$.

\begin{lemma}\label{lem:appr_nondecr_k-rough_ray}
Let $\gamma\colon[0,\infty)\to (X,d)$ be a $k$-rough geodesic ray and let $\varphi\in\mathcal{M}$ be a $\eta$-nondecreasing, unbounded metric transform. Then $\gamma\colon([0,\infty),|\cdot|_\varphi)\to (X,d_\varphi)$ is a $(\varphi(k)+\eta)$-rough isometric \hbox{embedding.}
\end{lemma}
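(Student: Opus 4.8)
The plan is to reduce the two rough-isometry inequalities for $\gamma$ to a single estimate on $\varphi$ that follows immediately from the basic properties of metric transforms recorded earlier. First I would fix $t,s\ge 0$ and abbreviate $a=|t-s|$ and $b=d(\gamma(t),\gamma(s))$. The hypothesis that $\gamma$ is a $k$-rough geodesic ray says precisely that $a-k\le b\le a+k$, that is, $|a-b|\le k$. Since the transformed distances are $|t-s|_\varphi=\varphi(a)$ and $d_\varphi(\gamma(t),\gamma(s))=\varphi(b)$, the desired conclusion is exactly the two-sided bound $|\varphi(a)-\varphi(b)|\le\varphi(k)+\eta$, which unwinds to $\gamma$ being a $(\varphi(k)+\eta)$-rough isometric embedding between the transformed spaces.

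The main tool is part $(ii)$ of Proposition~\ref{prop:mt-properties}, the reverse-triangle estimate $|\varphi(a)-\varphi(b)|\le\varphi(|a-b|)$ valid for every metric transform. Combining this with $|a-b|\le k$ reduces the problem to bounding $\varphi(|a-b|)$ by $\varphi(k)+\eta$. Here I would invoke the hypothesis that $\varphi$ is $\eta$-nondecreasing: from $|a-b|\le k$ the definition gives $\varphi(|a-b|)\le\varphi(k)+\eta$. Chaining the two inequalities yields $|\varphi(a)-\varphi(b)|\le\varphi(k)+\eta$, and unpacking the absolute value produces the two required rough-isometry bounds, completing the argument.

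There is essentially no serious obstacle; the content lies entirely in recognizing that the two recorded properties of $\varphi$—the metric-transform reverse triangle inequality and the $\eta$-nondecreasing hypothesis—together control $\varphi$ across differences of size at most $k$. The only point demanding a moment's care is that $\varphi$ is \emph{not} assumed monotone, so one cannot directly replace $\varphi(|a-b|)$ by $\varphi(k)$; the slack $\eta$ supplied by the approximately nondecreasing assumption is exactly what accounts for the constant being $\varphi(k)+\eta$ rather than $\varphi(k)$.
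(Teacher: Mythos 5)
Your proof is correct and rests on essentially the same ingredients as the paper's argument (the subadditivity of $\varphi$, packaged as Proposition~\ref{prop:mt-properties}(ii), combined with the $\eta$-nondecreasing hypothesis), and it arrives at exactly the same constant $\varphi(k)+\eta$. If anything your ordering is marginally cleaner: the paper first applies $\varphi$ to the sandwich $|t-s|-k\le d(\gamma(t),\gamma(s))\le|t-s|+k$ and then invokes subadditivity, which involves evaluating $\varphi$ at $|t-s|-k$ (a quantity that can be negative when $|t-s|<k$ and so strictly needs a separate word), whereas your one-step use of $|\varphi(a)-\varphi(b)|\le\varphi(|a-b|)$ with $|a-b|\le k$ sidesteps this entirely.
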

\begin{proof}
Since $\gamma\colon [0,\infty)\to(X,d)$ is a $k$-rough geodesic ray for all $t,s\ge 0$  \[|t-s|-k\le d(\gamma(t),\gamma(s))\le |t-s|+k\] and since $\varphi$ is $\eta$-nondecreasing \[ \varphi(|t-s|-k)\le \varphi(d(\gamma(t),\gamma(s)))+\eta\le \varphi(|t-s|+k)+2\eta.\] Since $\varphi$ is subadditive \[\varphi(|t-s|)-\varphi(k)\le  \varphi(d(\gamma(t),\gamma(s)))+\eta\le \varphi(|t-s|)+\varphi(k)+2\eta\] or equivalently $|t-s|_\varphi-\varphi(k)-\eta\le d_\varphi(\gamma(t),\gamma(s))\le|t-s|_\varphi+\varphi(k)+\eta$.
\end{proof}

\begin{theorem}[Theorem B]\label{appr_nondecr_mainB}
Let $(X,d)$ be a metric space containing a rough geodesic ray. Let $\varphi$ be an approximately nondecreasing, unbounded metric transform. If the transformed space $(X,d_\varphi)$ is Gromov hyperbolic then
\begin{itemize}
\item[$(i)$] $(X,d)$ is Gromov hyperbolic and $\varphi$ is an approximate dilation, or
\item[$(ii)$] $(X,d_\varphi)$ is approximately ultrametric. \\
Conditions (i) and (ii) are mutually exclusive. 
\end{itemize}
\end{theorem}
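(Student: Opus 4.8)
The plan is to use the rough geodesic ray to transport the classification of Theorem~\ref{thm:appr_nondecr_mainA} from $(X,d_\varphi)$ onto the transform $\varphi$ itself, and then read off the two alternatives. First I would fix $\eta\ge 0$ with $\varphi$ being $\eta$-nondecreasing and let $\gamma\colon[0,\infty)\to(X,d)$ be the given $k$-rough geodesic ray. By Lemma~\ref{lem:appr_nondecr_k-rough_ray}, $\gamma$ is a $(\varphi(k)+\eta)$-rough isometric embedding of $([0,\infty),|\cdot|_\varphi)$ into $(X,d_\varphi)$. Since $(X,d_\varphi)$ is assumed Gromov hyperbolic, Theorem~\ref{thm:main1} applies directly (equivalently, Lemma~\ref{lem:k-rough_iso} makes $([0,\infty),|\cdot|_\varphi)$ Gromov hyperbolic, and then Theorem~\ref{thm:appr_nondecr_mainA} applies), forcing $\varphi$ to be either an approximate dilation or logarithm-like. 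I also note that $(X,d)$ is unbounded, since $d(\gamma(0),\gamma(t))\ge t-k\to\infty$.

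Next I would split according to this dichotomy. If $\varphi$ is an approximate dilation, then by Remark~\ref{rem:appr-dilation_rough-similar} the spaces $(X,d)$ and $(X,d_\varphi)$ are roughly similar, so Proposition~\ref{prop:rough-sim_hyp} together with the hypothesis that $(X,d_\varphi)$ is Gromov hyperbolic yields that $(X,d)$ is Gromov hyperbolic; this is conclusion $(i)$. If instead $\varphi$ is logarithm-like, then $\varphi(2t)-\varphi(t)$ is bounded, and since $\varphi$ is $\eta$-nondecreasing, Proposition~\ref{prop:appr_log-like} shows that $(X,d_\varphi)$ is approximately ultrametric; this is conclusion $(ii)$. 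Thus one of $(i)$, $(ii)$ always holds.

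The remaining point, which I expect to be the main obstacle, is the mutual exclusivity of $(i)$ and $(ii)$, since these are phrased asymmetrically and the genuine interaction of the hypotheses lives here. The idea is that an approximate dilation cannot turn a space containing an unbounded rough geodesic ray into an approximately ultrametric one. Concretely, suppose both held, with $\varphi$ a $(\lambda,c)$-approximate dilation and $(X,d_\varphi)$ being $\delta$-ultrametric. Evaluating on the triple $\gamma(0),\gamma(n),\gamma(2n)$ and combining the $k$-rough geodesic bounds for $\gamma$ with the estimate $|\varphi(t)-\lambda t|\le c$, I would show that $d_\varphi(\gamma(0),\gamma(2n))$ equals $2\lambda n$ up to a bounded additive error, while $d_\varphi(\gamma(0),\gamma(n))$ and $d_\varphi(\gamma(n),\gamma(2n))$ each equal $\lambda n$ up to a bounded additive error. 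Hence the largest of the three transformed distances exceeds the median by roughly $\lambda n$, which is unbounded as $n\to\infty$; this contradicts the $\delta$-ultrametric condition, whose equivalent form is exactly that largest minus medium is at most $\delta$. The force of the argument is that both the unboundedness and the linear growth of distances along the ray are essential: the ultrametric bound only controls largest-minus-medium, and it is precisely the rough-geodesic growth—preserved up to bounded error by an approximate dilation—that defeats it. Therefore $(i)$ and $(ii)$ cannot hold simultaneously.
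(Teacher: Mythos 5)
Your proposal is correct and follows essentially the same route as the paper: transport hyperbolicity to $([0,\infty),|\cdot|_\varphi)$ via Lemma~\ref{lem:appr_nondecr_k-rough_ray} and Lemma~\ref{lem:k-rough_iso}, invoke Theorem~\ref{thm:appr_nondecr_mainA} for the dichotomy, and then use Remark~\ref{rem:appr-dilation_rough-similar} with Proposition~\ref{prop:rough-sim_hyp} in case $(i)$ and Proposition~\ref{prop:appr_log-like} in case $(ii)$. For mutual exclusivity the paper transfers approximate ultrametricity back to $(X,d)$ through the approximate dilation and then contradicts the existence of the ray, while you test the triple $\gamma(0),\gamma(n),\gamma(2n)$ directly in $(X,d_\varphi)$; these are the same argument up to where the bounded-error bookkeeping is done.
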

\begin{proof}
Let $\gamma\colon[0,\infty)\to X$ be a rough geodesic ray in $(X,d)$.
Then by Lemma~\ref{lem:appr_nondecr_k-rough_ray}, the map 
\linebreak 
$\gamma\colon ([0,\infty),|\cdot|_\varphi)\to (X,d_\varphi)$ is a rough isometric embedding, and since $(X,d_\varphi)$ is Gromov hyperbolic, by Lemma~\ref{lem:k-rough_iso}, the transformed Euclidean half line $([0,\infty),|\cdot|_\varphi)$ is Gromov hyperbolic. 
By Theorem~\ref{thm:appr_nondecr_mainA}, this occurs if and only if the metric transform $\varphi$ is either an approximate dilation or logarithm-like. 
In the former case $(X,d)$ is roughly similar to $(X,d_\varphi)$ by Remark~\ref{rem:appr-dilation_rough-similar} and therefore Gromov hyperbolic by Proposition~\ref{prop:rough-sim_hyp}, and in the latter case $(X,d_\varphi)$ is approximately ultrametric by Proposition~\ref{prop:appr_log-like}.

Suppose that both (i) and (ii) both hold. 
In particular, $\varphi$ is an approximate dilation and hence, because $(X,d_\varphi)$ is approximately ultrametric,  $(X,d)$ must also be approximately ultrametric. 
This is impossible since $(X,d)$ contains a rough geodesic ray. 
\end{proof}

The following corollary of Theorem~\ref{appr_nondecr_mainB} can
be viewed as a type of  rigidity with respect to metric transformation of roughly geodesic Gromov hyperbolic spaces.

\begin{corollary}[Metric Transform Rigidity]\label{cor:rigidity}
Let $(X,d)$ be a metric space containing a rough geodesic ray.
Let $\varphi$ be an approximately nondecreasing, unbounded metric transform.
If the transformed space $(X,d_\varphi)$ is Gromov hyperbolic and roughly geodesic then $\varphi$ is an approximate dilation and $(X,d)$ is Gromov hyperbolic and roughly geodesic.
\end{corollary}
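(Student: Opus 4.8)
The plan is to deduce the corollary from Theorem~\ref{appr_nondecr_mainB} (Theorem~B), using the extra hypothesis that $(X,d_\varphi)$ is roughly geodesic to eliminate its alternative~(ii), and then transferring rough geodesicity back along the approximate dilation $\varphi$.

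First I would record that $(X,d_\varphi)$ is unbounded. Let $\gamma\colon[0,\infty)\to(X,d)$ be the given rough geodesic ray. By Lemma~\ref{lem:appr_nondecr_k-rough_ray}, the map $\gamma\colon([0,\infty),|\cdot|_\varphi)\to(X,d_\varphi)$ is a rough isometric embedding; since $\varphi$ is unbounded the source $([0,\infty),|\cdot|_\varphi)$ is unbounded, and hence so is $(X,d_\varphi)$. Next I would apply Theorem~B, which yields one of its two mutually exclusive alternatives. Alternative~(ii) asserts that $(X,d_\varphi)$ is approximately ultrametric; combined with the unboundedness just established, Proposition~\ref{prop:appr_ultra_non_geo} would then force $(X,d_\varphi)$ to fail the rough midpoint property and so not be roughly geodesic, contradicting the hypothesis. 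Therefore alternative~(i) holds, giving at once that $\varphi$ is an approximate dilation and that $(X,d)$ is Gromov hyperbolic.

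It remains to show that $(X,d)$ is roughly geodesic, and the key observation is that rough geodesicity is insensitive to rescaling the metric by a bounded amount. Writing $\varphi$ as a $(\lambda,k)$-approximate dilation, so that $\lambda d(a,b)-k\le d_\varphi(a,b)\le \lambda d(a,b)+k$ for all $a,b\in X$, I would fix $x,y\in X$, choose a $K$-rough geodesic $\sigma\colon[0,L]\to X$ from $x$ to $y$ in $(X,d_\varphi)$, and reparametrize by setting $\widetilde{\sigma}(u)=\sigma(\lambda u)$ for $u\in[0,L/\lambda]$. The rough geodesic bound for $\sigma$ gives $\lambda|u-v|-K\le d_\varphi(\widetilde{\sigma}(u),\widetilde{\sigma}(v))\le \lambda|u-v|+K$, and comparing this with the approximate dilation bound $\lambda d(\widetilde{\sigma}(u),\widetilde{\sigma}(v))\pm k$ yields $\bigl|\,d(\widetilde{\sigma}(u),\widetilde{\sigma}(v))-|u-v|\,\bigr|\le (K+k)/\lambda$. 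Since $\widetilde{\sigma}(0)=x$ and $\widetilde{\sigma}(L/\lambda)=y$, this exhibits $\widetilde{\sigma}$ as a $(K+k)/\lambda$-rough geodesic joining $x$ and $y$ in $(X,d)$, so $(X,d)$ is roughly geodesic.

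The only genuine subtlety is in this last step: one must reparametrize by the dilation constant $\lambda$ rather than by $1$, so that the two linear estimates have matching slope $\lambda$ and their additive errors combine into the single constant $(K+k)/\lambda$, uniform in $x$ and $y$. Everything else—the unboundedness of $(X,d_\varphi)$, the exclusion of alternative~(ii), and the Gromov hyperbolicity of $(X,d)$—follows immediately from Theorem~B and Proposition~\ref{prop:appr_ultra_non_geo} once the rough geodesic ray is fed through Lemma~\ref{lem:appr_nondecr_k-rough_ray}.
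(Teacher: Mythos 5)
Your proposal is correct and follows essentially the same route as the paper: invoke Theorem~\ref{appr_nondecr_mainB} (equivalently, Theorem~\ref{thm:appr_nondecr_mainA} via Lemmas~\ref{lem:appr_nondecr_k-rough_ray} and~\ref{lem:k-rough_iso}), rule out the logarithm-like/approximately ultrametric alternative using Proposition~\ref{prop:appr_ultra_non_geo}, and transfer hyperbolicity and rough geodesicity back to $(X,d)$ through the rough similarity induced by the approximate dilation. The only difference is that you spell out two points the paper leaves implicit --- the unboundedness of $(X,d_\varphi)$ needed for Proposition~\ref{prop:appr_ultra_non_geo}, and the reparametrization by $\lambda$ showing that a rough similarity preserves rough geodesicity --- both of which are correct and mildly strengthen the exposition.
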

\begin{proof}
Since $(X,d)$ contains a rough geodesic and the transformed space $(X,d_\varphi)$ is Gromov hyperbolic, it follows as in the proof of Theorem~\ref{appr_nondecr_mainB} that $([0,\infty),|\cdot|_\varphi)$ is Gromov hyperbolic and by Theorem~\ref{thm:appr_nondecr_mainA} that $\varphi$ is either an approximate dilation or a logarithm-like metric transform. However, $\varphi$ cannot be a logarithm-like since  Proposition~\ref{prop:appr_log-like} would then imply that $(X,d_\varphi)$ is approximately ultrametric and  by Proposition~\ref{prop:appr_ultra_non_geo} this would contradict the assumption that $(X,d_\varphi)$ is roughly geodesic. Thus $\varphi$ has to be an approximate dilation and by Remark~\ref{rem:appr-dilation_rough-similar} $(X,d)$ is roughly similar to $(X,d_\varphi)$, hence 
the metric space $(X,d)$ is Gromov hyperbolic and roughly geodesic. 
\end{proof}


\bibliographystyle{amsalpha}

\providecommand{\bysame}{\leavevmode\hbox to3em{\hrulefill}\thinspace}
\providecommand{\MR}{\relax\ifhmode\unskip\space\fi MR }
\providecommand{\MRhref}[2]{%
  \href{http://www.ams.org/mathscinet-getitem?mr=#1}{#2}
}
\providecommand{\href}[2]{#2}


\end{document}